\documentclass[a4paper,11pt]{article} %


\usepackage{amsmath} %
\usepackage{amsthm} %
\usepackage{amssymb} %
\usepackage{epsfig} %
\usepackage{psfrag} %
\usepackage[mathscr,mathcal]{eucal} %
\usepackage{enumerate} %
\usepackage{dsfont} %
\usepackage[margin=1in]{geometry} %
\usepackage{enumitem}
\usepackage{graphicx}
\usepackage{twoopt}
\usepackage{xcolor}
\usepackage{hyperref}
\hypersetup{colorlinks=true, citecolor=blue, urlcolor=blue, linkcolor=blue}

\usepackage{wasysym} %

  \ifx\LabelFigloaded\MYundefined\relax
  \else
    \message{ !!! labelfig.tex ALREADY loaded !!!}
   \fi

  \def\LabelFigloaded{\relax}


  \chardef\LabelFigCatAt\the\catcode`\@
  \catcode`\@=11

 \let\LabelFigwlog@ld\wlog
 \def\wlog#1{\relax}

 \ifx\\\MYundefined@
    \let\\\relax
 \fi


  \def\ms@g{\immediate\write16}

 \def\N@wif{\csname newif\endcsname }
 \def\Temp@ {\N@wif\ifIN@}
 \ifx\INN@\MYundefined@
    \else \let\Temp@\relax
 \fi
 \Temp@

  \def\IN@{\expandafter\INN@\expandafter}
  \long\def\INN@0#1@#2@{\long\def\NI@##1#1##2##3\ENDNI@
    {\ifx\m@rker##2\IN@false\else\IN@true\fi}%
     \expandafter\NI@#2@@#1\m@rker\ENDNI@}
  \def\m@rker{\m@@rker}
 
  \newtoks\Initialtoks@  \newtoks\Terminaltoks@
  \def\SPLIT@{\expandafter\SPLITT@\expandafter}
  \def\SPLITT@0#1@#2@{\def\TTILPS@##1#1##2@{%
     \Initialtoks@{##1}\Terminaltoks@{##2}}\expandafter\TTILPS@#2@}

 \def\Shifted@@#1#2#3{\setbox0=\hbox{#3}%
   \raise -\dp0\vbox {\kern-#2%
       \hbox {\kern#1\unhbox0\kern-#1}%
           \kern#2}}

 \newcount\gridcount
 \newbox\auxGridbox@ \newbox\hGridbox@ \newbox\vGridbox@
 \newbox\Labelbox@ \newbox\auxLabelbox@
 \newbox\Coordinatebox@
 \newtoks\Labeltoks@
 \newdimen\Wdd@ \newdimen\Htt@
 \newdimen\Wddd@ \newdimen\Httt@
 
 \def\Wr@{\immediate\write16}

 \newdimen\GL@wd
 \GL@wd=.02pt
 \def\GridLineWidth#1{\GL@wd=#1}

 \def\gobble#1{}
 \def\EdgeErr@{\Wr@{}%
      \Wr@{\string\Edges\space argument
      1, 10, 100 or 1000 please\string!}%
      }

 \newcount\Edgect@

 \def\Sweepup#1\endSweepup{}

 \def\SetEdges@{%
    \edef\Zr@@s{\expandafter\gobble\number\Edgect@\empty}%
        \count255=0\Zr@@s\relax
        \ifnum\count255=\z@\else\EdgeErr@\show\tailtest\fi
        \count255=1\Zr@@s\relax
        \ifnum\count255=\Edgect@\relax\else\EdgeErr@\show\leadtest\fi
    \EdgGl@b\edef\Zr@s{\expandafter\gobble\Zr@@s\empty}
    \ifnum\Edgect@>\@ne\relax\EdgGl@b\let\L@Dc\empty
        \else\EdgGl@b\edef\L@Dc{\string.}\fi
    \ifnum\Edgect@>\@ne\relax
        \EdgGl@b\edef\Edgescale@##1{\divide##1 by \Edgect@}%
        \else\EdgGl@b\edef\Edgescale@##1{}\fi
    }

 \def\Edges#1{\Edgect@=#1\relax
     \let\EdgGl@b\global \SetEdges@}

 \Edges{1}

 \def\hhrule{\hrule height \GL@wd\vskip-.\GL@wd}

 \def\hRule@{%
   \advance\gridcount -2%
   \vfil\hhrule\vfil
   \llap{\smash{\raise -2.5pt
     \hbox{\L@Dc\number\gridcount\Zr@s\kern2pt}}}%
   \hhrule
   }

\def\vvrule{\vrule width \GL@wd \kern-\GL@wd}

 \def\vRule@{\advance\gridcount 2%
   \hfil\vvrule\hfil
   \setbox\auxGridbox@=\vbox to 0pt
      {\vskip \Htt@\vskip 2pt
        \hbox to 0pt{\hss\L@Dc\number\gridcount\Zr@s\hss}\vss}%
      \wd\auxGridbox@=0pt \box\auxGridbox@
   \vvrule
   }

 \def\PlaceGrid@@{\gridcount=10 
  \setbox\hGridbox@=\hbox{%
        \hbox{%
             \hskip-.4pt\vrule
             \vbox to \Htt@{%
               \offinterlineskip\parindent=\z@\relax
               \hbox to \Wdd@{\hfil}
               \hRule@\hRule@\hRule@\hRule@
               \vfil\hhrule\vfil}%
             \vrule\hskip-.4pt}
    }%
  \gridcount=0%
  \setbox\vGridbox@=\hbox{%
      \vbox{\offinterlineskip\parindent=0pt\hsize=0pt
         \vskip-.4pt\hrule%
         \hbox to \Wdd@{%
                 \vtop to \Htt@{\vfil}%
                 \vRule@\vRule@\vRule@\vRule@
                 \hfil\vvrule\hfil}%
         \hrule\vskip-.4pt}}%
  \wd\hGridbox@=0pt\ht\hGridbox@=0pt
  \wd\vGridbox@=0pt\ht\vGridbox@=0pt
  \hbox{\box\hGridbox@\box\vGridbox@}%
  }

 \def\LabelsGlobal{\def\LabGl@b{\global}}
 \def\LabelsLocal{\def\LabGl@b{}}
 \LabelsGlobal 

 \def\SetLabels#1\endSetLabels{%
   \LabGl@b\Labeltoks@={#1()\\}%
   }

 \LabGl@b\Labeltoks@={()\\}

 \def\ShowGrid{\LabGl@b\let\PlaceGrid@\PlaceGrid@@}
 \def\HideGrid{\LabGl@b\let\PlaceGrid@\relax}
 \def\Grids{\ShowGrid\LabGl@b\let\GridSwitch@\ShowGrid}
 \def\noGrids{\HideGrid\LabGl@b\let\GridSwitch@\HideGrid}

 \noGrids

 \def\bAdjust@@{%
     \setbox\auxLabelbox@=\hbox{\raise \dp\auxLabelbox@
            \box\auxLabelbox@}}
 \def\bAdjust@{\let\vAdjust@\bAdjust@@}

 \def\eAdjust@@{\dimen0=-.5\ht\auxLabelbox@
     \advance\dimen0 by .5\dp\auxLabelbox@
     \setbox\auxLabelbox@=
            \hbox{\raise\dimen0\box\auxLabelbox@}}
 \def\eAdjust@{\let\vAdjust@\eAdjust@@}

 \def\tAdjust@@{%
     \setbox\auxLabelbox@=\hbox{\raise-\ht\auxLabelbox@
            \box\auxLabelbox@}}
 \def\tAdjust@{\let\vAdjust@\tAdjust@@}

 \let\vAdjust@\relax

 \def\lAdjust@{\let\hAdjust@\rlap}
 \def\rAdjust@{\let\hAdjust@\llap}

 \let\hAdjust@\relax\let\vAdjust@\relax

 \def\FetchLabel@#1(#2)#3\\{%
     \IN@0#2@@\ifIN@
        \setbox0=\hbox{\ignorespaces#1#3\unskip}%
        \ifdim\wd0>0pt
           \ms@g{}%
           \ms@g{ !!! Bad label(s)? !!!}%
           \message{ #1(#2)#3}%
        \fi
        \def\LabelMole@##1\endFetchLabel@{%
            \IN@0()\\@##1@%
            \ifIN@\def\Temp@{\FetchLabel@##1\endFetchLabel@}%
            \else\def\Temp@{}%
            \fi
            \Temp@
           }%
     \else
       \ignorespaces#1\unskip
       \setbox\auxLabelbox@=%
         \hbox to 0pt{\hss\ignorespaces\hAdjust@
          {\ignorespaces#3\unskip}\hss}%
       \vAdjust@
       \let\hAdjust@\relax\let\vAdjust@\relax
       \AugmentLabelBox@@{#2}%
       \ht\Labelbox@=0pt\dp\Labelbox@=0pt
       \let\LabelMole@\FetchLabel@%
     \fi\LabelMole@}

 \newtoks\XYSep@ 
 \def\SetXYSeparator#1{%
     \IN@0#1@@\ifIN@\XYSep@{*}%
     \else
     \XYSep@{#1}%
     \fi
     }

 \SetXYSeparator*

 \def\AugmentLabelBox@@#1{%
     \IN@0\the\XYSep@ @#1@\ifIN@
       \SPLIT@0\the\XYSep@ @#1@%
       \setbox\Labelbox@=\hbox to 0pt{%
         \unhbox\Labelbox@
         \Shifted@@{\the\Initialtoks@\Wddd@}%
         {\the\Terminaltoks@\Httt@}%
         {\box\auxLabelbox@}}%
     \else
         \ms@g{}%
         \ms@g{ !!! Bad insertion point. !!!}%
         \message{ (#1\ this point was rejected.)}%
     \fi
    }

 \def\FetchOption@#1[#2]#3\endFetchOption@{%
    \def\temp{#1}
    \ifx\temp\empty
       \Edgect@=#2\relax
       \let\EdgGl@b\relax
       \SetEdges@
       \Cleaner@#3%
    \fi}

 \def\Cleaner@#1[@]{\Labeltoks@{#1}}
     
 \def\PlaceLabels@@{\mathsurround=0pt
     \def\Cr@{\\}%
     \let\L\lAdjust@\let\R\rAdjust@
     \let\B\bAdjust@\let\E\eAdjust@\let\T\tAdjust@
     \expandafter\FetchOption@\the\Labeltoks@[@]\endFetchOption@
     \Wddd@=\Wdd@ \Edgescale@\Wddd@ 
     \Httt@=\Htt@ \Edgescale@\Httt@
     \expandafter\FetchLabel@\the\Labeltoks@\endFetchLabel@
     \box\Labelbox@
     }%

 \let \PlaceLabels@\PlaceLabels@@

 \def\AffixLabels#1{\setbox\Coordinatebox@=\hbox{#1}%
      \Wdd@=\wd\Coordinatebox@ \Htt@=\ht\Coordinatebox@
      \advance\Htt@ \dp\Coordinatebox@
      \hbox{\copy\Coordinatebox@\kern-\Wdd@ 
           \Shifted@@{0pt}{-\dp\Coordinatebox@}%
           {\PlaceLabels@\PlaceGrid@}%
           \kern\Wdd@}%
      \GridSwitch@ 
      \LabGl@b\Labeltoks@{()\\}%
      }
 
   \let\wlog\LabelFigwlog@ld   
   \catcode`\@=\LabelFigCatAt  


 
                                By

              Raymond S\'eroul <A18645@FRCCSC21.BITNET>
                                and 
              Laurent Siebenmann <lcs@topo.math.u-psud.fr>
    
              VERSIONS: July 1991, Oct 1991, Jan 1992, July 1992

INTRODUCTION

      This labelling package is intended for TeX users who
rely on non-TeX sources for for their graphics inserts.  It
provides means for adding TeX labels to such inserts with a
minimum of fuss. 

       For most labels, TeX users have in the past found it
reasonably convenient to rely on non-TeX sources. Typical
occasions when an inescapable need for TeX labels seemed to
arise are

 (a) when the graphics program lacks certain exotic or complex
mathematical symbols

 (b) when the very highest typographical quality is wanted for the
labels

 (c) when labels included with the graphics fail to print, 
 and you cannot figure out why (cf. boxedeps.doc).  The labels
 provided by labelfig.tex are 100

       Since this package first appeared, many users, who in the
past scarcely dreamed of using TeX labels, have come to use
nothing but.  So it is now appropriate to add

Intoxication Warning:  TeX labels may be addictive and expensive. 

     If you have a fast preview you may disagree, and even find
that this package provides an agreeable paste-up environment; see
extra applications at end.

     Note to publishers: It is possible and convenient to ultimately
export the TeX labels produced by labelfig.tex to become an integral
part of the EPS file. This is often desired by a publisher who typically
uses an "upmarket" graphics or page layout program, with which the
staff is skilled in perfecting figures.  See Appendix I for
a recipe.

     The authors are grateful to Patrick Ion of Math Reviews for
helpful comments and encouragement.

BASIC INSTRUCTIONS

    After reading in the macro file using

preview or proof your figure with a coordinate grid printed on
top, by typing the following:

    \ShowGrid  
    \AffixLabels{<the graphics insertion>}

Here <the graphics insertion> is what you would type to insert
the graphics object alone without the grid.  This must provide
for the space around it. For example <the graphics insertion>
might well be \BoxedEPSF{MyFigure scaled 700} using the
boxedeps.tex macro package (from same source); this provides a
TeX box containing the encapsulated PostScript insert specified by
the file MyFigure. \AffixLabels{...} provides the grid (supposing
\ShowGrid is present) and later, once you have specified labels
using the grid, it will "tack on" the labels.

     The grid is a sort of (usually elongated) checkerboard of
ten rows and ten columns and its (internal) partitions are by
default numbered  .1, ... ,.9  both horizontally (X-coordinate
running left to right) and vertically (Y-coordinate running bottom
to top).  Thus the points enclosed by the grid correspond to the
points of the unit square in the cartesian "X-Y" plane, the lower
left corner corresponding to the origin (0,0).  By extrapolation,
the full page corresponds to a larger rectangle in the plane.

     These coordinates serve to position labels as follows.
Before the \AffixLabels{...} command type label specifications:

  \SetLabels
   (<X-coordinate>*<Y-coordinate>) <first label> \\
   .
   .
   .
   (<X-coordinate>*<Y-coordinate>)  <last label> \\
  \endSetLabels

Each row specifies one label and is terminated by \\.  In each
row, the position indicator comes first; it is written as a
standard cartesian point except that the X- and Y- coordinates
are separated by * rather than a comma because TeX allows a
comma as decimal point. There are no dimension units to specify
as the unit is the grid itself.

     By default, this cartesian point specifies where the middle
of the baseline of the label will be located.  However if you precede
the point by \L [or \R] the left [or right] edge of the baseline will
be located there. Similarly you may also precede the point by \T, \E,
or \B to vertically align the top equator or bottom of the label box
at the specified point.  This gives nine standard positions of
the label with respect to the insertion point --- corresponding to
the eight principle points of the compas and the center

                     \L\T     \T      \R\T

                     \L\E     \E      \R\E

                     \L\B     \B      \R\B

But this neglects the default "baseline" level of TeX,
giving potentially three more positions

                     \L    <no tag>   \R

For text, the baseline level is often the preferred. Its relation to
the others is variable. It will often coincide with the bottom level,
as happens for "X".  But it is often distinct, as for "g", in which
case you have in all 12 distinct positions rather than 9.

     It is convenient to think of this specification of label
position as attaching the label by a thumb-tack to the coordinate
grid. There are up to twelve positions of the thumb-tack on the
label, while the position of the thumb-tack on the coordinate grid is
arbitrary.  Normally, one choses the position of the thumb-tack on
the label to be the one that is the closest to the item being
labeled.  There are good reasons for this "rule of thumb":

   (a)  It facilitates correct positioning at first try.

   (b)  If the scale of the figure must be altered after labels
have been affixed, the labels have a good chance of remaining well
positioned.

   (c)  The visible grid need not extend beyond the "bounding box"
for the figure, because the best preferred position is always
(at least almost) within the bounding box .

The second reason is particularly important. Indeed it often
happens that scale has to be altered after labelling begins, in
order to either provide space for the labels, or to adjust
proportions between the labels and the figure.  (The size of labels
is unaffected by scaling.)

     Here is an artificial but self-contained test which uses
TeX rules to make a graphics object.

TEST

    Do not skip this!



 \def\FrameIt#1{\hbox{\vrule$\vcenter {\hrule\kern3pt%
             \hbox {\kern3pt #1\kern3pt}%
               \kern3pt\hrule}$\relax\vrule}}

 \def\Caption#1#2{\FrameIt{%
       \vtop {\hsize=#1\relax \parindent=0pt
         \leftskip=0pt \rightskip=0pt plus15pt
         \parfillskip=0pt
         \lineskip=1pt\baselineskip=0pt
         #2}}}

 \def\FirstQuadrant{\hbox to 100pt{\vrule\vbox to 100pt{%
        \hbox to 100pt{\hfil}\vfil\hrule}\hss}}


  \SetLabels
    \R(.5*.2) $\zeta\,\cdot$\\
    (.9*-.10) $\xi$\\
    \R(-.03*.9) $\eta$\\
    \T(.5*.9) \Caption{70pt}{%
          \it The norm of
          $g(\xi+i\eta)$ is indicated on
          contours of this invisible surface.}\\
  \endSetLabels

  \AffixLabels{\FirstQuadrant} \end

  Note that the coordinates to use for labels are indicated on the
edges of the grid (when visible) corresponding to the conventional
x- and y- axes of the Cartesian plane. By default the grid is
1-by-1. However, by the command \Edges{100}, you can change this
to 100-by-100 and many users find this alternative most
convenient. Place the command \Edges{...} in your style file (or
header) since its effect is is global. Other possible edge values
are 10 and 1000.

  If you use the command \Edges{...} at all, do so with care.  For
if you accidentally delete an \Edges{...} command your labels will
abruptly be badly misplaced and may logically but mysteriously
generate "dimension too big" errors under TeX and "off page" errors
under your driver.  

  You can dictate the edgescale for an individual figure by giving
the scale in brackets immediately after \SetLabels.  Thus, to
import into an article using say \Edge{100} a figure labelled using
another edgescale, say the original 1-by-1 default, you can use
\SetLabels[1]...\endSetLabels.


GETTING IT DOWN PAT

     Complicated labeling deserves the same respect as
complicated mathematics.  Do not expect it to come out perfect the
first time!  What is needed in either case is a mechanism to
repeatedly typeset troublesome pieces.

     One mechanism is always available.  One does complicated
labelling in a separate "test" file involving just the figure being
labelled;  a texpert will know how to \dump TeX's current state as
a temporary format that restarts rapidly at each retry.  Usually,
one then pastes the completed labelled figure back into the main
TeX file, but, of course, one can also \input it as an auxiliary
file.

     If you do not have a TeXpert at handy, here is a first
approximation to an efficient setup. By deletions reduce a copy
of your article to just a few lines before and after the figure.
Now label the figure, and finally, copy and paste the labelled
figure to the original article. Then copy the next figure to label
into this testbed and repeat. The TeXpert can improve the  speed
at which TeX starts up, by compiling a format specifically for
your article; just one caution: best NOT include in the format
ephemeral details of setup like \Set<mydriver>ArtSpecials (from
boxedeps.tex because this reads  figure dimensions which you may
change during your work session.

     An improved mechanism to repeatedly typeset troublesome
pieces is now available on the Macintosh; it is called LinoTeX;
see the same ftp sources.  It could be set up on many types
of computer.

     Before using labelfig.tex to attach labels to a graphics
object inserted using boxedeps.tex or BoxedArt.tex, make it a
firm rule to carefully adjust the bounding box using the trimming
commands of these packages, and also at least tentatively scale
and position the object. Beware of changing the grid inadvertently
after the labels have been positioned.  For example, correcting
the bounding box of a PostScript graphics object can foul up the
labels by changing the coordinate grid to which the labels are
attached. This is particularly true for the trimming  commands of
boxedeps.tex and BoxedArt.tex. However, as noted already, change
of scale is much less disruptive, and modest adjustments should be
well tolerated.

     Sometimes the labels protrude so far from the bounding box
of a figure that the figure has to be repositioned.  Best do this
by ad hoc spacing, say using \hglue and \vglue; altering the
bounding box would create a vicious circle.

     Remember that you are responsible for preventing labels
from overlapping. You are responsible for all label typography
including size and style. A label is really just about anything
that can be put in a TeX box. Note that spaces at the beginning
and end of labels will normally be suppressed; if you really want
them you must protect them with TeX braces.

     This package temporarily sets the \mathsurround parameter
of TeX to zero  while the labels are being affixed. This is done
because nonzero \mathsurround space would influence the position
of left and right aligned labels; then, when a texpert or printer
modifies mathsurround, diagram labeling might be disastrously
altered. There is a small price to pay involving labels that are
formatted as caption boxes including mathematics: you  may want or
need to specify an explicit mathsurround space within the caption
box; it will not influence anything outside.

     Those hostile to the use of * as separator between
the X and Y coordinates of label insertion points, are free to
impose another using \SetXYSeparator{<the new separator>}.  
Americans may prefer "," to "*" since they never use a 
comma as a decimal point; on the other hand, * may be more visible.

APPENDIX (I)  MERGING labelfig.tex LABELS INTO AN EPSF GRAPHICS OBJECT.

     As promised in the introduction, here is a recipe useful for
publishers. It works at least on Macintosh and at least for vectorized
graphics and Adobe type1 fonts.  (There is surely a similar recipe for
PCs under MSWindows.)

 (a)  Use boxedeps.tex utility to integrate the figure given by the eps
file, "x.eps" say, with a visible frame around it.  See
\ShowDisplacementBoxes command in boxedeps.tex.  To get precise results
automatically it is important to use the \Trim... commands of
boxedeps.tex making the "DisplacementBox" neatly fit the figure.

 (b)  Use the TeX printer driver and LaserWriter (versions >= 8.1.1) to
export to an EPSF the DVI page containing the integrated, labelled
figure. You now have an EPS file  "xx.eps"  that contains too much, and at
the wrong scale, and at wrong position.

 (c)  Convert the EPSF to an Adode Illustrator format EPSF using
the shareware utility called epsConvert by Sam Weiss
1993-- (currently $25).

 (d)  In Illustrator (or a compatible program), group the labels and the
"DisplacementBox"; copy them to the clipboard and paste them into "x.ps".
This step requires that all the label fonts be "visible to the Macintosh.

 (e)  Translate and scale the pasted group consisting of the labels plus
the "DisplacementBox" so as to make the "DisplacementBox" the bounding
box of (labelless) figure represented by "x.eps".  At this point the
labels will be correctly placed on the figure "x.eps".

 (f)  Ungroup and delete the "DisplacementBox".  The result is the
desired single EPS file, "x+.eps" say, It contains the original figure
plus its labels.  

     Using grouping and ungrouping appropriately in "x+.eps", a
publisher's staff can very efficiently improve label positions etc.

APPENDIX II)  SOME EXOTIC APPLICATIONS

     The grid of labelfig.tex is analogous to a light-table in
classical page makeup with wax or latex glue.  In principle, you
can use it to compose any page from its indivisible parts.  This
even has some of the artisanal charm of classical paste-up
provided you have a fast screen preview to make the process
"interactive".

     In practice labelfig.tex is a tool for nonstandard jobs.
Here are a few going beyond the labelling already discussed.

(I)  GRAPHICS INTEGRATION.

     This is accomplished by treating the imported graphics
objects as labels.  The underlying graphics object is then
typically an empty  \vbox to <dimension>{\vfill} in a TeX
\midinsert...\endinsert construction.  A label line
might be of the form

   (.1*.1) \special{... MyFigure ...}\\

The exact form of the special command varies from driver to
driver.  However, in the case of encapsulated PostScript graphics
(EPSF norm), by relying on boxedeps.tex, one can have the
following standard syntax (independant of driver  (see
boxedeps.doc for details.
  
  (.1*.1) \BoxedEPSF{MyFigure scaled <scale in mils>}\\

This may be slow since it requires TeX to read the PostScript
file to read bounding box using many complex macros.  So you
may want to try

  (.1*.1) \EPSFSpecial{MyFigure}{<scale in mils>}\\

which is fast and driver independant, but it squashes the
bounding box, normally to its lower left corner.

     Similarly for graphics of the Macintosh PICT norm ---
using BoxedArt.tex (same sources) in place of boxedeps.tex.

     This approach to integration is to be recommended when
one is assembling a composite graphics object.

 (II)  COMMUTATIVE DIAGRAM ENHANCEMENT

     Commutative diagrams or arrays of mathematical objects
connected by arrows of various sorts are common in mathematics.
The mathematical objects require the use of TeX.  Recently TeX
acquired a good collection of arrows of all slopes --- that of
LamSTeX --- plus pwerful macros to build the diagrams.

     However, even the LamSTeX collection is often
inadequate; it lacks for example double shafted arrows, dotted
arrows and curved arrows. Fortunately it is possible to produce
such arrows on an individual basis using sophisticated graphics
programs such as Illustrator and AldusFreehand (both serving
the EPSF norm) or using Metafont (with its public domain norm).
Since the creation of each new arrow is a work of love, you
probably want to limit the number of arrows by using LamSTeX
for most arrows. The 40K commutative diagram module of LamSTeX
has been adapted to work with AmSTeX and a copy may be posted
with LabelFig and related files. Unfortunately no one has yet
offered a version that works with Plain TeX or LaTeX.

       Suffice it here to say that when the exotic arrow has
been somehow imported into TeX, labelfig.tex treats it as a
label that one affixes to the commutative diagram.  Two other
steps will be treated in separate notes, namely the matter of
extracting the dimension specifications for the arrow and the
construction of the arrow --- for these steps are far from
unique and often depend intimately on your computer environment. 
Notes for the Macintosh-Textures-Illustrator combination are
found in the file ExoticArrows.doc.

 (III) NESTING 

Ingenuity pays off in exploiting labelfig.tex. One can
mix graphics and typography quite freely.  labelfig.tex is good
for freeform or overlapping arrangements, while boxedeps.tex (or
BoxedArt.tex) is best for regimented non-overlapping
arrangements --- and the two can be combined.

     The default behavior of labelfig.tex is not ideal 
for nesting objects, because to prevent trouble for beginners
the register for labels is globally cleared when \AffixLabels
concludes.  But there are switches available

      \LabelsGlobal      \LabelsLocal

which change this.  To understand this, extend the above test 
by something like:


 \LabelsLocal

 \SetLabels
    (.5*.5) AAA\\
 \endSetLabels

 {
 \SetLabels
    (.5*.5) ZZZ\\
 \endSetLabels
   \AffixLabels{\FirstQuadrant}
 }

   \AffixLabels{\FirstQuadrant}


     There are however potential pitfalls.  Neither
labelfig.tex nor boxedeps.tex has been tested under extreme
conditions. Problems may occur if their procedures are
indiscriminately nested. For boxedeps.tex (not labelfig.tex)
there is a precise cause for worry, namely many of its
variables are "global", which means that TeX braces will not
provide the protection one might expect.

COMMAND SUMMARY FOR labelfig.tex

  Here [...] means optional (one or zero)
       [...]* means any number of such constructs

  \SetLabels
    [[<P>](<X><Sep><Y>) <label> \\]*
  \endSetLabels
  \ShowGrid  
  \AffixLabels{<the figure>}

   --- <P> is tack position, one of eleven or empty
              order irrelevant

                   \L\T      \T      \R\T

                   \L\E      \E      \R\E

                     \L               \R

                   \L\B      \B      \R\B

   --- (<X><Sep><Y>) insertion point;
  <Sep> is separator, = * by default;
  \SetXYSeparator{<Sep>} changes it.
   <X> and <Y> are real numbers

  --- <label> a label to attach 

  --- <the figure> the figure to label 

  \GlobalLabels (default)     
  \LocalLabels  setting for nested constructs.

 \Grids makes ALL grids appear; \HideGrid then makes just next disappear.
 \noGrids returns to default.  The commands are always global.

 \GridLineWidth{<dimension>} adjusts width of grid lines. Default is very
small, to give "hairline" effect. If your grid lines are missing try
setting \GridLineWidth{1pt}.

 \Edges#1 globally changes the edge size of all grids to the numerical 
value #1, which must be 1, 10, 100, or 1000.  The default is 1.

VERSION HISTORY.
 --- Jan 1993: \Edges#1 and [??] option after \SetLabels
 --- July 1992: \Grids, \noGrids, \HideGrid;
       Gridlines become hairlines; \GridLineWidth{<dimension>}.
 --- Oct 1991, Jan 1992: \SetXYSeparator{<Sep>},  \LabelsGlobal,
       \LabelsLocal.
 --- July 1991: first release

Address for bugs and other feedback:

        Raymond S\'eroul
        IREM and Lab. de Typographie Informatise
        Univ. Rene Descartes
        Strasbourg

    Tel 33-88-41-63-45
    Email:  A18645@FRCCSC21.BITNET

        Laurent Siebenmann
        Mathematique, Bat. 425,
        Univ de Paris-Sud,
        91405-Orsay,
        France

    Tel 33-1-6941-7949; 
    Email: lcs@topo.math.u-psud.fr



\theoremstyle{plain}

\newtheorem{theorem}{Theorem}[section]
\newtheorem{lemma}[theorem]{Lemma}
\newtheorem{corollary}[theorem]{Corollary}
\newtheorem{proposition}[theorem]{Proposition}
\newtheorem{claim}[theorem]{Claim}

\newtheorem{question}[theorem]{Question}

\newtheorem{definition}[theorem]{Definition}

\theoremstyle{definition}

\newtheorem{remark}[theorem]{Remark}

\numberwithin{equation}{section}
\numberwithin{figure}{section}





\newcommand{\N}{\mathbb{N}}
\newcommand{\Z}{\mathbb{Z}}

\newcommand{\R}{\mathbb{R}}


\newcommand{\origin}{o}
\newcommand{\edge}{\sim}

\newcommand{\fsubset}{\subset \subset}

\newcommand{\dist}{\mathrm{dist}}

\newcommand{\ind}{\mathds{1}}
\newcommand{\veps}{\varepsilon}

\newcommand{\ball}{\mathcal{B}}

\newcommand{\intboundary}{\partial_{\mathrm{int}}}
\newcommand{\extboundary}{\partial_{\mathrm{ext}}}

\newcommand{\Aut}{\mathrm{Aut}}

\newcommand{\subgrp}{\leq}

\newcommand{\tree}{\mathbb{T}}

\newcommand{\volume}{\mathrm{Vol}}

\newcommand{\lora}{\longrightarrow}
\def\cost{\mathsf{cost}}
\def\Cay{\mathsf{Cay}}
\def\cL{\mathcal{L}}


\newcommand{\prob}{\mathbb{P}}
\renewcommand{\P}{\mathbb{P}}
\newcommand{\E}{\mathbb{E}}
\newcommand{\Var}{\mathrm{Var}}

\newcommand{\capacity}{\mathrm{cap}}

\newcommand{\POI}{\mathsf{Poisson}}
\newcommand{\Bernoulli}{\mathsf{Bernoulli}}

\newcommand{\Green}{\mathcal{G}}

\newcommand{\Falg}{\mathcal{F}}

\def\cE{\mathcal{E}}



\newcommand{\wormnu}[1]{\nu^{\text{worm;}{#1}}}

\newcommand{\latticeanimals}{\mathcal{H}}

\newcommand{\hitTime}{T^+}

\newcommand{\zoospace}{Z}
\newcommand{\zooalgebra}{\mathcal{Z}}
\newcommand{\zooPP}{\mathbf{Z}}
\newcommand{\zooPPalgebra}{\boldsymbol{\mathcal{Z}}}
\newcommand{\animals}{\mathcal{X}}

\newcommand{\LTmeasure}{\mu}
\newcommand{\totalsize}{\Sigma}

\newcommand{\worms}{\mathcal{Y}}

\newcommand{\grp}{\Gamma}
\newcommand{\graph}{G}

\newcommand{\spectral}{\rho}
\newcommand{\Cheeger}{\mathrm{h}}
\newcommand{\degree}{d}

\newcommand{\trace}{\mathrm{Tr}}

\newcommand{\fatsize}{\mathcal{S}}

\newcommand{\bounced}[1]{\check{#1}}

\newcommandtwoopt{\bworms}[2][][]{ \bounced{\worms}_{#1}^{#2}}
\newcommandtwoopt{\bloctime}[2][][]{ \bounced{\mu}_{#1}^{#2}}
\newcommandtwoopt{\btotalloctime}[2][][]{ \bounced{\Sigma}_{#1}^{#2} }
\newcommandtwoopt{\bfatsize}[2][][]{ \bounced{\fatsize}_{#1}^{#2} }


\begin{document}


\title{Nonamenable Poisson zoo}

\author{G\'{a}bor Pete\footnote{
HUN-REN Alfr\'ed R\'enyi Institute of Mathematics, Re\'altanoda u. 13-15, Budapest 1053 Hungary, and Department of Stochastics, Institute of Mathematics, Budapest University of Technology and Economics, M\H{u}egyetem rkp.~3., Budapest 1111 Hungary. Emails: gabor.pete AT renyi.hu, roksan AT math.bme.hu}
\and 
S\'andor Rokob$^*$}

\date{\today}

\maketitle

\begin{abstract}
In the Poisson zoo on an infinite Cayley graph $\graph$, we take a probability measure $\nu$ on rooted finite connected subsets, called lattice animals, and place i.i.d.~Poisson($\lambda$) copies of them at each vertex. If the expected volume of the animals w.r.t.~$\nu$ is infinite, then the whole $\graph$ is covered for any $\lambda>0$. If the second moment of the volume is finite, then it is easy to see that for small enough $\lambda$ the union of the animals has only finite clusters, while for $\lambda $ large enough there are also infinite clusters. Here we show that:
\begin{enumerate}
\item If $\graph$ is a nonamenable free product, then for ANY $\nu$ with infinite second but finite first moment and any $\lambda>0$, there will be infinite clusters, despite having arbitrarily low density.
\item The same result holds for ANY nonamenable $\graph$, when the lattice animals are worms: random walk pieces of random finite length.
\end{enumerate}
It remains open if the result holds for ANY nonamenable Cayley graph with ANY lattice animal measure $\nu$ with infinite second moment.
\begin{enumerate}
\item[3.]
We also give a Poisson zoo example $\nu$ on $\tree_\degree\times \Z^5$ with finite first moment and a UNIQUE infinite cluster for any $\lambda>0$.
\end{enumerate}

\medskip

\noindent \textsc{Keywords:} random walk interlacements, Poisson Boolean model, invariant percolation, phase transitions, nonamenable groups, measurable cost

\medskip

\noindent \textsc{AMS MSC 2020:} 60K35, 82B41, 37A20

\end{abstract}

\tableofcontents

\section{Introduction}\label{s.intro}

\subsection{Results and questions}\label{ss.results}

The {\bf Poisson zoo} is a very simple correlated {site percolation} model on infinite transitive graphs, introduced in this generality by Ráth and Rokob \cite{RathRokob2022}. Intuitively, we drop random rooted connected sets, called {\bf lattice animals}, at each vertex of the graph, independently, with intensity $\lambda>0$. One could think of a random collection of balls with random radii, or of random walk trajectories with some random finite lengths. In this paper, we will establish quite general scenarios when we can take $\lambda$ arbitrarily small, making the overall coverage density arbitrarily small, but still, large animals occur often enough to find each other and produce infinite connected components. This behaviour is very different from i.i.d.~Bernoulli percolation, and is interesting for several reasons.

More precisely, we start with a locally finite infinite graph $\graph=(V,E)$ and a subgroup of graph automorphisms $\grp\leq \Aut(\graph)$  that acts transitively on $V$. We also have a probability measure $\nu=\nu_o$ on rooted connected sets $(o,H)$ with $o\in H \subseteq V$, called {lattice animals}, where the measure is invariant under root-preserving automorphisms in $\grp$. For any $x\in V$, we can take any graph automorphism $\varphi_{x} \in \grp$ taking $o$ to $x$, and translate $\nu_o$ to $\nu_x := \nu_o \circ \varphi_{x}^{-1}$. It is easy to see that, because of the invariance of $\nu_o$ under root-preserving automorphisms, $\nu_x$ does not depend on the choice of $\varphi_{x}$. (See Subsection~\ref{subsection:poisson_zoo_on_transitive_graphs} for more details.) We fix $\lambda>0$. For each $x\in V$, we  sample an independent $\POI(\lambda)$ variable $N_x^\lambda$, then $N_x^\lambda$ independent animals from $\nu_x$, denoted by $\{H^x_i\}_{i=1}^{N_x}$. We then consider the subset of $V$ covered by all the animals,
\begin{equation}\label{e.zoo}
S_\nu^\lambda := \bigcup_{x\in V} \bigcup_{i=1}^{N_x} H^x_i\,,
\end{equation}
and the main question is whether
\begin{equation}\label{e.lambdac}
\lambda_c = \lambda_c(\graph,\nu) := \inf\left\{ \lambda > 0 :  \P\big(S_\nu^\lambda \textrm{ has some infinite connected component}\big)>0  \right\}
\end{equation}
is strictly between 0 and $\infty$. In the simplest example, Bernoulli site percolation, where animals are just single vertices, $0< \lambda_c$ holds for any bounded degree graph \cite[Theorem 6.47]{LyonsPeres2016}, while the much more difficult direction, $\lambda_c < \infty$, is known for all Cayley graphs that are not finite extensions of $\Z$ \cite{Duke} and for all uniformly transient graphs, even without transitivity \cite{EST}.

To state our results, we need one more technical but crucial assumption: $\graph$ should be not only transitive, but also, the action of $\grp$ on it should be {unimodular}; see Definition~\ref{def:unimodularity} below. For instance, any finitely generated Cayley graph $\graph$ of any group $\grp$, with the group acting on $\graph$, is fine. Unimodularity ensures the following basic facts; see Lemmas~\ref{lemma:two_silly_lemmas} and~\ref{lemma:size_biasing_for_Poisson_zoo}:
\begin{itemize}
\item If $\E_{\nu} |H| = \infty$, then, for any $\lambda > 0$, the entire graph is covered by lattice animals.
\item If  $\E_{\nu} |H| < \infty$, then the number of animals covering any given vertex $x$ has distribution $\POI\big( \lambda \E_{\nu} |H| \big)$. In particular, for small $\lambda$, the density of covered vertices is small.
\item If $\E_{\nu} |H|^2<\infty$, there is a percolation phase transition: $\lambda_c(\graph,\nu) \in (0,\infty)$. The main reason is that the expected total size of animals covering a given vertex is $\lambda\, \E_{\nu} |H|^2$. Note the size-biasing effect: the second moment of the animal size governs the first moment in any cluster exploration.
\end{itemize}

On $\R^d$, if the animals are {\it balls} of random radii, this is called the {\bf Poisson Boolean model}, and Gou\'er\'e \cite{Gouere2008} proved that $\E_{\nu} |H| < \infty$ (now $|H|$ denoting Lebesgue measure) already implies the existence of a non-trivial $\lambda_c \in (0,\infty)$. This was extended by an easy coupling argument to graph metric balls in $\Z^d$ in \cite{RathRokob2022}, and by copying the proof of \cite{Gouere2008} to transitive graphs of polynomial volume growth in \cite{ColettiMirandeGrynberg2020}. The easy general facts displayed above were noticed by R\'ath and Rokob \cite{RathRokob2022}, who proved that, for {\bf worms}, where the animals are random walk trajectories of a random length, for $d\ge 5$, already  $\E_{\nu}|H|^{2-o(1)}=\infty$ suffices for $\lambda_c=0$, where the $o(1)$ power is just a poly-$\log\log$ factor. It has remained open if $\E_{\nu}|H|^{2}=\infty$ for worms is actually sufficient for $\lambda_c=0$; see \cite[Question 1.11]{RathRokob2022}. In any case, that work has shown that balls and worms are almost on the two extremes of the possible behaviours on $\Z^d$, $d\ge 5$.

In the present paper, we close the gap for worms on any nonamenable unimodular transitive graph, and show that the difference between worms and balls and any other lattice animal measure disappears in many (possibly all?) nonamenable examples.

\begin{theorem}[Random length worms on any nonamenable group]\label{t.worms}
If  $\graph$ is \emph{any} nonamenable unimodular transitive graph, and the lattice animals are simple random walk trajectories of random length, satisfying $\E_\nu |H|^2=\infty$, then the Poisson zoo has $\lambda_c=0$.
\end{theorem}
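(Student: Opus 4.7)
The plan is to exhibit an infinite cluster through $o$ by a multi-scale cascading argument that combines the size-biasing phenomenon with the geometry of simple random walks on nonamenable graphs. By Lemma~\ref{lemma:size_biasing_for_Poisson_zoo}, conditionally on $\{o\in S_\nu^\lambda\}$ the worm $\omega_0$ through $o$ has length distribution size-biased by its range $|H|$; since SRW on a nonamenable transitive graph has positive speed $v=v(\grp)>0$, one has $|H|\asymp L$ with high probability, so the size-biased length has mean $\asymp \E_\nu L^2/\E_\nu L=\infty$. Thus for every threshold $T$, with positive probability $|\omega_0|\ge T$ and $\omega_0$ reaches graph distance $\ge vT$ from $o$. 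Such a long $\omega_0$ will serve as the ``root'' of the cascade.

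Next, I would show that a long seed $\omega_0$ is hit by many other worms in the Poisson zoo. Using the spectral radius bound $\rho<1$, the range of an SRW of length $T$ on a nonamenable graph has $\capacity(\omega_0)\gtrsim T$ with overwhelming probability, and by the standard capacity/hitting inequality the expected number of other worms whose trajectories intersect $\omega_0$ is therefore at least $c\lambda T$. A Palm--Mecke calculation shows that a worm conditioned to hit $\omega_0$ is size-biased by its own hitting capacity, which on a nonamenable graph is comparable to its length, so a positive fraction of the hitting worms are themselves long. Choosing scales $T_0<T_1<\cdots$ so that a parent of length $\ge T_k$ produces, in expectation, at least two long children of length $\ge T_{k+1}$, and using the spatial independence of disjoint regions of the underlying Poisson process, one couples the cluster exploration to a supercritical Galton--Watson tree and concludes that the cluster of $o$ is infinite with positive probability, whence $\lambda_c(\graph,\nu)=0$.

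The main obstacle is sustaining the cascade at every scale when $\lambda$ is arbitrarily small and, in the borderline regime, $\P_\nu(L\ge T)$ decays nearly as $T^{-2}$. The correct branching parameter is not worm length but hitting capacity of children with respect to their parents; on a nonamenable graph, capacity and length are comparable, but turning the Palm-biased length distribution into a usable Galton--Watson estimate with offspring mean at least two requires a careful tail analysis tailored to the spectral gap. A secondary difficulty is realising different scales on essentially disjoint regions of the Poisson process, which is done by combining the positive-speed spreading of the seed with the exponential decay of the Green's function to secure independence across scales.
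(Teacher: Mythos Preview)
Your outline has the right ingredients up through the capacity-based first moment computation: size-biasing, linear capacity $\capacity(K)\asymp|K|$ on nonamenable graphs, and positive speed all do appear in the paper's argument. But the route you take from there---coupling the cluster exploration to a supercritical Galton--Watson tree via ``spatial independence of disjoint regions''---is exactly the point where the paper says this approach breaks down. On a general nonamenable transitive graph there is no tree-like separation of space: the worms that hit your seed $\omega_0$ at different boundary points can, and typically will, intersect one another downstream, so their children are neither independent nor disjoint. The paper addresses this explicitly (see the discussion at the end of Subsection~\ref{subsection:poisson_zoo_on_free_products} and the paragraph before Lemma~\ref{lemma:upper_bound_on_the_second_moment_of_the_occupied_sets_size_for_worms}): the branching approach works on free products precisely because cutpoints yield genuinely disjoint subgraphs, and this is what is missing in the general case. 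Your ``secondary difficulty'' of realising scales on disjoint regions is thus not secondary at all; it is the main obstruction, and invoking exponential Green's function decay does not by itself give you disjoint Poisson regions for distinct children of the same parent.

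What the paper does instead is abandon branching altogether and run a single deterministic-shaped exploration $E_0\subseteq E_1\subseteq\cdots$, fattening at each step through the \emph{new} exterior boundary $B_n=\extboundary E_n\setminus\extboundary E_{n-1}$ by worms that bounce off $B_n$ and avoid $\overline{E_n}\setminus B_n$. This gives conditional independence between steps without any spatial separation of siblings. The survival argument is then not Galton--Watson but a first/second moment scheme: Corollary~\ref{coro:lower_bound_on_expected_size_of_fat_worms_on_nonamenable_graphs} gives $\E[|C_{n+1}|\mid\Falg_n]\gtrsim \vartheta(R)\,|E_n|$ with $\vartheta(R)=\E[\mathcal L^2\ind[\mathcal L\le R]]\to\infty$, Lemma~\ref{lemma:upper_bound_on_the_second_moment_of_the_occupied_sets_size_for_worms} gives $\Var(|C_{n+1}|\mid\Falg_n)\le C(R)\,\E[|C_{n+1}|\mid\Falg_n]$, and then Lemma~\ref{l.growth} (Chebyshev plus a Borel--Cantelli product) shows $|E_n|$ grows geometrically forever with positive probability. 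The fixed truncation $R$ is what sidesteps your ``borderline tail'' worry: one does not need any scale hierarchy $T_0<T_1<\cdots$, just a single $R$ large enough that $\vartheta(R)$ beats the constants.
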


\vskip -0.2 cm
\begin{figure}[htbp]
\SetLabels
(0.7*0.35)\textcolor{red}{$E_{n-1}$}\\
(1*0.35)\textcolor{blue}{$E_{n}$}\\
\endSetLabels
\centerline{
\AffixLabels{
\includegraphics[width=2.8 in]{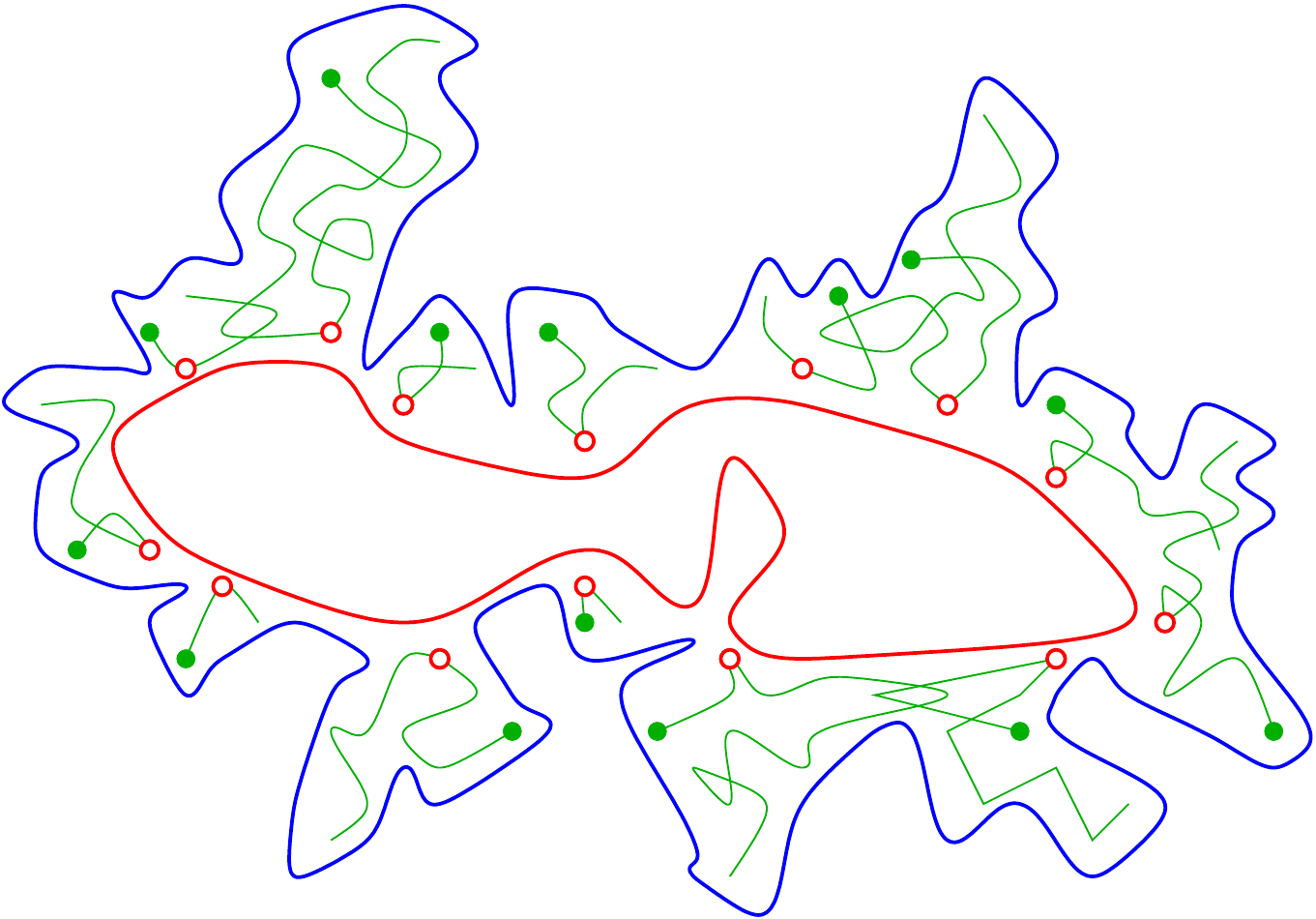}
}}
\caption{Exploring a cluster: at any given stage $E_{n-1}$, there are many exposed vertices on its boundary where new worms (the green trajectories) can touch it, creating a much larger $E_n$.}
\label{f.lincap}
\end{figure}

The key advantage of nonamenable transitive graphs compared to $\Z^d$, high $d$, is that the  {\bf random walk capacity} of {\it any} finite set $S\subset V(\graph)$ is \emph{linear} in the volume $|S|$; see Lemma~\ref{lemma:capacity_bound}. This makes it possible to build an exploration process, with enough new worms touching the already explored cluster from the outside for the exploration never to stop; see Figure~\ref{f.lincap}. The supercriticality of the exploration, for an arbitrary $\lambda>0$, will be fuelled by $\E_\nu |H|^2=\infty$: after size-biasing, this corresponds to the exploration step having an infinite mean. The difficulty in the argument is that the group is completely general (besides being nonamenable), we do not have any structural information, hence neither the dynamic renormalization approach of \cite{RathRokob2022} makes any sense, nor the exploration could be done in a usual branching process manner. Instead, we will make sure that the exploration keeps going forever with positive probability by using the trick that a successful induction may work by requiring more than what is minimally necessary: not only will the exploration keep going, but it will grow fast. The argument will rely on first and second moment estimates for the growth in each step of the exploration.

\begin{theorem}[General animals on free products]\label{t.free}
If $\graph$ is a nonamenable  unimodular transitive graph that is obtained as a {free product} $\graph_1 \star \graph_2$, then the Poisson zoo with \emph{any} lattice animal measure $\E_\nu |H|^2=\infty$ satisfies $\lambda_c=0$.
\end{theorem}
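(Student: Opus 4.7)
The plan is to replace the random-walk and capacity arguments used for worms in Theorem~\ref{t.worms} by a branching-process exploration that exploits the tree-of-blocks structure of a free product. By Bass--Serre theory, $G = G_1 \star G_2$ is the union of blocks (copies of $\Cay(G_1)$ and $\Cay(G_2)$) glued along single cut vertices into a tree $T$. The crucial consequence for us is this: for any finite connected $A \subset G$, the complement $G \setminus A$ decomposes into pairwise disjoint \emph{branches}, each rooted at a single cut vertex $v \in \partial A$ and corresponding to a subtree of blocks hanging off $A$. Consequently, the Poisson zoo restricted to distinct branches is conditionally independent given the animals that meet $A$.

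Fix $\lambda > 0$. By the size-biasing identity (Lemma~\ref{lemma:size_biasing_for_Poisson_zoo}), conditional on a vertex $o$ being covered, an animal $A_0$ through $o$ has a law weighted by $|A|\,\nu_o(A)$. Because $\E_\nu|A|^2 = \infty$, this size-biased law has infinite mean, so with positive probability $A_0$ can be taken arbitrarily large. Using nonamenability ($|\partial A_0| \geq h |A_0|$) together with the observation that only ``interior'' vertices of $A_0$ (those whose two blocks both lie inside the block-hull of $A_0$) fail to contribute a branch, $A_0$ yields at least $c |A_0|$ disjoint branches for some constant $c = c(G) > 0$. From each branch-root $v$, the independent Poisson zoo inside the branch covers $v$ by a new animal $A_1 \ni v$ with probability $\geq q(\lambda) > 0$, uniformly in $v$; conditionally on this, $|A_1|$ is again size-biased with infinite mean. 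Iterating, I obtain a multi-type branching process in which a node carrying an animal of size $K$ has $\Theta(K)$ children, each extending independently with probability $\geq q$. The expected offspring per node is thus $\Theta(q \cdot \E[K_{\text{size-biased}}]) = \infty$, so the process is supercritical; standard branching-process theory gives positive survival probability, and hence an infinite cluster in $S_\nu^\lambda$. Since this holds for arbitrarily small $\lambda > 0$, we get $\lambda_c = 0$.

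The main obstacle is preserving independence across branches when a single animal $A_1$ may extend into several branches at once, spoiling the conditional independence of sibling branch explorations. I would address this by always restricting each branch's contribution to animals whose \emph{origin} (in the sense of $\nu_x$) lies in that branch, and by carefully peeling off their intersections with already-explored regions. A secondary technical obstacle is to uniformly lower-bound the number of \emph{extendable} branches produced by each $A_j$ (not merely in expectation): this should follow from a Paley--Zygmund second-moment estimate applied branch-by-branch, leveraging the independence across branches together with the uniform positivity of $q(\lambda)$, which itself uses unimodularity.
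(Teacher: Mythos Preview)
Your overall architecture---a branching-process exploration exploiting the cutpoint structure of the free product, with independence across branches coming from disjointness of the corresponding subgraphs---is essentially the paper's approach. However, there is a genuine gap that the paper devotes considerable effort to, and which you do not identify.

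\textbf{The parity issue.} Your claim ``conditionally on this, $|A_1|$ is again size-biased with infinite mean'' fails in general. Consider $\nu$ supported entirely on paths in the $G_1$-factor (e.g., on $\Z\star\Z$ with generators $a,b$, animals are words in $a$ only). If a branch-root $v\in\partial A$ is reached via a $G_1$-edge, then animals rooted in the branch that cover $v$ and stay in the branch must use the $G_2$-copy at $v$, or else the remaining piece of the $G_1$-copy at $v$ on the branch side---and the latter may be bounded. In the extreme case, animals rooted in the fresh $G_2$-copy cannot reach $v$ at all (they are $G_1$-paths), so the only animals in the branch covering $v$ are those rooted at $v$ itself, giving first moment $\lambda m_1$, not $\lambda m_2$. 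The restricted size-biasing (paper's Lemma~\ref{lemma:lower_bound_on_expected_total_size_of_animals_hitting_one_vertex_on_free_product}) shows that the infinite-mean conclusion holds for branches in \emph{at least one} of the two directions, but not necessarily both; which one may even depend on the truncation level $R$.

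\textbf{How the paper fixes it.} The paper (i) identifies the good direction, say $G_1$; (ii) when a boundary vertex $v$ has its $G_1$-neighbours already exposed, steps once in the $G_2$-direction to a fresh vertex $\Psi_1(v,A)$ from which the $G_1$-direction is entirely unexplored; and (iii) uses a \emph{sprinkled} independent half of the Poisson process (${}^s\animals$ in~\eqref{def:sprinkling_and_branching_parts_of_animals}) to patch the gap vertices $v$ and $\Psi_1(v,A)$, ensuring connectivity at a fixed multiplicative cost $(1-e^{-\lambda/2})^2$. The exploration half ${}^b\animals$ then drives the branching, with offspring defined as those patched boundary vertices; the reproduction mean is controlled by the truncated $m_2^R$, which is taken large enough to beat all constants.

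Two smaller points: first, the paper truncates animal volumes at $R$ throughout and takes $R\to\infty$ only at the end, which keeps all moments finite during the argument; your direct use of the untruncated size-biased law with infinite mean is workable but less clean. Second, your worry about needing a lower bound ``not merely in expectation'' is unnecessary once the branching is set up correctly: the offspring are i.i.d.\ after the first generation, and mean $>1$ suffices for survival; no Paley--Zygmund is needed.
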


For the definition of the free product of transitive graphs, see Definition~\ref{def:free_product}, and for two examples, Figure~\ref{f.free}. The simplest example is a regular tree, and our result is new and interesting already there. The proof is again via an exploration process, with the main idea being that, even without an exact understanding of how new pieces can touch an already explored part of the cluster (which was provided by the notion of  random walk capacity in the case of worms), the free product structure provides enough independent tries in disjoint areas of the graph for a branching process argument. One subtlety here is that the two factors in the free product can behave very differently, and the infinite second moment of $\nu$ may correspond to growth only in one of the factors. We will handle this asymmetry by sprinkling.

\vskip 0.2 cm
\begin{figure}[htbp]
\SetLabels
(-0.1*0.2){$\Z_3 \star \Z_4$}\\
(1.05*0.2){$\Z \star \Z$}\\
\endSetLabels
\centerline{
\AffixLabels{
\includegraphics[width=3.3 in]{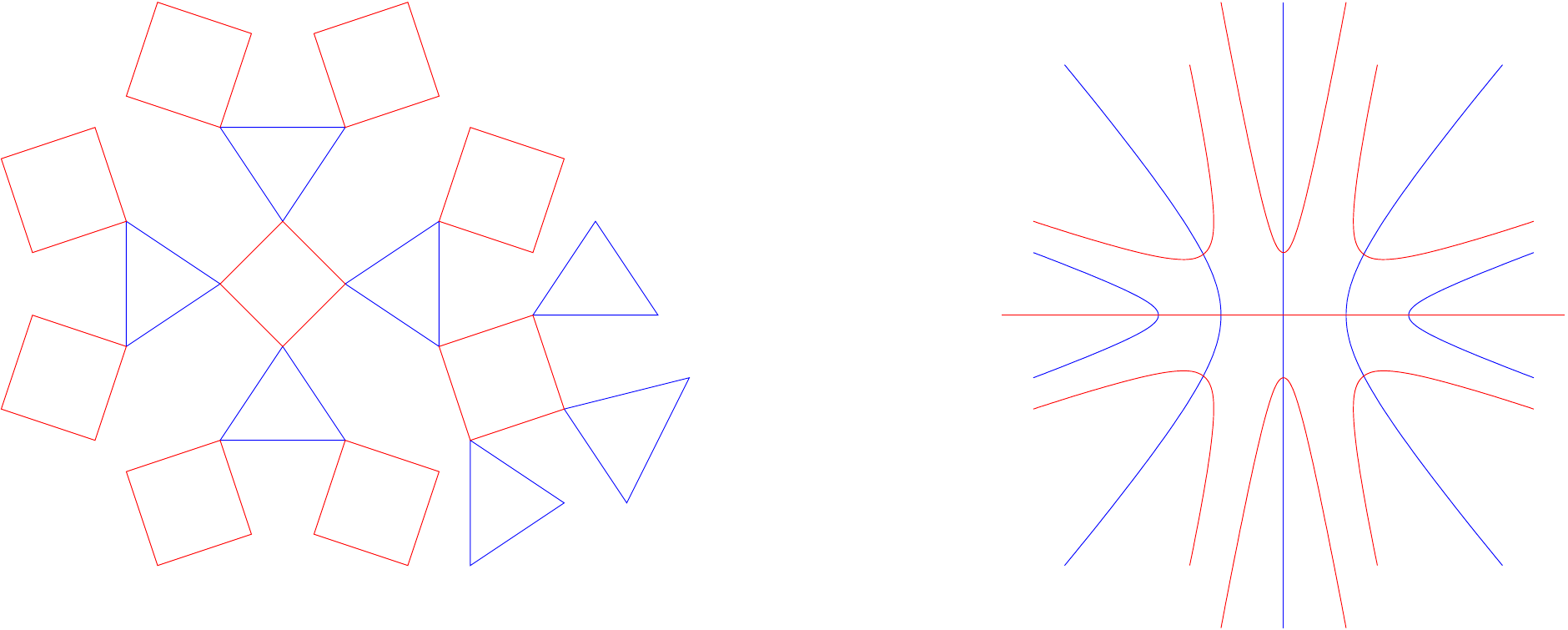}
}}
\caption{Two examples of free products of graphs.}
\label{f.free}
\end{figure}

The simplest joint generalization of our two theorems would be the following.

\begin{question}[General animals on general nonamenable groups]\label{q.general}
If  $\graph$ is \emph{any} nonamenable unimodular transitive graph, is  $\lambda_c=0$ for \emph{every} lattice animal measure with  $\E_\nu |H|^2=\infty$? 
\end{question}

For Bernoulli and many other percolation processes on nonamenable groups, besides the appearance of infinite clusters, there is typically a second important phase transition, which we can also define for the Poisson zoo:
$$
\lambda_u=\lambda_u(\graph,\nu) := \inf\left\{ \lambda > 0 :  \P\big(S_\nu^\lambda \textrm{ has a unique infinite cluster}\big)>0  \right\}.
$$
In the case of Bernoulli percolation (site or bond, does not matter), a well-known conjecture of Benjamini and Schramm \cite{BenjaminiSchramm1996} is that any nonamenable Cayley graph $\graph$ has $0< p_c(\graph) < p_u(\graph) \leq 1$; it has been proved in \cite{PSN,Thom} that any finitely generated nonamenable group $\grp$ has a finite generating set $S$ such that the Cayley graph $\graph=\Cay(\grp,S)$ satisfies this. Furthermore, one-ended Cayley graphs $\graph$ (meaning that removing any finite subset of the vertices results in a single infinite connected component) should always satisfy $p_u(\graph) < 1$, proved for finitely \emph{presented} ones \cite{BaB, Timar:cut}. 

For Bernoulli percolation and, more generally, any insertion tolerant invariant percolation, it was proved by Lyons and Schramm \cite{LySch} that infinite clusters, whenever they exist, are {\bf  indistinguishable} by $\grp$-invariant properties (e.g., either all infinite clusters are recurrent, or all of them are transient), and, as a consequence of this, uniqueness monotonicity holds: for every $p > p_u$, there is a unique infinite cluster. The Poisson zoo is not quite insertion tolerant, but is very close to it, and indistinguishability of infinite clusters is indeed proved in \cite{Damis}, also implying uniqueness for any $\lambda > \lambda_u(
\nu)$. 

Regarding the existence of an intermediate regime of non-unique infinite clusters, it was proved by Bowen \cite{Bowen2019}, motivated by the so-called dynamical von Neumann-Day problem \cite{GaboriauLyons}, that for any nonamenable Cayley graph and any $\veps>0$ there exists a worm measure $\nu$ with intensity $\lambda$ (called finitary interlacements there) such that the density is $\lambda \, \E_\nu |H| < \veps$ and there are infinitely many infinite clusters. 

However, there exist nonamenable Poisson zoos where there are no three phases, just one. A comment from Tom Hutchcroft, together with the results of \cite{RathRokob2022} and the indistinguishability of infinite clusters, easily imply the following, interesting for $d\ge 3$:

\begin{proposition}\label{p.immuniq}
There is a Poisson zoo $\nu$ on $G=\tree_\degree\times \Z^5$ with $\E_\nu|H|<\infty$ and $\lambda_u(G,\nu)=0$.
\end{proposition}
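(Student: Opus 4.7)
The plan is to take $\nu$ as a mixture of two animal laws adapted to the product structure of $G$: a slicewise-worm type that percolates the zoo within each $\Z^5$-slice for every $\lambda > 0$ via \cite{RathRokob2022}, together with an independent ``tree-edge'' type that activates a positive density of vertical edges between adjacent slices. The resulting slice clusters will fuse into a single infinite cluster of $G$, and indistinguishability from \cite{Damis} will then upgrade this to uniqueness on all of $G$.

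Writing $o = (o_T, o_Z)$, set $\nu = \tfrac12 \nu_Z + \tfrac12 \nu_T$, where $\nu_Z$ outputs $\{o_T\} \times W$ with $W$ a simple random walk on $\Z^5$ started at $o_Z$ of random length $L$, and $\nu_T$ outputs $\{o, (t', o_Z)\}$ for a uniformly random tree-neighbor $t'$ of $o_T$. Both laws are invariant under root-preserving automorphisms of $G$. Choose $L$ with $\E L < \infty$ but tail heavy enough to satisfy the \cite{RathRokob2022} condition for $\lambda_c = 0$ on $\Z^5$ (e.g.\ $\P(L \geq n) \asymp n^{-\alpha}$ with $1 < \alpha < 2$), so that $\E_\nu |H| < \infty$.

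Fix $\lambda > 0$. By Poisson splitting, $S_\nu^\lambda$ decomposes into independent zoos $S_Z$ and $S_T$ of intensity $\lambda/2$. The restriction of $S_Z$ to each slice $\{t\} \times \Z^5$ is a faithful copy of the $\Z^5$-worm zoo of intensity $\lambda/2$, which by \cite{RathRokob2022} percolates, and by a standard Burton--Keane argument for the quasi-insertion-tolerant zoo on the amenable graph $\Z^5$ contains a unique infinite cluster $C_t$ of density $\theta(\lambda) > 0$. Independently, $S_T$ activates each vertical edge $\{(t,z),(t',z)\}$ (for $t \sim t'$ in $\tree_\degree$) with probability $p(\lambda) > 0$. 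The three events $\{(t,z) \in C_t\}$, $\{(t',z) \in C_{t'}\}$, and ``the vertical edge at $z$ is active'' depend on disjoint Poisson processes and are jointly independent, so by $\Z^5$-ergodicity the set of $z$ on which all three hold has density $\theta(\lambda)^2 p(\lambda) > 0$ and is a.s.\ infinite. Each such $z$ merges $C_t$ with $C_{t'}$ in $G$; iterating along tree edges, all $C_t$ fuse into a single infinite cluster $\hat C \subset S_\nu^\lambda$ of $G$.

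To conclude, apply indistinguishability \cite{Damis} on the nonamenable unimodular graph $G$ to the $\grp$-invariant cluster property ``contains a vertex $v$ lying in an infinite component of $S_\nu^\lambda$ restricted to the slice of $v$''. Since $\hat C$ has this property, every infinite cluster of $S_\nu^\lambda$ must have it; but every slice-infinite-cluster vertex already lies in $\hat C$, so every infinite cluster intersects, and therefore equals, $\hat C$. This gives $\lambda_u(G, \nu) = 0$. The main obstacle I anticipate is the careful bookkeeping for the indistinguishability step: one must verify that the cluster property used is Borel and $\grp$-invariant in the sense required by \cite{Damis}. Since the slice through a vertex of $G = \tree_\degree \times \Z^5$ is canonically determined by the ambient product structure, I expect this to go through routinely.
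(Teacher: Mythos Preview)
Your proposal is correct and follows essentially the same strategy as the paper: slice-wise $\Z^5$ worm zoos with $\lambda_c=0$ from \cite{RathRokob2022}, Burton--Keane uniqueness within each slice by amenability, connecting adjacent slices via small animals, then indistinguishability from \cite{Damis} to rule out any other infinite clusters. The only real difference is cosmetic: the paper uses a single worm measure with a positive singleton atom $\tilde\nu_0(\{0\})=p>0$, letting those singletons both supply insertion tolerance for Burton--Keane and furnish the tree-direction connections, whereas you split these two roles between $\nu_Z$ and your explicit tree-edge measure $\nu_T$.

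One small gap to patch: your phrase ``quasi-insertion-tolerant'' is doing real work in the Burton--Keane step, and the Poisson zoo is not insertion tolerant in general. The cleanest fix is exactly what the paper does---require $\P(L=1)>0$ so that singletons occur with positive rate, making the slice zoo genuinely insertion tolerant. Your tail condition $\P(L\geq n)\asymp n^{-\alpha}$ is compatible with this, so just add it as an explicit assumption.
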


How widespread is this phenomenon of immediate uniqueness? This question brings us to a small detour, the next subsection,  concluding in Question~\ref{q.unique}.

\subsection{A naive motivation: groups with fixed price 1}\label{ss.cost}

Besides Question~\ref{q.general}, another motivation for this work (even though we will not make here real progress in this direction) comes from the theory of {\bf measurable cost} for groups, Cayley graphs, and probability measure preserving (p.m.p.)~group actions, introduced in \cite{Levitt} and studied by Gaboriau in several influential papers; see \cite{Gabo:free, Gabo:ICM}. It is an ergodic theoretic or probabilistic version of the rank of a group (which is the minimal number of generators required). Let us give here just the following two definitions, for Cayley graphs $\graph=\Cay(\grp,S)$ given by a finite generating set $S$ of $\grp$:
\begin{equation}\label{e.cost}
\cost(\grp,S) := \frac12 \inf \left\{ \E_\mu[\deg(o)] \,\left|\; 
\parbox{19em}{$\mu$ { is a }$\grp$-invariant probability measure on\\
connected spanning subgraphs of $\Cay(\Gamma,S)$} \right. \right\};
\end{equation}
\begin{equation}\label{e.costFIID}
\cost^*(\grp,S) := \frac12 \inf \left\{ \E_\mu[\deg(o)] \,\left|\; 
\parbox{19em}{$\mu$ is a factor of i.i.d.~(FIID) measure on\\
connected spanning subgraphs of $\Cay(\Gamma,S)$} \right. \right\}.
\end{equation}
Here, {\bf factor of i.i.d.~(FIID)} means that the measure $\mu$ is a $\grp$-equivariant measurable function $\psi$ of i.i.d.~variables $\xi=\{ \xi_v \sim \mathsf{Unif}[0,1] : v\in V(\graph) \}$ or $\{\xi_e \sim \mathsf{Unif}[0,1] : e\in E(\graph)\}$:
$$\psi(\xi) \sim \mu, \hskip 2 cm  \gamma \cdot \psi(\xi) = \psi(\gamma\cdot \xi), \quad  \forall \gamma \in \grp,$$ 
with the usual action of $\grp$ on a configuration $\xi$, namely, $(\gamma\cdot\xi)(e) = \xi(\gamma^{-1}(e))$ for every $e\in E$, or similarly for $v \in V$. Equivalently (but somewhat loosely), one has a measurable (i.e., locally approximable) coding map that decides the status of each edge $e$ (or vertex $v$) from the i.i.d.~variables viewed from $e$ (or $v$). See, e.g.,~\cite{Lyons} for an introduction to FIID processes. An example is taking i.i.d.~$\Bernoulli(p)$ bond percolation, then deleting every finite cluster. Another one is the Wired Minimal Spanning Forest, where, given $\{\xi_e \sim \mathsf{Unif}[0,1] : e\in E\}$, we delete the edge with the largest label in every cycle, including ``generalized cycles'' going through infinity; see, e.g., \cite[Section 11]{LyonsPeres2016}. An example of a FIID {\it site} percolation is the set $S^\lambda_\nu$ covered by our Poisson zoo:~(\ref{e.zoo}) is clearly a FIID  construction. 

The reason for singling out FIID measures in~(\ref{e.costFIID}) is that a finer version of cost can be defined for p.m.p.~group actions, so that the cost of a group is the infimum cost among all its free actions, while a theorem of Ab\'ert and Weiss \cite{AW} implies that FIID actions have the supremum cost. The {\bf fixed price} question of Gaboriau \cite{Gabo:free} is if all free actions of any given group actually have the same cost, and if it is also independent of the generating set: for any generating set $S$, we might have $\cost(\grp,S) = \cost^*(\grp,S)=\cost(\grp,\grp)$. 

It is well-known that infinite amenable Cayley graphs have fixed price 1; this follows, e.g., from \cite[Theorem 5.3]{BLPS:GAFA}. It was proved in \cite{Gabo:free} that free groups on $d$ free generators, with any finite generating set $S$, have fixed price $d$. But there also exist nonamenable Cayley graphs with fixed price 1: for instance,  $\cost^*(\grp_1 \times \grp_2,S)=1$ if either group is infinite. It is quite open exactly which groups have cost 1 or fixed price 1; another well-known question of Gaboriau is whether they are characterized by their {\bf first $\ell^2$-Betti number} being 0, which means having no non-constant harmonic functions with finite Dirichlet energy \cite[Section 10.8]{LyonsPeres2016}. As an important step in this direction, Hutchcroft and Pete \cite{HuPe} proved that infinite Kazhdan (T) groups have cost 1, but their construction is not at all FIID, hence fixed price remains open. In another recent breakthrough \cite{FMW}, using Ideal Poisson-Voronoi Tessellations, it is proved that lattices in higher rank Lie groups have fixed price 1. 

It was observed in \cite[Proposition 2.1]{HuPe} that if $\Cay(\grp,S)$ has, for any $\veps>0$, an invariant site or bond percolation with density at most $\veps$ and a unique infinite cluster, then $\cost(\grp,S)=1$. Moreover, if this {\bf arbitrarily sparse unique infinite cluster} can be given in a FIID way, then $\cost^*(\grp,S)=1$. Since the Poisson zoo is always a FIID measure, and, as our above results show, it often produces infinite clusters already at arbitrarily small densities, sometimes even a unique one, it makes sense to ask the following, aiming to generalize Proposition~\ref{p.immuniq}:

\begin{question}[FIID sparse unique cluster]\label{q.unique}\ 
\begin{enumerate}
\item[{\bf (a)}]  Give interesting examples of nonamenable Cayley graphs $\graph$ with FIID percolations with a sparse unique infinite cluster. 
\item[{\bf (b)}] In particular, does every Cayley graph with first $\ell^2$-Betti number being 0 admits, for \emph{every} $\veps>0$,  some lattice animal measure $\nu$ with $\E_\nu |H| < \infty$ such that $\lambda_u(\graph,\nu) \, \E_\nu |H| < \veps$? Or even a single lattice animal measure with $\lambda_u(\graph,\nu)=0$?
\end{enumerate}
\end{question}

Having FIID sparse infinite clusters, without the requirement of uniqueness, is not difficult:
\begin{itemize}
\item Consider i.i.d.~{Bernoulli percolation} on a Cayley graph at $p=p_c+\veps$. Assuming $\theta(p_c)=0$, proved for nonamenable Cayley graphs in \cite{BLPS:GAFA}, the density of infinite clusters is small. Delete all finite clusters.
\item Consider the {Wired Minimal Spanning Forest}, mentioned above. This is infinitely many one-ended trees for any nonamenable Cayley graph \cite[Theorem 11.12]{LyonsPeres2016}. Prune the leaves, repeatedly, $N$ times. For large enough $N$, the overall density will be small.
\item Take random interlacements \cite{Sznitman2010, DrewitzRathSapozhnikov2014}, the canonical Poisson point process of bi-infinite simple random walk trajectories, at low intensity. The set of covered points is a FIID process \cite{BorbenyiRathRokob}, for non-trivial reasons: as opposed to worms, a trajectory here does not have a starting point where the random choices for the entire trajectory could be made locally.
\end{itemize}
However, on nonamenable Cayley graphs, these constructions inherently give infinitely many infinite clusters: for Bernoulli percolation, this is the $p_c < p_u$ conjecture; for interlacements, this is proved in \cite{TeixeiraTykesson2013}. The Poisson zoo has a chance to yield unique sparse infinite clusters on many nonamenable groups, Proposition~\ref{p.immuniq} hopefully being only a first example.

Very recently, the preprint \cite{GrebReck} has strengthened \cite{FMW} to produce FIID sparse unique infinite clusters on lattices in higher rank Lie groups.

\subsection{Further related works}\label{ss.litera}

The introduction of the general Poisson zoo in \cite{RathRokob2022} was inspired by several Poisson soup percolation models, as follows. 

The first one is the Poisson Boolean model with random radii in $\R^d$, see, e.g., \cite{MeesterRoy1996, Gouere2008, DeMu, DeTa} and the many references therein, and with a fixed radius in hyperbolic space \cite{Tykesson2007, Tykesson2009}, where results similar to nonamenable Bernoulli percolation have been shown, such as $0<\lambda_c<\lambda_u<\infty$. Percolation results in spaces beyond $\R^d$ seem to stop here; there are some results about visibility of the ideal boundary in the vacant set \cite{BenjaminiJonassonSchrammTykesson2009, TykessonCalka2013}.

The second main family of models is random walk loop soups \cite{Lupu,ChangShapoznikov2016}, especially interesting for their relationships to level sets of the Gaussian Free Field, and the related models of finitary interlacements (a special case of the worms model), studied in \cite{Bowen2019, PYZ, CPZ22}.

Earlier results on permanent supercriticality, meaning $\lambda_c=0$, were \cite[Theorem 1.2]{TeixeiraUngaretti2017} on {ellipses percolation} in $\R^2$, and \cite[Remark 1.1 and Claim 1.2]{Chang2021} on Bernoulli hyper-edge percolation on $\Z^d, \, d \geq 3$.  In the recent preprint \cite{convexgrain}, for the Poisson Boolean model in $\R^d$ with quite general random convex shapes, criteria for $\lambda_c=0$ (called ``robustness'' there) were given. 

One may also consider Poisson soups of infinite objects. The {\bf random interlacements process} on $\Z^d$, already mentioned above, was introduced by Sznitman \cite{Sznitman2010}, as the local distributional limit of simple random walk on a $d$-dimensional torus $(\Z / N \Z)^d, \, d \geq 3$, where we run the walk up to times comparable to the volume of the torus, and let $N \to \infty$. The limit can be described as the trace of a Poisson point process on the space of labeled bi-infinite random walk  trajectories modulo time-shift, and it is a canonical object related to capacity. This description also works on any transient edge-weighted graph \cite{Teixeira2009}. A percolation phase transition that makes sense in this setting is whether all these infinite trajectories form a single infinite component, and it was proved in \cite{TeixeiraTykesson2013} that this is the case for all positive intensities if{f} the underlying graph is amenable. The harder direction here is the existence of a non-uniqueness phase in the nonamenable case, proved by dominating the interlacements set by a certain branching random walk. Another phase transition to study is percolation in the vacant set, a recent breakthrough on $\Z^d$ being \cite{DukeEquality}. To our nonamenable setting, more relevant is the proof of the existence of a supercritical phase for percolation in the vacant set in \cite[Theorem 5.1]{Teixeira2009}, using a branching process idea, somewhat similarly to our Theorem~\ref{t.free}. It was shown recently in \cite{SapoMu} that transient amenable transitive graphs always have at most one vacant infinite component. Using a coupling with the Gaussian Free Field, it is proved in \cite{DPR} that certain graphs, including all transient Cayley graphs of polynomial growth, have a supercritical phase for vacant percolation. To our knowledge, it is not known for general transient transitive graphs that there is a supercritical phase, nor that nonamenable graphs have a non-uniqueness phase. 
 
Another natural Poisson soup of infinite objects is the {\bf Poisson cylinder model}, defined in \cite{TykessonWindisch2012} as a Poisson point process on the space of lines in $\R^d$, where a multiplicative factor of the intensity measure determines the density of the lines and where every line in the process is taken as the axis of a bi-infinite cylinder of radius $1$. Phase transitions for vacant percolation were proved here. Connectedness of the occupied set for all positive intensities was proved in \cite{BromanTykesson2016}. One can also generalize the model to hyperbolic spaces $\mathbb{H}^d$ by changing the lines of $\R^d$ to geodesic, and ask if the permanent connectedness is a result of $\R^d$ being a ``small'' space. Indeed, it was proved in \cite{BromanTykesson2015} that in any $\mathbb{H}^d$, $d \geq 2$, there exists some critical intensity below which there are infinitely many infinite clusters, but above which there is a unique one. Let us note that the proof of the existence of a non-unique phase again uses a branching process construction. 

The abundance of independence in a Poisson point process simplifies many arguments, but one may wonder how much we lose by considering such processes instead of more traditional $\{0,1\}$-valued invariant percolations. In a recent paper \cite{FrosstromGantertSteif2024}, the authors call a $\{ 0,1 \}$-valued process on some (discrete) set {\bf Poisson representable} if it can be constructed as a union of subsets coming from a Poisson process on the collection of subsets, and prove, e.g., that all positively associated Markov chains on $\{0,1\}^\Z$ are Poisson representable, while the Ising model on $\Z^d$, $d\ge 2$, and on the complete graph, for certain temperatures, are not.

We close this subsection with the question that we find the most attractive one in the amenable setting. It is not addressed in~\cite{convexgrain}, being a borderline case there.

\begin{question}[Random rays in the plane]\label{q.Z2R2}
Let the lattice animal measure $\nu_0$ be a straight line segment of random length, started at $o$, in a uniform random direction. On $\Z^2$, this means one of the four lattice directions with probability $1/4$ each, but one could also look at a Poisson point process on $\R^2$ as starting points, directions given by $\mathsf{Unif}[0,2\pi)$ variables.  Does $\E_{\nu}|H|^2=\infty$ suffice for $\lambda_c(\nu)=0$?
\end{question}

A growth process for physical networks built on random rays in $\Z^2$ was defined in \cite{physnetw}, as a member of a large family of models. It was argued non-rigorously and by simulations that this model has a mean-field degree exponent, vaguely suggesting that also for Question~\ref{q.Z2R2} we could observe the ``mean-field'' behaviour, meaning that the finiteness of the second moment would decide about the phase transition.

\subsection{Structure of the paper}
\label{subsection:structure_of_the_paper}

The rest of the paper is organized as follows. 

In Section~\ref{section:setup}, we collect all the necessary definitions and notions needed either for the precise statements or the proofs of our main theorems. We define amenability, unimodularity, and the free product of transitive graphs in Subsection~\ref{ss.graphs}. We provide some basic definitions and lemmas on random walk capacity in Subsection~\ref{ss.rw}. The basics of the Poisson zoo, after the rushed Introduction, are given carefully in Subsection~\ref{subsection:poisson_zoo_on_transitive_graphs}. The proof strategies for Theorem~\ref{t.free} (general animals on free products) and Theorem~\ref{t.worms} (worms on general nonamenable unimodular graphs) are explained in Subsections~\ref{subsection:poisson_zoo_on_free_products} and~ \ref{subsection:random_length_worms_model_introduction}, respectively. The basic properties of Poisson point processes that will be used in the arguments are summarized in Subsection~\ref{subsection:point_processes_of_rooted_animals}.

In Section \ref{section:bounds_on_fat} we will study the first and second moments of the ``fat'', which is the newly found occupied set in each step of our exploration process, in either of the two settings. In Section~\ref{section:exploration_processes} we analyze the growth of our exploration processes, proving the main two theorems. Finally, in Section~\ref{s.immuniq}, we prove Proposition~\ref{p.immuniq} on immediate uniqueness for a zoo on $\tree_\degree \times \Z^5$.

\subsection{Acknowledgments}

We are grateful to Tom Hutchcroft for inspiring Proposition~\ref{p.immuniq}, and to Bal\'azs R\'ath for several useful conversations. Three people so far have independently suggested that the Poisson zoo should be renamed the Aquarium; we do not thank them \smiley{}.

This work was supported by the ERC Synergy Grant No. 810115--DYNASNET and by the Hungarian National Research, Development and Innovation Office, OTKA grant K143468.

\section{Preliminaries}
\label{section:setup}

Most of the definitions and theorems introduced here can be found in the union of \cite{LyonsPeres2016, Pete2023, Woess2000}. 

Some general notation that we will use are $\N := \{ 0, 1, \ldots  \}$ and $\N_{> 0} := \{ 1, 2, \ldots \}$ and their obvious modifications for $\R$. If we are given two real numbers $a, b \in \R$, then $a \wedge b := \min \{ a, b \}$ and $a \vee b := \max \{ a, b \}$.

\subsection{Graph theory definitions}\label{ss.graphs}

We will work on undirected, locally finite, connected, infinite graphs $\graph=(V,E)$, sometimes with a distinguished vertex $\origin \in V(\graph)$, referred to as the origin. We will use the shorthand $x \edge y$ to denote that the vertices $x, y \in V(\graph)$ are neighbours: $\{ x,y \} \in E(\graph)$.

For finite subsets we will use the notation $A \fsubset V(\graph)$. If $A \subseteq V(\graph)$, then the interior and exterior vertex boundaries of $A$ are 
\begin{equation}
	\label{def:vertex_boundaries}
	\intboundary A := \{ x \in A \, : \, \exists \, y \in A^c, \, x \edge y \}, \quad \text{ and } \quad
	\extboundary A := \{ x \in A^c \, : \, \exists \, y \in A, \, x \edge y \}.
\end{equation}
By the discrete closure of the set $A$ we mean $\overline{A} := A \cup \extboundary A$.

Distances with respect to the usual graph metric will be denoted by $\dist_{\graph}(\cdot, \cdot)$. Balls are denoted by 
\begin{equation}
	\ball(x,R) := \{ y \in V(\graph) \, : \, \dist_{\graph}(x,y) \leq R \}.
\end{equation}

An automorphism of $\graph$ is a bijection $V(\graph)$ to itself that preserves adjacency. The group of all automorphisms of $\graph$ is denoted by $\Aut(\graph)$. 
The graph is called transitive if $\Aut(\graph)$ acts transitively on $V(\graph)$. In such a case, the degrees of the vertices are constant, denoted by $\degree := \degree(\graph)$. Importantly to our work, edge transitivity does not follow; see, e.g., the first example on Figure~\ref{f.free}. The most important source of transitive graphs are Cayley graphs $\graph=\Cay(\grp,S)$ with some symmetric generating set $S = S^{-1} \subset \grp$, where $E(\graph):=\{ (x,x s) : x \in \grp, s\in S \}$. Here, $\grp$ itself acts transitively by $\gamma\cdot x := \gamma x$.

Recall the definition of the exterior vertex boundary from \eqref{def:vertex_boundaries}. By the Cheeger constant of the (infinite) graph $\graph$ we mean
\begin{equation}
	\label{def:Cheeger_constant}
	\Cheeger(\graph) :=
	\inf \left\{
		\frac{| \extboundary A| }{|A|} \, : \,
		A \subset V(\graph), \, 
		0 < |A| < \infty	
	\right\}.
\end{equation}
Intuitively, $\Cheeger(\graph)$ is an indicator of ``bottlenecks'' in the graph. The same notion could be defined using a different notion of boundary (interior vertex boundary or edge boundary), but since our graphs will always be locally finite and transitive, the difference between these notions is within constant multipliers. And this is what will matter: we call a graph $\graph$ {\bf amenable} if $\Cheeger(\graph) = 0$, and nonamenable if $\Cheeger(\graph) > 0$. 

The most notable example of amenable graphs are the Euclidean lattices $\Z^d$, $d \geq 1$.
On the other hand, it is easy to prove \cite[Exercise 6.1]{LyonsPeres2016} that for the $\degree$-regular tree $\tree_{\degree}$ we have $\Cheeger( \tree_{\degree} ) = \degree - 2 $, and hence it is nonamenable for $\degree > 2$.

The importance of the following mass conservation principle in percolation theory was noted in \cite{BLPS:GAFA}, following \cite{Haggstrom}. It will be crucial also for us.

\begin{definition}[Unimodularity]
	\label{def:unimodularity}
	Let $\grp$ be a transitive subgroup of automorphism of the graph $\graph$. If we have the {\bf mass transport principle (MTP)} 
	\begin{equation}
		\label{eq:MTP}
		\sum_{y \in V(\graph)} f(y,x) =
		\sum_{y \in V(\graph)} f(x,y)
	\end{equation}
	for any diagonally $\grp$-invariant $f  :  V(\graph) \times V(\graph) \longrightarrow [0, \infty]$ function and for any $x \in V(\graph)$, then we say that the action of $\grp$ is {\bf unimodular}.	If the action of $\Aut(\graph)$ is unimodular on $\graph$, then we simply say that $\graph$ is unimodular.
\end{definition}

Every Cayley graph is unimodular, since, for every diagonally $\grp$-invariant function $f$, 
$$\sum_{y \in \grp} f(y,x)=\sum_{y\in \grp} f(xy^{-1}y,xy^{-1}x)=\sum_{z\in\grp} f(x,z),$$ 
as $y\mapsto z:=xy^{-1}x$ is a bijection. Moreover, every amenable transitive graph is unimodular, as well (see, e.g., \cite[Proposition 8.13]{LyonsPeres2016}).

However, in the nonamenable case, there are non-unimodular transitive graphs. The simplest such graph is Trofimov's grandparent graph, obtained from a $\degree$-regular tree by distinguishing an end (an infinite ray starting at any fixed vertex), thinking of the neighbour of each vertex $x$ towards the end as the parent of $x$, then connecting every vertex to its (unique) grandparent; see \cite[Example 7.1 and Section 8.2]{LyonsPeres2016}. The automorphism group of this graph is just the subgroup of $\Aut(\tree_\degree)$ that fixes that distinguished end, which does not satisfy MTP: if $f(x,y)$ is 1 when $y$ is the parent of $x$, otherwise 0, then the outgoing mass is 1, while the incoming is $\degree$.

\medskip

Next, we give a detailed description of the free product of transitive graphs, following \cite[Chapter II. 9.C]{Woess2000}. Before the definition, let us note that we only deal with the free product of two graphs here, since that will be enough for our purposes. However, one can easily extend the definition for countably many terms (as it is done in \cite{Woess2000}).

\begin{definition}
	\label{def:free_product}
	Suppose that we are given two non-trivial connected transitive graphs $\graph_i = (V(\graph_i), E(\graph_i))$, $i = 1,2$, with disjoint vertex sets and some distinguished vertices $\origin_i$, $i = 1,2$. The {\bf free product}
	\begin{equation}
		\label{eq:free_product}
		\graph := (V(\graph), E(\graph)) :=
		\graph_1 \star \graph_2
	\end{equation}
	is constructed in the following way.
	The vertices of $\graph$ are all the finite {words}:
	\begin{equation}
		\label{def:vertices_of_free_product}
		V(\graph) :=  \left\{
		x_1 \ldots x_n \, \left| \, 
		\parbox{17em}{
			$n \in \N; \, x_j \in V(\graph_1) \cup V(\graph_2) \setminus \{ \origin_1, \origin_2 \}; \\
			x_k \in V(\graph_i) \implies x_{k+1} \notin V(\graph_i), \, i=1,2$
		}		
		\right. \right\} 
		\cup
		\left\{ \origin \right\}
	\end{equation}
	where $\origin$ denotes the {empty word}, the distinguished vertex of $\graph$, obtained by identifying each $\origin_i$, $i = 1,2$ with $\origin$.
	If we are given words $x = x_1 \ldots x_k \in V(\graph)$ and $y = y_1 \ldots y_m \in V(\graph)$ with $x_k \in V(\graph_i)$ and $y_1 \notin V(\graph_i)$, then $xy$ stands for the concatenation as words. In addition, for all $x \in V(\graph)$ we define $x \origin = \origin x = x$.
	Furthermore, adjacency in $\graph$ is given as follows: for $i = 1,2$, if $\{ x,y \} \in E(\graph_i)$, then $\{ zx, zy \} \in E(\graph)$ for all $z = z_1 \ldots z_{\ell} \in V(\graph)$, with $z_{\ell} \notin V(\graph_i)$.
\end{definition}

In words, we take copies of $\graph_i$, $i=1,2$ and glue them together by identifying $\origin_1$ and $\origin_2$ into a single vertex $\origin$. 
Then, inductively, at each vertex $v$ of $\graph_i$ attach a copy of $\graph_j$, $j \neq i$, in such a way that $v$ is identified with $\origin_j$ from the the new copy of $\graph_j$.
So, from any vertex of the product one can either go in the direction of the component $\graph_1$ or $\graph_2$, and these two subgraphs only meet in the given vertex.
Consequently, it is also meaningful to introduce the $\graph_i$-neighbourhood of a vertex $x \in V(\graph)$ as
\begin{equation}
	\label{def:free_product_term_neighbourhood}
	\mathcal{N}_i(x) := \left\{
	y \in V(\graph) \, : \,
	x \edge y \text{ in } \graph_i 
	\right\}
\end{equation}
for $i = 1,2,$.
That is, $\mathcal{N}_i(x)$ is the subset of those neighbours of $x$ that can be reached using a $\graph_i$-edge.

One well-known example of such a free product is the $\degree$-regular tree $\tree_{\degree}$, which is the free product of a single edge and $\tree_{\degree-1}$; or, iterating this, the $\degree$-wise free product of a single edge. In particular, $\tree_4$ is the free product of $\Z$ with itself, as on the second picture of Figure~\ref{f.free}. 

The most relevant properties of a free product can be summarized as the following collection of observations.
As their proofs are obvious from the definition, they are omitted.

\begin{claim}
	\label{claim:properties_of_free_product_of_graphs}
	\begin{enumerate}
		\item[]
		\item The free product of two connected, locally finite and transitive graphs is again connected, locally finite and transitive.
		\item Every vertex of a free product is a cutpoint: removing it results in at least two disjoint infinite subgraphs.
		\item Given any cycle in a free product, all of its edges are either from the first or the second component. That is, the free product cannot introduce new cycles.
	\end{enumerate}
\end{claim}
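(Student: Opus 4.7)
My plan is to verify each of the three assertions by direct inspection of the inductive construction.

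For (1), connectedness is immediate: every word $w = w_1 \cdots w_n$ is joined to $\origin$ by successively deleting its last letter, each deletion realized by a path inside the current factor copy. For local finiteness, a word $w$ with $w_n \in V(\graph_i)$ has $\deg_{\graph_i}(w_n)$ neighbors of the form $w_1 \cdots w_{n-1} y$ for each $y \edge w_n$ in $\graph_i$ (with $y = \origin_i$ giving the truncation $w_1 \cdots w_{n-1}$), plus $\deg_{\graph_j}(\origin_j)$ neighbors of the form $w \cdot u$ for $u \edge \origin_j$ in $\graph_j$, $u \neq \origin_j$, $j \neq i$; by factor transitivity this sum equals $\deg(\graph_1) + \deg(\graph_2)$ independently of $w$, and the same count handles $\origin$. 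Transitivity of $\graph$ itself is a routine verification: root-preserving factor automorphisms lift to $\graph$ by letter-wise action on the appropriate copy, and by anchoring such lifts along the letters of any target word $w$ one produces an automorphism carrying $\origin$ to $w$.

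For (3), I would assign each edge $\{zx,zy\}$ with $\{x,y\} \in E(\graph_i)$ to the \emph{copy} of $\graph_i$ rooted at the word $z$. The key structural fact is that any two distinct copies meet in at most one vertex of $\graph$; treating copies as vertices and their shared vertices as edges then yields a tree $T$. A cycle in $\graph$ induces, by grouping its edges into maximal same-copy runs, a closed walk $D_1 \to D_2 \to \cdots \to D_m \to D_1$ in $T$; if $m \geq 2$, this walk must backtrack at some step $D_{j-1} = D_{j+1}$ (since $T$ is a tree), and then the uniqueness of the meeting vertex between $D_{j-1}$ and $D_j$ forces the cycle to traverse that shared vertex twice, contradicting simplicity. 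Hence $m = 1$, i.e., all edges of the cycle lie in a single copy.

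Part (2) follows quickly from (3). Fix $v \in V(\graph)$; since each $\graph_i$ is non-trivial, $v$ has neighbors $u_1$ and $u_2$ lying in distinct copies through it. A path in $\graph - v$ from $u_1$ to $u_2$, together with the two edges $\{v,u_1\}$ and $\{v,u_2\}$, would form a cycle of $\graph$ using edges from both copies, contradicting (3); hence $v$ is a cutpoint separating $u_1$ from $u_2$. Infinitude of each resulting component follows by iterating the construction: the copy meeting $v$ on each side, minus $v$, carries further copies dangling from each of its other vertices, which carry still further copies, yielding arbitrarily long words on each side.

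The only step that requires any real care is the tree property of the copy graph used in (3); I expect this to be the main (though mild) obstacle. It is a standard feature of free products, provable by a clean induction on word length: each word belongs to exactly two copies (one of each factor), and each copy is rooted at a unique word.
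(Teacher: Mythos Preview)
Your proposal is correct and carefully argued. The paper itself does not prove this claim at all: immediately after stating it, the authors write ``As their proofs are obvious from the definition, they are omitted.'' So there is no approach to compare against; you have supplied a complete argument where the paper gives none. Your tree-of-copies device for part~(3), and the derivation of~(2) from~(3), are the standard way to make these ``obvious'' facts precise, and your degree count in~(1) matches exactly how the paper uses the free product structure later (e.g., in defining $\mathcal{N}_i(x)$).
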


\subsection{Random walks, spectral radius, capacity}\label{ss.rw}

Let us denote the space of all $V(\graph)$-valued infinite nearest neighbour trajectories indexed by nonnegative integers by $W := W(\graph)$. This countable space can be endowed with a natural $\sigma$-algebra $\mathcal{W}$, the one generated by the canonical coordinate maps.

\begin{definition}
	For $x \in V(\graph)$, let $P_x$ denote the law of the simple random walk $( X(n) )_{n \in \N}$ on $\graph$ which starts from the vertex $x$, i.e., $X(0) = x$. Let us also denote the corresponding expectation by $E_x$. The law $P_x$ can be considered as a probability measure on $(W, \mathcal{W})$.
\end{definition}


On any graph, simple random walk is a reversible Markov chain, with stationary measures proportional to the degrees. The chain is described by the transition probabilities
\begin{equation}
	\label{def:transition_probabilities_of_random_walks}
	p_n(x,y) := P_x(X(n) = y), \qquad x, y \in V, \, n \in \N.
\end{equation}

On a nonamenable graph, one expects that these transition probabilities decay fast with $n$. This is indeed the case, and to argue why, let us introduce the notion of spectral radius of a graph; note that it does not matter which starting vertex we choose, due to connectedness.

\begin{definition}
	\label{def:spectral_radius}
	The {\bf spectral radius} of the graph $\graph$ is the quantity
	\begin{equation}
		\label{eq:spectral_radius}
		\spectral(\graph) := 
		\limsup_{n \to \infty}
		p_n(\origin, \origin)^{1 \slash n}.
	\end{equation}
\end{definition}

\noindent
Obviously, $\spectral(\graph) < 1$ means that $p_n(\cdot, \cdot)$ decreases exponentially fast. The celebrated result of Kesten, Cheeger, Dodziuk and Mohar (see \cite[Theorem 10.3]{Woess2000} or \cite[Theorem 6.7]{LyonsPeres2016} or \cite[Theorem 7.3]{Pete2023}) tells us that this property characterizes nonamenability, defined at~(\ref{def:Cheeger_constant}):
\begin{equation}
	\label{eq:relation_of_spectral_radius_and_Cheeger_constant}
	\spectral(\graph) < 1 
	\quad \Longleftrightarrow \quad
	\Cheeger(\graph) > 0.
\end{equation}

Some random stopping times we will use extensively throughout the analysis of the random length worms model are as follows. For any $w \in W$ and $K \fsubset V(\graph)$, let
\begin{align}
	\label{nota:entrance_time}
	T_K(w) & := \inf \{ \, n \geq 0 \, : \, w(n) \in K\, \}, \quad \text{first entrance time}, \\
	\label{nota:hitting_time}
	T_K^+(w) & := \inf \{ n \geq 1 \, : \, w(n) \in K \}, \quad \text{first hitting time},
\end{align}
where we define $\inf \emptyset  = + \infty$. 
To simplify notation, the case of sets that consist of only one vertex will be denoted by $T_x(w) := T_{\{ x \}}(w)$ and $T_x^+(w) := T_{\{ x \}}^+(w)$.
With the latter in hand, let us recall that the graph $\graph$ is called {\bf recurrent} if 
\begin{equation}
	\label{def:recurrence_of_graphs}
	P_{\origin} \left( T_{\origin}^+(X) < + \infty \right) = 1,
\end{equation}
and {\bf transient} otherwise, which does not depend on the chosen vertex for a connected graph, and hence is a well-defined property of a graph.

It is an easy fact that a nonamenable graph is always transient: (\ref{eq:relation_of_spectral_radius_and_Cheeger_constant}) implies that 
$$E_o \big|\{n \geq 0 : X(n)=o\}\big| = \sum_{n\ge 0} p_n(o,o) < \infty,$$ 
hence the number of returns to $o$ is almost surely finite, hence~(\ref{def:recurrence_of_graphs}) fails. The next theorem, a more general version of which can be found in \cite[Propositon 9.3]{Pete2023} or \cite[Proposition 6.9]{LyonsPeres2016}, says much more.

\begin{theorem}
	\label{thm:positive_constant_speed_of_SRW_on_nonamenable_transitive_graphs}
	Given a nonamenable and transitive graph $\graph$, there exists a constant $c(\graph) > 0$ such that 
	\begin{equation}
		\label{eq:positive_constant_speed_of_SRW_on_nonamenable_transitive_graphs}
		P_{\origin} \left( \liminf_{t \to \infty} \frac{\dist_{\graph}(\origin, X(t))}{t} \geq c(\graph)  \right) = 1\,.
	\end{equation} 
\end{theorem}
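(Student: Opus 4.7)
The plan is to combine Kesten's theorem, recorded as~\eqref{eq:relation_of_spectral_radius_and_Cheeger_constant}, which gives $\spectral(\graph) < 1$ for any nonamenable transitive $\graph$, with the crude volume bound $|\ball(\origin, R)| \leq \degree^{R+1}$ for a $\degree$-regular graph, and conclude via a union bound and the Borel--Cantelli lemma. The constant $c(\graph) > 0$ will be any number small enough that $\degree^{c(\graph)} \cdot (\spectral(\graph) + \veps) < 1$ for some fixed $\veps > 0$ with $\spectral(\graph) + \veps < 1$.

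First I would establish the pointwise heat-kernel bound $p_n(\origin, y) \leq p_n(\origin, \origin)$ for every $y \in V(\graph)$ and every $n$. Since $\graph$ is transitive, simple random walk has uniform stationary measure and is reversible, so $p_m(x, z) = p_m(z, x)$. Splitting via Chapman--Kolmogorov and applying Cauchy--Schwarz gives
$$p_n(\origin, y) = \sum_{z \in V(\graph)} p_{n/2}(\origin, z)\, p_{n/2}(z, y) \leq \sqrt{p_n(\origin, \origin)\cdot p_n(y, y)} = p_n(\origin, \origin),$$
where the last equality uses transitivity to identify all diagonal return probabilities (an odd $n$ is handled by a one-step convolution at the cost of an absorbed multiplicative constant). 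Combined with Definition~\ref{def:spectral_radius}, this yields that, for any such $\veps > 0$ and all sufficiently large $n$,
$$p_n(\origin, y) \leq (\spectral(\graph) + \veps)^n \qquad \text{for every } y \in V(\graph).$$

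A single union bound over $\ball(\origin, ct)$ then gives, for all large $t$,
$$P_{\origin}\bigl(\dist_{\graph}(\origin, X(t)) \leq c t\bigr) = \sum_{y \in \ball(\origin, c t)} p_t(\origin, y) \leq \degree^{c t + 1}\,(\spectral(\graph) + \veps)^t,$$
which, by our choice of $c = c(\graph)$ and $\veps$, decays exponentially in $t$ and is therefore summable. The Borel--Cantelli lemma implies that $P_{\origin}$-almost surely only finitely many $t \in \N$ satisfy $\dist_{\graph}(\origin, X(t)) \leq c t$, which is exactly~\eqref{eq:positive_constant_speed_of_SRW_on_nonamenable_transitive_graphs}. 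There is essentially no conceptual obstacle: nonamenability enters only through Kesten's theorem, and transitivity only through the heat-kernel comparison step. The argument would fail on a general bounded-degree graph, where one has merely $p_n(\origin, y) \leq \sqrt{p_n(\origin,\origin)\,p_n(y,y)}$ and the diagonal term at $y$ could carry geometric information we do not control.
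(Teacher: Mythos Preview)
The paper does not supply its own proof of Theorem~\ref{thm:positive_constant_speed_of_SRW_on_nonamenable_transitive_graphs}; it is quoted with references to \cite[Proposition~9.3]{Pete2023} and \cite[Proposition~6.9]{LyonsPeres2016}. Your argument is correct and is the standard route: the Cauchy--Schwarz inequality plus reversibility and transitivity give the uniform heat-kernel bound $p_n(\origin,y)\le p_n(\origin,\origin)$ (with the one-step convolution for odd $n$ costing only a factor $(\spectral+\veps)^{-1}$), Kesten's theorem~\eqref{eq:relation_of_spectral_radius_and_Cheeger_constant} turns this into $p_n(\origin,y)\le C(\spectral+\veps)^n$, and the crude volume bound plus Borel--Cantelli finishes. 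This is essentially how the cited references argue as well, so there is nothing to contrast.
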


\noindent
In words, for nonamenable (and transitive) graphs, the distance between the start and the end points of a random walk running for some long enough time is almost surely comparable to the number of steps it took. Obviously, this distance provides a lower bound on the size of the trace of the walker, hence one can derive the following corollary.

\begin{corollary}
	\label{coro:number_of_steps_and_size_of_trace_of_SRW_are_comparable_on_nonamenable_graphs}
	Given a nonamenable and transitive graph $\graph$, for any $\veps > 0$, there exists $c(\veps) > 0$ such that for any $t > 0$ we have
	\begin{equation}
		\label{eq:number_of_steps_and_size_of_trace_of_SRW_are_comparable_on_nonamenable_graphs}	
		P_{\origin} \Big(
			\big| X[0, t] \big| \geq c(\veps) \, t
		\Big) > 1 - \veps.
	\end{equation}
\end{corollary}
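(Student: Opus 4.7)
The plan is to derive Corollary~\ref{coro:number_of_steps_and_size_of_trace_of_SRW_are_comparable_on_nonamenable_graphs} directly from Theorem~\ref{thm:positive_constant_speed_of_SRW_on_nonamenable_transitive_graphs} via the elementary geometric inequality
$$|X[0,t]| \geq \dist_{\graph}(\origin, X(t)) + 1,$$
which holds because the trace of any nearest-neighbour walk from $\origin$ to $X(t)$ must contain some $\origin$-to-$X(t)$ path. Combined with the positive-speed statement, this immediately gives $\liminf_{t \to \infty} |X[0,t]|/t \geq c(\graph)$ almost surely under $P_{\origin}$.

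The remaining task is to turn this asymptotic almost-sure bound into a uniform-in-$t$ statement holding with probability at least $1-\veps$. I would show that the random variable $Y := \inf_{t \in \N_{> 0}} |X[0,t]|/t$ is almost surely strictly positive. Indeed, let $T^\star$ be the (random) threshold beyond which $\dist_{\graph}(\origin, X(t))/t \geq c(\graph)/2$; such a $T^\star$ is a.s.\ finite by Theorem~\ref{thm:positive_constant_speed_of_SRW_on_nonamenable_transitive_graphs}. For $t \geq T^\star$ the inequality above gives $|X[0,t]|/t \geq c(\graph)/2$, while for $1 \leq t \leq T^\star$ we trivially have $|X[0,t]|/t \geq 1/T^\star$, since the trace always contains at least $\origin$ itself. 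Thus
$$Y \geq \min\!\bigl(c(\graph)/2,\; 1/T^\star\bigr) > 0 \quad P_{\origin}\text{-a.s.}$$

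To conclude, given $\veps > 0$, apply continuity of probability from below to the increasing events $\{ Y \geq 1/k \}$, $k \in \N_{>0}$, whose union has probability $1$: there is some $k_0$ with $P_{\origin}(Y \geq 1/k_0) > 1 - \veps$, and setting $c(\veps) := 1/k_0$ delivers the claim for every $t \in \N_{>0}$ simultaneously.

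The proof is essentially routine; the only mild obstacle is the passage from an almost-sure liminf statement to a bound holding uniformly in $t$ with controlled failure probability, which is handled by absorbing the ``short-time'' regime $t \leq T^\star$ into the trivial lower bound $1/T^\star$ and then invoking the standard tail fact for a.s.\ positive random variables.
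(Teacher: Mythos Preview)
Your proof is correct and follows exactly the route the paper implicitly intends: the text preceding the corollary says only that ``this distance provides a lower bound on the size of the trace of the walker, hence one can derive the following corollary,'' leaving the reader to fill in precisely the details you provide. Your handling of the passage from the almost-sure liminf statement to a bound uniform in $t$ (via the a.s.\ finite threshold $T^\star$ and the trivial bound $|X[0,t]|\geq 1$ for small $t$) is the standard way to complete this, and there are no gaps.
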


Given $K \fsubset V(\graph)$ and $x \in K$, let us define the {\bf equilibrium measure} $e_K(x)$ of $x$ with respect to $K$ by
\begin{equation}
	\label{def:equilibrium_measure}
	e_K(x) := P_x \left( T_K^+ = \infty \right),
\end{equation}
and the {\bf capacity} of $K$ by
\begin{equation}
	\label{def:capacity}
	\capacity(K):=\sum_{x \in K} e_K(x).
\end{equation}
Obviously, the equilibrium measure is concentrated on the interior vertex boundary of a set. Due to this and the reversibility of the simple random walk, heuristically speaking, the capacity of a set measures the visibility of the set from the point of view of a random walk coming from infinity: the larger the capacity, the more visible the set is.
As an example, it follows immediately from the definitions that the capacity of a one-element set is
\begin{equation}
	\label{eq:capacity_of_one_vertex}
	\capacity(\{ x \}) = 
	P_{x}( \hitTime_x = \infty ) = 1 \slash \Green(x, x)
\end{equation}
for every $x \in V(\graph)$, where $\Green(x,y) := \sum_{n = 0}^{\infty} p_n(x,y)$ is {\bf Green's function}. 

Although there are many properties of  capacity that can be used to examine the behaviour of random walks, in our context we will be interested only in one of them. Namely, on nonamenable graphs, the capacity of a set is comparable to its size, which follows, for example, from \cite[Eq.~(4.4)]{Teixeira2009} or \cite[Lemma 2.1]{BenjaminiNachmiasPeres2011}.

\begin{lemma}[Capacity is linear]
	\label{lemma:capacity_bound}
	For any $K \fsubset V$ we have 
	\begin{equation}
		\label{eq:capacity_bound}
		(1 - \spectral(\graph)) \cdot |K| \leq
		\capacity(K) \leq
		|K|,
	\end{equation}
	where recall that $\spectral(\graph)$ denotes the spectral radius of the graph. In particular, by \eqref{eq:relation_of_spectral_radius_and_Cheeger_constant}, on a nonamenable graph $\graph$ the capacity and the cardinality of a finite vertex set are comparable.
\end{lemma}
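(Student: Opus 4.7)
The plan is as follows. The upper bound $\capacity(K) \leq |K|$ is immediate from definition \eqref{def:capacity}, since every $e_K(x) \in [0,1]$.

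For the lower bound, the strategy is to combine a Dirichlet-type variational principle for capacity with a spectral estimate on the Green function. A last-exit decomposition of the simple random walk gives, for every $x \in K$, the identity
\begin{equation*}
	1 \, = \, \sum_{y \in K} \Green(x,y) \, e_K(y),
\end{equation*}
since decomposing on the last time $n$ the walk visits $K$ contributes $p_n(x,y) \, e_K(y)$ for each $y \in K$, and these sum to $P_x(T_K < \infty) = 1$. Equivalently, this is the classical energy-minimization principle $1/\capacity(K) = \inf_\mu \sum_{x,y \in K} \Green(x,y) \mu(x) \mu(y)$, with the infimum taken over probability measures $\mu$ supported on $K$ and attained at $\mu = e_K/\capacity(K)$. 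Testing this inequality against the uniform probability measure $\mu = \mathbf{1}_K/|K|$ on $K$ yields the variational lower bound
\begin{equation*}
	\capacity(K) \, \geq \, \frac{|K|^2}{\sum_{x,y \in K} \Green(x,y)}.
\end{equation*}

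The denominator is then controlled by spectral theory. Since $\graph$ is transitive and hence regular, the simple random walk operator $P$ is self-adjoint on $\ell^2(V)$ with counting measure, and its operator norm equals $\spectral(\graph)$ by the very definition of the spectral radius. When $\spectral(\graph) < 1$ (the nonamenable case, by \eqref{eq:relation_of_spectral_radius_and_Cheeger_constant}), functional calculus applied to $P$, whose spectrum lies in $[-\spectral(\graph), \spectral(\graph)]$, shows that the Green operator $\sum_{n \geq 0} P^n = (I-P)^{-1}$ has operator norm at most $1/(1 - \spectral(\graph))$. Cauchy--Schwarz on $\ell^2(V)$ then gives
\begin{equation*}
	\sum_{x, y \in K} \Green(x,y) \, = \, \langle (I-P)^{-1} \mathbf{1}_K, \, \mathbf{1}_K \rangle \, \leq \, \|(I-P)^{-1}\|_{\mathrm{op}} \cdot \|\mathbf{1}_K\|_2^2 \, \leq \, \frac{|K|}{1 - \spectral(\graph)},
\end{equation*}
and combining the two displays yields $\capacity(K) \geq (1 - \spectral(\graph))|K|$, as desired. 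When $\graph$ is amenable, $\spectral(\graph)=1$ and the statement reduces to the trivial $\capacity(K) \geq 0$, so there is nothing to check in that regime. The only mildly delicate ingredient is the spectral estimate $\|(I-P)^{-1}\|_{\mathrm{op}} \leq 1/(1-\spectral(\graph))$, which is a standard application of the spectral theorem for self-adjoint operators; every other step is routine.
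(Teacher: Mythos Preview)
Your proof is correct. The paper does not actually prove this lemma; it simply cites \cite[Eq.~(4.4)]{Teixeira2009} and \cite[Lemma 2.1]{BenjaminiNachmiasPeres2011} and moves on. Your argument --- the last-exit identity $\sum_{y\in K}\Green(x,y)e_K(y)=1$, the variational inequality $\capacity(K)\ge |K|^2/\sum_{x,y\in K}\Green(x,y)$, and the spectral bound $\langle (I-P)^{-1}\ind_K,\ind_K\rangle\le |K|/(1-\spectral(\graph))$ --- is exactly the standard route and is essentially what those references do. One minor point: you invoke the identity $\|P\|_{\mathrm{op}}=\spectral(\graph)$, where the right-hand side is defined in the paper via $\limsup p_n(o,o)^{1/n}$; this equality is Kesten's theorem for self-adjoint transition operators and is implicit in the paper's citation of \cite[Theorem 10.3]{Woess2000} around \eqref{eq:relation_of_spectral_radius_and_Cheeger_constant}, so you are on safe ground.
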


In the proof of Theorem~\ref{t.worms}, we will also use the following restricted version of capacity.

\begin{corollary}
	\label{coro:lower_bound_on_partial_square_term_capacity}
	Let $A \fsubset V(\graph)$ be finite and connected, $B \subseteq \extboundary A$. Then we have
	\begin{equation}
		\label{eq:lower_bound_on_partial_square_term_capacity}
		\sum_{y \in B} P_y^2 \left( T_{\overline{A}}^+ = +\infty \right) \geq
		(1 - \spectral(\graph))^2 \cdot \left| \overline{A} \right| - ( | \extboundary A| - |B|).
	\end{equation}
\end{corollary}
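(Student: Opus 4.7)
The plan is to reduce this restricted bound to the unrestricted capacity lower bound of Lemma~\ref{lemma:capacity_bound} via Cauchy--Schwarz, and then pay for the restriction to $B$ using the trivial bound $q_y \leq 1$ on the excluded vertices.

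Write $q_y := P_y(T_{\overline{A}}^+ = +\infty)$, so that $q_y = e_{\overline{A}}(y)$ for $y \in \overline{A}$. A preliminary observation is that $q_y$ vanishes outside $\intboundary \overline{A}$: for any $y \in \overline{A} \setminus \intboundary \overline{A}$ all neighbors already lie in $\overline{A}$, forcing $T_{\overline{A}}^+ \leq 1$ almost surely. Consequently $\capacity(\overline{A}) = \sum_{y \in \intboundary \overline{A}} q_y$, and the support of $e_{\overline{A}}$ sits inside $\intboundary\overline{A} \subseteq \extboundary A$ (since any $y \in A$ also has all its neighbors in $\overline{A}$).

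Next, applying Cauchy--Schwarz to $\sum_{y \in \intboundary\overline{A}} q_y \cdot 1$ yields
\begin{equation*}
\capacity(\overline{A})^2 \leq |\intboundary \overline{A}| \cdot \sum_{y \in \intboundary\overline{A}} q_y^2 \leq |\overline{A}| \cdot \sum_{y \in \intboundary\overline{A}} q_y^2,
\end{equation*}
which, combined with the lower bound $\capacity(\overline{A}) \geq (1-\spectral(\graph))|\overline{A}|$ from Lemma~\ref{lemma:capacity_bound}, gives the unrestricted estimate
\begin{equation*}
\sum_{y \in \intboundary\overline{A}} q_y^2 \geq (1-\spectral(\graph))^2 \, |\overline{A}|.
\end{equation*}

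Finally, to descend from $\intboundary\overline{A}$ to $B$, split the sum as $\sum_{y \in B \cap \intboundary\overline{A}} q_y^2 + \sum_{y \in \intboundary\overline{A} \setminus B} q_y^2$. The first piece equals $\sum_{y \in B} q_y^2$ because $q_y$ vanishes outside $\intboundary \overline{A}$, and the second piece is bounded via $q_y \leq 1$ by $|\intboundary\overline{A} \setminus B| \leq |\extboundary A| - |B|$; rearranging gives exactly \eqref{eq:lower_bound_on_partial_square_term_capacity}. There is no serious obstacle in this argument; the one point worth emphasizing is that Cauchy--Schwarz is tight enough to upgrade the linear bound on $\capacity(\overline{A})$ to a quadratic bound on $\sum q_y^2$ precisely because the support of $e_{\overline{A}}$ has cardinality at most $|\overline{A}|$.
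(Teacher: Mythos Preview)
Your proof is correct and follows essentially the same approach as the paper: both apply Cauchy--Schwarz to pass from the linear capacity bound $\capacity(\overline{A}) \geq (1-\spectral(\graph))|\overline{A}|$ of Lemma~\ref{lemma:capacity_bound} to the quadratic bound $\sum q_y^2 \geq (1-\spectral(\graph))^2 |\overline{A}|$, then subtract off the excluded vertices using $q_y \leq 1$. The only cosmetic difference is that you first identify the support $\intboundary\overline{A}$ and split there, whereas the paper splits over $\extboundary A$ and then silently extends to $\overline{A}$ inside the Cauchy--Schwarz step; both rely on the same observation that $q_y$ vanishes on $A$.
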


\begin{proof}
	Since $B \subseteq \extboundary A$, the sum can be ``decomposed'' as the subtraction
	\begin{equation*}
		\sum_{y \in B} P_y^2 \left( T_{\overline{A}}^+ = + \infty \right) =
		\sum_{y \in \extboundary A} P_y^2 \left( T_{\overline{A}}^+ = + \infty \right) - 
		\sum_{y \in \extboundary A \setminus B} P_y^2 \left( T_{\overline{A}}^+ = + \infty \right).
	\end{equation*}
 For the first term, we have
	\begin{equation*}
		\sum_{y \in \extboundary A} P_y^2 \left( T_{\overline{A}}^+ = + \infty \right) \geq 
		\left( 1 - \spectral(\graph) \right)^2 \cdot \left| \overline{A} \right|,
	\end{equation*}
	since, by the Cauchy-Schwarz inequality, we can write and then rearrange the following:
	\begin{align*}
		& |\overline{A}| \cdot 
		\sum_{y \in \extboundary A} P_y^2 \left( T_{ \overline{A}}^+ = + \infty \right) = 
		\left( \sum_{x \in \overline{A}} 1^2 \right) \cdot \left( \sum_{y \in \overline{A}} P_y^2 \left( T_{ \overline{A}}^+ = + \infty \right) \right) \geq \\ & \qquad \geq 
		\left( \sum_{x \in \overline{A}} P_y \left( T_{\overline{A}}^+ = +\infty \right) \right)^2 = 
		 \capacity \left( \overline{A} \right)^2 \overset{\eqref{lemma:capacity_bound}}{\geq} 
		(1- \spectral(\graph))^2 \cdot |\overline{A}|^2.
	\end{align*}
	Meanwhile, for the second term, one can use the trivial upper bound
	\begin{align*}
		\sum_{y \in \extboundary A \setminus B} P_y^2 \left( T_{\overline{A}}^+ = + \infty \right) \leq 
		\left| \extboundary A \right| - \left| B \right|.
	\end{align*}
Subtracting the two bounds finishes the proof.
\end{proof}

\subsection{The Poisson zoo on transitive graphs}
\label{subsection:poisson_zoo_on_transitive_graphs}

We now introduce the Poisson zoo carefully.


\begin{definition}
	\label{def:lattice_animal}
	Given any vertex $x \in V(\graph)$, let $\latticeanimals_x$ be the set of finite, connected, nonempty subsets $H$ of $V$ that contain $x$, called the lattice animals rooted at $x$.
\end{definition}

Let $\grp \subgrp \Aut(\graph)$ be a subgroup of the automorphism group of the graph, which still acts transitively on $V$.
Obviously, the action of an automorphism $\varphi \in \grp$ can be extended to any set of vertices $H \subseteq V(\graph)$ as
$\varphi(H) := \left\{ \varphi(h) \, : \, h \in H \right\}$. Moreover, if $x,y \in V(\graph)$ and $\varphi \in \grp$ is such that $\varphi(x) = y$, then $\varphi$ defines a bijection between $\latticeanimals_x$ and $\latticeanimals_y$.

Given a vertex $x \in V(\graph)$, the stabilizer of $x$ under the action of $\grp$ is the subgroup
\begin{equation}
	\label{def:stabilizer_subgroup}
	\grp_{x} := \left\{ \varphi \in \grp \, : \, \varphi (x) = x \right\} \subgrp \grp.
\end{equation}

\begin{definition}
	\label{def:grp-rotation-invariant_prob_measure}
	We say that a probability measure $\nu_{\origin}$ on $\latticeanimals_{\origin}$ is ($\grp$-)rotation invariant if, for any $\varphi \in \grp_{\origin}$ and any $H \in \latticeanimals_{\origin}$, we have
$\nu_{\origin}( \varphi(H) ) = \nu_{\origin}(H)$.
\end{definition}

A class of examples is when the measure of an animal depends only on its cardinality.

Since $\grp$ acts transitively on $\graph$, for all vertices $x \in V(\graph)$ 
\begin{equation}
	\label{def:the_distinguished_automorphism_taking_origin_to_x}
	\text{there exists at least one \linebreak automorphism $\varphi_x \in \grp$ such that $\varphi_x (\origin) = x$.}
\end{equation}
As a consequence, if we are given a $\grp$-rotation invariant measure $\nu_{\origin}$ on $\latticeanimals_{\origin}$, we can push it forward with $\varphi_x$ to obtain a $\grp$-rotation invariant measure on $\latticeanimals_x$ for any $x \in V$:
\begin{equation}
	\label{def:translated_prob_measure}
	\nu_x := \nu_{\origin} \circ \varphi_{x}^{-1}.
\end{equation}
Moreover, if $\gamma \in \grp$ is such that $\gamma (\origin) = x$, then for any $H \in \latticeanimals_{\origin}$ we have $\gamma(H) \in \latticeanimals_{x}$ and $\nu_x( \gamma(H) ) = \nu_{\origin} ( \varphi_{x}^{-1} \gamma(H) ) = \nu_{\origin} (H)$, since $\varphi_x^{-1} \gamma \in \grp_{\origin}$ and $\nu_o$ is rotation-invariant. This also means that all automorphisms $\varphi_x$  in (\ref{def:translated_prob_measure}) give the same measure $\nu_x$.

\begin{definition}
	\label{def:poisson_zoo}
	Given the $\grp$-rotation invariant measure $\nu := \nu_{\origin}$ and some intensity parameter $\lambda \in \R_{>0}$, let us consider independent Poisson random variables $N_{x,H}^{\lambda} \sim \POI( \lambda \cdot \nu_x(H) )$ indexed by $x \in V(\graph)$ and $H \in \latticeanimals_x$.
	We say that $N_{x,H}^{\lambda}$ is the number of copies of the animal $H$ (rooted at $x$).
	We call the collection of random variables
	\begin{equation}
		\label{eq:poisson_zoo}
		\mathcal{N}_{\nu}^\lambda := 
		\left\{ 
			N^\lambda_{x,H} \, : \, x \in V(\graph), H \in \latticeanimals_x 
		\right\}
	\end{equation}
	the {\bf Poisson zoo} at level $\lambda$ (with measure $\nu$) and the random set
	\begin{equation}
		\label{def:trace_of_poisson_zoo}
		\mathcal{S}_{\nu}^\lambda := 
		\bigcup_{x \in V} \bigcup_{H \in \latticeanimals_x} \bigcup_{i = 1}^{ N_{x,H}^{\lambda} } \, H
	\end{equation}
	the {\bf trace} of the Poisson zoo at level $\lambda$.
\end{definition}

This is the same definition as (\ref{e.zoo}) in the Introduction, via $N^\lambda_x := \sum_{H \in \latticeanimals_x} N_{x,H}^{\lambda}$. Note also that, since the set of possible animals is countable, there is a positive chance for the occurrence of multiple instances of the same animal with the same root. Although this seems redundant and might suggest to the reader that another distribution should be used to define the model, in the small intensity regime that we are mostly focusing on, i.e., when $\lambda$ is close to $0$, the probability of such events is negligible. Meanwhile, as we will see later in Section~\ref{section:setup}, having a Poisson distribution is a very useful tool in the analysis of the model.

We say that a subset $S \subseteq V(\graph)$ of sites percolates if $S$ has at least one infinite connected component in the induced subgraph. From the rotational invariance of $\nu$ and the factor of i.i.d.~construction (\ref{def:trace_of_poisson_zoo}), it follows immediately that
\begin{equation}
	\label{obs:poisson_zoo_is_ergodic}
	\text{ the law of $\mathcal{S}_{\nu}^\lambda$ is invariant and ergodic under the action of $\grp$.}
\end{equation}
Since the event of existence of an infinite cluster is invariant (under the action of the group $\grp$), we obtain that the value of $\prob \left( \mathcal{S}_{\nu}^\lambda \text{ percolates} \right)$ can be either $0$ or $1$.

For parameters $0<\lambda<\lambda'<\infty$, there are monotone couplings of $X\sim \POI(\lambda)$ and $X'\sim  \POI(\lambda')$ variables such that $X\leq X'$ almost surely; one comes from the obvious monotone coupling of $\mathsf{Exponential}$ variables, another from coupling $\mathsf{Binomial}$ variables and taking the limit. Thus there is also a monotone coupling of the Poisson zoos $\mathcal{N}_{\nu}^{\lambda}$ and $\mathcal{N}_{\nu}^{\lambda'}$ (with the same $\nu$) such that
\begin{equation}
	\label{obs:monotone_coupling_off_poisson_zoos}
	\mathcal{S}_{\nu}^{\lambda} \subseteq \mathcal{S}_{\nu}^{\lambda'} \quad \text{if} \quad
	0 \leq \lambda \leq \lambda'.
\end{equation}
Hence, there is a critical intensity $\lambda_c=\lambda_c(\graph,\nu)$, as defined in~(\ref{e.lambdac}), such that for all $\lambda < \lambda_c$ the trace $\mathcal{S}_{\nu}^{\lambda}$ almost surely does not percolate, while for all $\lambda>\lambda_c$ it almost surely does.

The basic question for any such percolation model concerns the non-triviality of phase transition: given a $\graph$ (and $\grp$), for which choices of $\nu$ do we have $\lambda_c \in (0,\infty)$?

Let us first argue that if $p_c := p_c^{\text{site}}(\graph) < 1$ for i.i.d.\ Bernoulli site percolation, then $\lambda_c<+\infty$ holds for any choice of a rotation invariant measure $\nu$. 
As mentioned in the Introduction, this is the case for all Cayley graphs that are not finite extensions of $\Z$ \cite{Duke} and for all uniformly transient graphs, even without transitivity \cite{EST}. 
Indeed, for any $x \in V(\graph)$, any $H \in \latticeanimals_x$ contains the vertex $x$ of $\graph$.  
Therefore, the trace $\mathcal{S}_{\nu}^\lambda$ of the Poisson zoo at level $\lambda$ stochastically dominates Bernoulli site percolation with density $p=1-e^{-\lambda}$,  which implies $\lambda_c \leq -\ln\left( 1-p_c \right) < +\infty$.

Therefore, the real question is whether the Poisson zoo model has a non-trivial subcritical phase (i.e., $\lambda_c>0$) or is it supercritical for all $\lambda >0$ (i.e., $\lambda_c=0$).
Let us now recall two general results from \cite{RathRokob2022} that give sufficient conditions for $\lambda_c=0$ and $\lambda_c>0$, respectively. 

\begin{definition}
	\label{def:moments_of_nu}
	Given $k \in \N$, let us denote the $k$'th moment of the cardinality of a random subset with law $\nu$ by
	\begin{equation}
		\label{eq:moments_of_nu}
		m_k := m_k(\nu):= \sum_{H \in \latticeanimals_{\origin}} |H|^k \cdot \nu(H).
	\end{equation}
\end{definition}

\begin{lemma}[Lemmas 1.5 and 1.6 in \cite{RathRokob2022}]
	\label{lemma:two_silly_lemmas}
	Assume that the action of $\grp \leq \Aut(\graph)$ on the infinite graph $\graph$ is transitive and unimodular (see Definition~\ref{def:unimodularity}).
	\begin{enumerate}[label=(\arabic*)]
		\item 
		\label{lemma:two_silly_lemmas:infinite_m1_implies_percolation}
		If $m_1 = \infty$, then for any $\lambda >0$ we have $\mathcal{S}_{\nu}^\lambda = V(\graph)$. This implies, in particular, that $\mathcal{S}_{\nu}^\lambda$ percolates for any $\lambda >0$.
		\item
		\label{lemma:two_silly_lemmas:finite_m2_implies_no_percolation}
		If $m_2 < \infty$, then, for any $\lambda \in (0, 1/ (\degree + 1) \cdot m_2 ))$, the set $\mathcal{S}_{\nu}^\lambda$ does not percolate, where $\degree$ denotes the valency of the graph.
	\end{enumerate}
\end{lemma}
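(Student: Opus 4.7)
The plan is to use the mass transport principle. The total number $N_\origin$ of animals in $\cN_\nu^\lambda$ that contain the origin is a sum of independent Poisson variables, hence $N_\origin \sim \POI(\mu)$ with $\mu = \lambda \sum_{y \in V(\graph)} \nu_y(\{H : \origin \in H\})$. The function $f(x, y) := \nu_x(\{H \in \latticeanimals_x : y \in H\})$ is diagonally $\grp$-invariant (since $\nu_y$ is independent of the choice of $\varphi_y$), so by MTP \eqref{eq:MTP} applied at $x = \origin$,
\[
\mu/\lambda \;=\; \sum_{y \in V(\graph)} f(y, \origin) \;=\; \sum_{y \in V(\graph)} f(\origin, y) \;=\; \E_{\nu_\origin}[\,|H|\,] \;=\; m_1.
\]
If $m_1 = \infty$, then $N_\origin = \infty$ a.s., so $\origin \in \mathcal{S}_\nu^\lambda$ a.s., and countability of $V(\graph)$ together with translation invariance of the law of $\mathcal{S}_\nu^\lambda$ forces $\mathcal{S}_\nu^\lambda = V(\graph)$ a.s.

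\textbf{Part (2), volume input.} The strategy is to bound $\E[|C_\origin|]$, where $C_\origin$ is the cluster of $\origin$ in $\mathcal{S}_\nu^\lambda$ (with $C_\origin := \emptyset$ if $\origin \notin \mathcal{S}_\nu^\lambda$), by a subcritical Galton--Watson domination. The same MTP trick, this time applied to the diagonally $\grp$-invariant function $f(x,y) := \E_{\nu_x}[\,|H| \cdot \ind\{y \in H\}\,]$, yields that for every $v \in V(\graph)$, the expected total volume (counting multiplicity) of all animals in $\cN_\nu^\lambda$ covering $v$ equals $\lambda m_2$. By linearity, for any $S \fsubset V(\graph)$, the expected total volume of animals covering some vertex of $S$ is at most $\lambda m_2 \cdot |S|$.

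\textbf{Part (2), exploration.} I would explore $C_\origin$ by BFS on its vertices: when processing a newly revealed $v \in C_\origin$, list every animal of $\cN_\nu^\lambda$ not yet accounted for that covers some vertex of $\overline{\{v\}} = \{v\} \cup \extboundary\{v\}$; any such animal lies entirely in $\mathcal{S}_\nu^\lambda$ and is connected to $v$ (directly if it contains $v$, or via an immediate neighbour of $v$ that it covers), so its vertices belong to $C_\origin$ and the not-yet-seen ones are pushed to the next generation. Since $|\overline{\{v\}}| \le \degree + 1$, the previous paragraph bounds the expected number of new vertices added while processing $v$ by $(\degree + 1)\lambda m_2$, even conditional on the explored history, because Poisson thinning ensures that the animals disjoint from the explored region still form a Poisson process with the original intensity. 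Dominating the BFS by a Galton--Watson tree with offspring mean $(\degree+1)\lambda m_2 < 1$ then gives
\[
\E[|C_\origin|] \;\le\; \sum_{n \ge 0} \bigl((\degree+1)\lambda m_2\bigr)^n \;=\; \frac{1}{1 - (\degree+1)\lambda m_2} \;<\; \infty,
\]
so $|C_\origin|$ is a.s.\ finite and $\mathcal{S}_\nu^\lambda$ does not percolate.

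\textbf{Expected main obstacle.} The only real care needed is in the bookkeeping of the exploration: an animal may be discoverable through several vertices of $C_\origin$, or cover several vertices inside one closed neighbourhood $\overline{\{v\}}$. Both effects can only sharpen the upper bound, so the Galton--Watson domination stays valid, but one must phrase the BFS so that each animal is exposed at most once and verify that, conditional on what has been revealed, the yet-unseen animals over the unexplored region still constitute a Poisson process of the original intensity---a routine restriction/thinning statement for Poisson point processes that the paper anyway sets up in detail in Subsection~\ref{subsection:point_processes_of_rooted_animals}.
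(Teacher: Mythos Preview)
Your proof is correct and matches the approach the paper points to: the paper actually omits the proof of this lemma, referring to \cite{RathRokob2022} for the original argument and to Lemma~\ref{lemma:size_biasing_for_Poisson_zoo} (the size-biasing identity via MTP) as containing the key computation. Your Part~(1) is exactly the $\E[\mu^{\animals}(x)]=\lambda m_1$ identity from that lemma, and your Part~(2) exploration with Galton--Watson domination of mean $(\degree+1)\lambda m_2$ is precisely the ``domination by a subcritical branching process from above'' that the paper attributes to \cite{RathRokob2022} in Subsection~\ref{subsection:poisson_zoo_on_free_products}; the factor $\degree+1$ arising from the closed neighbourhood $\overline{\{v\}}$ also matches the constant in the statement.
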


These were proved in \cite{RathRokob2022} for the Euclidean lattice $\Z^d$, $d \geq 2$, but they easily extend to our general setting, assuming unimodularity. We omit the proofs here, especially that Lemma~\ref{lemma:size_biasing_for_Poisson_zoo} below is a version of these lemmas, explaining how unimodularity and size-biasing lead to the relevance of the first and second moments. Our results do require unimodularity in a crucial way --- see Remark~\ref{remark:unimodularity_is_needed}.

\subsection{Proof strategy for the Poisson zoo on free products }
\label{subsection:poisson_zoo_on_free_products}

A key example where we can show  the reverse of part~\ref{lemma:two_silly_lemmas:finite_m2_implies_no_percolation} of Lemma~\ref{lemma:two_silly_lemmas}, for completely general animals, are $\degree$-regular trees $\tree_{\degree}$ with $\degree \geq 3$. The proof, with some non-negligible extra work, applies more generally, to unimodular free products (see Definition~\ref{def:free_product}), giving us Theorem~\ref{t.free}. In this subsection, we sketch our proof strategy for this theorem.

The proof in  \cite{RathRokob2022} for Lemma~\ref{lemma:two_silly_lemmas}~\ref{lemma:two_silly_lemmas:finite_m2_implies_no_percolation} relies on a domination by a subcritical branching process {\it from above}. In the present paper, in order to prove percolation for every $\lambda>0$, we will embed a supercritical branching process {\it inside} the percolation cluster of a given vertex. As mentioned above, supercriticality at every $\lambda$ should (and will) be provided by $m_2(\nu)=\infty$, but how do we get a branching structure? The basic observation is that, since in a free product every vertex is a cutpoint that separates two directions, cf.\ Claim \ref{claim:properties_of_free_product_of_graphs}, one can go either in the direction of the first component of the product or the second, and the animals living on these disjoint subgraphs must be also disjoint. Thus, by the defining properties of the underlying Poisson point process, we have that these animals must be independent of each other.
Consequently, we can embed a Galton-Watson branching process into the vertices of the graph by defining the offspring of an individual to be certain vertices associated to the exterior boundary of the set consisting of the union of the traces of the animals visiting only those neighbours of the individual that are accessed from the not yet considered direction.
 However, since the automorphism subgroup under which the Poisson zoo is invariant acts transitively only on the vertices, not on the edges, it could happen that the given rotation-invariant measure $\nu$ could not produce enough animals that are visiting from that direction, and thus would not give rise to a supercritical reproduction mean; see Figure~\ref{f.free2} for this ``parity issue''. For any given occupied set explored so far, we can ensure large further expected growth only through its exterior boundary vertices in one of the directions; see Corollary \ref{coro:lower_bound_on_expected_size_of_fat_of_animals_through_one_vertex_on_free_product}.
 
\begin{figure}[htbp]
\centerline{
\includegraphics[width=2.5 in]{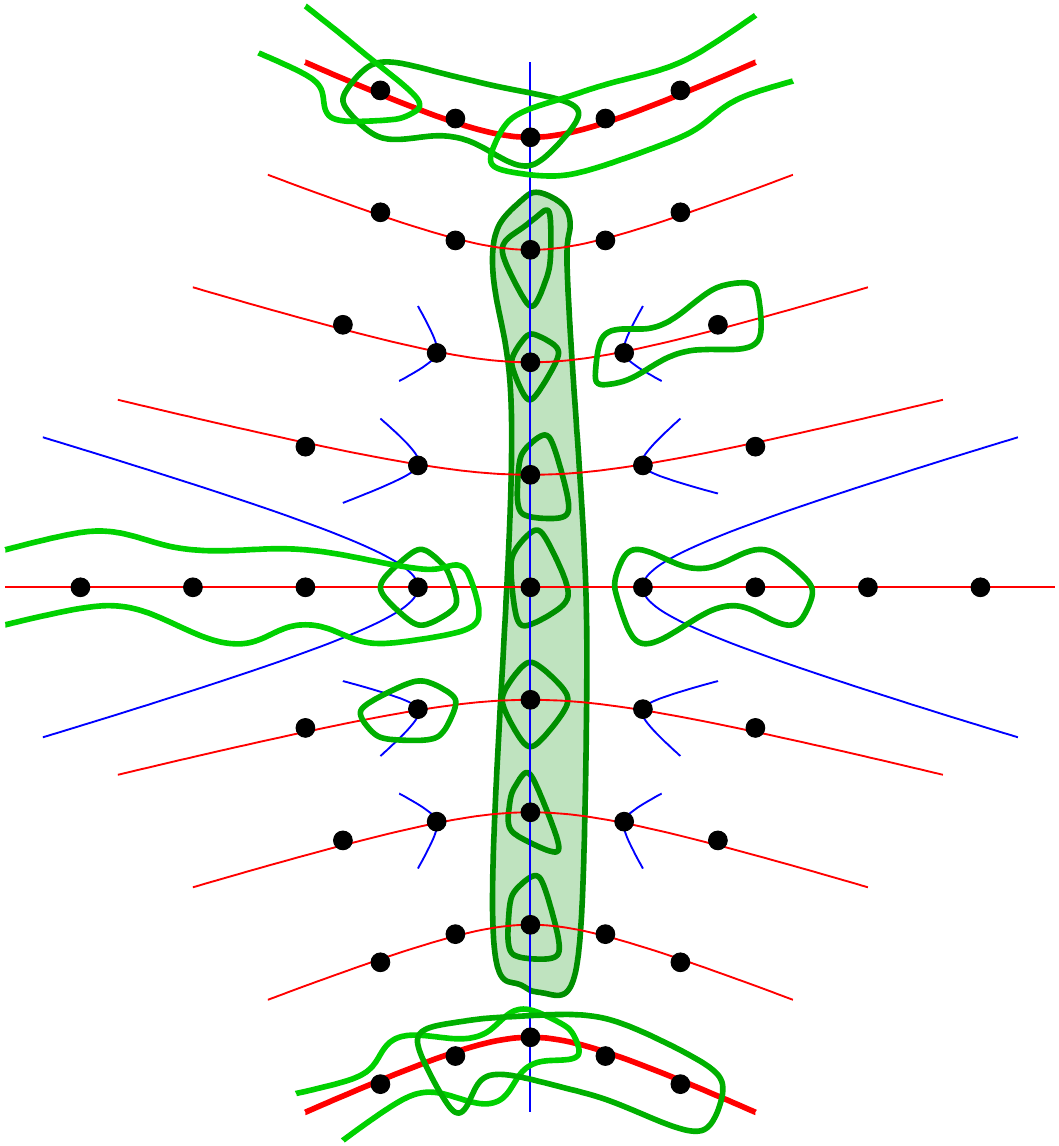}
}
\caption{{\bf A parity issue.} On the free product $\Z\star\Z$, generated by the letters $a$ and $b$, take a $\nu$ with $m_2(\nu)=\infty$ that is supported only on words in one of the generators, say $a$ (the ``horizontal'' red generator in the picture). It may happen that we have already explored a large connected set, shaded green in the picture, which has of course a large total exterior boundary, but only a small one in the $b$ direction. But only these are the exterior boundary vertices from where the $a$ direction is completely unexplored, where we can have a lot of new animals covering that vertex, exploiting the size-biasing effect and the $m_2(\nu)=\infty$ condition.}
\label{f.free2}
\end{figure}
 
Fortunately, this asymmetry caused parity issue can be remedied using a standard method of percolation theory: sprinkling.
Vaguely, if we arrived at some vertex in the exterior boundary of the set visited by animals explored so far from a direction that is not beneficial for further progress, then, instead of considering this vertex as an offspring, we take a neighbour in the right direction. On the one hand, this extra step causes a hole (just a single vertex) in the embedded branching process, but, on the other hand, it will open up territories towards the good direction. The good news is that, if we use only a positive fraction of animals for constructing the branching process, then the hole can be patched using the animals ``growing out of the hole'' from the other positive fraction, independently from everything else. Of course, this has a probability cost, worsening the reproduction mean by a positive factor, but, given the infinite mean for the offspring, we will still remain in the supercritical phase (cf.\ Claim \ref{claim:lower_bound_on_reproduction_rate_branching_process}), proving Theorem~\ref{t.free}.

\medskip

Wishing to progress further, beyond free products, there are serious issues. The tree-like inner structure of the free product guarantees that we can separate the animals into not just independent bundles, but non-intersecting ones (which then can be used as the offsprings). One cannot aim for such a disjointness of general animals in a general nonamenable graph, but may try to experiment with some relaxation of disjointness, where there is still a control over the intersections of the bundles. However, given the varieties in animal shapes and in the geometries of Cayley graphs, this seems like a really hard problem. One case where this can be done is when the animals are random walk trajectories, taking us to our next result.

%
%
%

\subsection{Restatement and proof strategy for the worms model}
\label{subsection:random_length_worms_model_introduction}

Random walk trajectories are very natural, locally growing random sets, worth studying for a variety of reasons. On nonamenable graphs, they grow linearly (Theorem~\ref{thm:positive_constant_speed_of_SRW_on_nonamenable_transitive_graphs} above), hence they may be expected to produce large clusters, possibly achieving $\lambda_c(\nu)=0$ in great generality.


\begin{definition}
	\label{def:random_length_worms_model}
	Let us consider a probability mass function $\eta: \N_{>0} \to [0,1] $ and an $\N_{>0}$-valued random variable $\mathcal{L}$ satisfying $\prob(\mathcal{L}=\ell)=\eta(\ell), \, \ell \in \N_{>0}$. 
	We call the distribution of $\mathcal{L}$ the length distribution of the worms. Let us also consider a nearest neighbour simple random walk $(X(t))_{t \in \N}$ on the vertices of $\graph$ starting from $X(0)=\origin$, independent of $\mathcal{L}$.
	Let us define the probability measure $\wormnu{\eta}$ on $\mathcal{H}_{\origin}$ by 
	\begin{equation}
		\label{def:worm_intensity_measure}
		\wormnu{\eta}(H) := P_{\origin}( \{ X(0), X(1), \ldots, X(\mathcal{L} - 1) \} = H )
	\end{equation}
	for each $H \in \mathcal{H}_{\origin}$.
	Given any $\lambda >0$, the Poisson zoo $\mathcal{N}_{\eta}^{\lambda} := \mathcal{N}_{\wormnu{\eta}}^{\lambda}$ built upon this measure via Definition \ref{def:poisson_zoo} is called the random length worms model at level $\lambda$, and the corresponding random set $\mathcal{S}_{\eta}^\lambda := \mathcal{S}_{\wormnu{\eta}}^\lambda$ is the random length worms set at level $\lambda$.
\end{definition}

Note that the law of a randomly stopped random walk on a graph $\graph$ is $\Aut(\graph)$-rotation invariant in the sense of Definition~\ref{def:grp-rotation-invariant_prob_measure}, making this zoo model meaningful.
Moreover, the moment conditions of Lemma~\ref{lemma:two_silly_lemmas} are easy to check:

\begin{lemma}[Time versus trace]\label{l.nuell}
On any nonamenable transitive graph, the lattice animal measure $\wormnu{\eta}$ corresponding to a worm length measure $\eta$, from Definition~\ref{def:random_length_worms_model}, satisfies $m_1(\wormnu{\eta})<\infty$ and $m_2(\wormnu{\eta})=\infty$ if and only if $\E[\mathcal{L}]<\infty$ and $\E[\mathcal{L}^2]=\infty$.
\end{lemma}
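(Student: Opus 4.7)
My plan is to prove the two-sided comparison $m_k(\wormnu{\eta}) \asymp \E[\mathcal{L}^k]$ for every $k \geq 1$ (up to a bounded additive term from the $\mathcal{L} = 1$ contribution), which immediately yields the stated equivalence for $k = 1, 2$. The upper bound is essentially trivial, while the lower bound is where nonamenability enters, through the linear-speed Corollary~\ref{coro:number_of_steps_and_size_of_trace_of_SRW_are_comparable_on_nonamenable_graphs}.

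For the upper bound, note that an $\ell$-step walk visits at most $\ell$ distinct vertices, so $|H| \leq \mathcal{L}$ pointwise; using the independence of $X$ and $\mathcal{L}$ we obtain
\[
m_k(\wormnu{\eta}) \;=\; \sum_{\ell \geq 1} \eta(\ell)\, E_{\origin}\bigl|X[0, \ell-1]\bigr|^k \;\leq\; \sum_{\ell \geq 1} \eta(\ell)\, \ell^k \;=\; \E[\mathcal{L}^k].
\]
For the matching lower bound, fix $\veps = 1/2$ and let $c := c(1/2) > 0$ be the constant produced by Corollary~\ref{coro:number_of_steps_and_size_of_trace_of_SRW_are_comparable_on_nonamenable_graphs}, so that $P_\origin\bigl(|X[0, t]| \geq c\, t\bigr) \geq 1/2$ for every $t > 0$. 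Applying this with $t = \ell - 1$ on each slice and summing gives
\[
m_k(\wormnu{\eta}) \;\geq\; \sum_{\ell \geq 2} \eta(\ell) \cdot \bigl(c(\ell-1)\bigr)^k \cdot \tfrac{1}{2} \;=\; \tfrac{c^k}{2}\, \E\bigl[(\mathcal{L}-1)^k \, \ind_{\{\mathcal{L} \geq 2\}}\bigr].
\]

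Combining these two bounds, $m_k(\wormnu{\eta})$ and $\E[\mathcal{L}^k]$ are comparable up to a multiplicative constant and a harmless $\ell = 1$ correction, so $m_1 < \infty$ iff $\E[\mathcal{L}] < \infty$ and $m_2 = \infty$ iff $\E[\mathcal{L}^2] = \infty$. I do not anticipate a real obstacle here: beyond the pointwise estimate $|H| \leq \mathcal{L}$, the only input is the linear-speed property of simple random walk on nonamenable transitive graphs, already recorded as Theorem~\ref{thm:positive_constant_speed_of_SRW_on_nonamenable_transitive_graphs}.
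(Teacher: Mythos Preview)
Your proposal is correct and follows essentially the same approach as the paper: the trivial pointwise bound $|H|\le\mathcal{L}$ for the upper direction, and Corollary~\ref{coro:number_of_steps_and_size_of_trace_of_SRW_are_comparable_on_nonamenable_graphs} (linear trace growth from nonamenability) for the lower direction. You are in fact slightly more careful than the paper about the $t=\ell-1$ shift and the $\ell=1$ term, but the argument is the same.
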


\begin{proof}
Obviously, $\big|X[0,\cL)\big| \leq \cL$, hence an infinite first or second moment for $\wormnu{\eta}$ implies the same for $\cL$. 

For the other directions, we can use Corollary~\ref{coro:number_of_steps_and_size_of_trace_of_SRW_are_comparable_on_nonamenable_graphs} and the random walk steps being independent of the length $\mathcal{L}$, implying, for $j=1,2$ and any $\veps\in (0,1)$, the following:
\begin{equation}
\begin{aligned}
m_j(\wormnu{\eta}) &= \sum_{\ell\ge 1} E_o \Big[ \big| X[0, \mathcal{L}) \big|^j \,\Big|\, \mathcal{L}=\ell \Big]  \, \eta(\ell)\\
& \overset{\eqref{eq:number_of_steps_and_size_of_trace_of_SRW_are_comparable_on_nonamenable_graphs}}{\geq}  (1-\veps) \, c(\veps)^j  \sum_{\ell \geq 1} \ell^j \, \eta(\ell)  \ge c_j \, \E \left[ \mathcal{L}^j \right].
\end{aligned}
\end{equation}
This finishes the proof.
\end{proof}

As a corollary, we immediately get that the next theorem is equivalent to Theorem~\ref{t.worms}.

\begin{theorem}
	\label{thm:random_length_worms_model_on_nonamenable_graphs}
	Let $\graph$ be an infinite graph, such that $\grp \subgrp \Aut(\graph)$ acts transitively and unimodularly on $\graph$.
	If $\mathcal{L} \sim \eta$ with $\E \left[ \mathcal{L}^2 \right] = +\infty$, then for any $\lambda>0$ the random length worms model $\mathcal{N}_{\eta}^\lambda$ is supercritical:
	$\prob(\, \mathcal{S}_{\eta}^\lambda \text{ percolates}\, )=1$.
\end{theorem}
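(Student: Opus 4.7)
The plan is to show that, with positive probability, the cluster of the origin in $\mathcal{S}_\eta^\lambda$ is infinite, by running an exploration process that never dies. Start with some finite connected set $E_0 \ni \origin$ (for instance $E_0=\{\origin\}$). At step $n \ge 1$, given the explored set $E_{n-1}$, reveal only those worms of the underlying Poisson point process that start outside $E_{n-1}$ but whose trace hits $E_{n-1}$: by the independence properties of a Poisson point process restricted to disjoint regions of trajectory space, these new worms form, conditionally on $E_{n-1}$, a fresh Poisson ensemble independent of everything used to build $E_{n-1}$. Let $F_n$ denote the portion of their traces lying in $V(\graph)\setminus E_{n-1}$ --- the \textbf{fat} at step $n$ --- so that $E_n := E_{n-1}\cup F_n$ is finite and connected. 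Since $\mathcal{S}_\eta^\lambda$ is ergodic under the $\grp$-action, see~\eqref{obs:poisson_zoo_is_ergodic}, showing $\P(\origin \text{ lies in an infinite cluster})>0$ is enough to deduce the conclusion.

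The engine of the proof is a first- and second-moment analysis of $|F_n|$ given $E_{n-1}$. For the first moment, decompose the contribution of each worm by its first entrance point into $E_{n-1}$. A reversibility rearrangement turns the total expected fat into a sum over $y\in\extboundary E_{n-1}$ of quantities of the form $P_y^2(T^+_{\overline{E_{n-1}}}=+\infty)$ multiplied by a truncated second-moment-of-$\mathcal{L}$ factor; Corollary~\ref{coro:lower_bound_on_partial_square_term_capacity}, itself a consequence of the linearity of capacity (Lemma~\ref{lemma:capacity_bound}), gives that this sum is at least a uniform positive multiple of $|E_{n-1}|$, while Corollary~\ref{coro:number_of_steps_and_size_of_trace_of_SRW_are_comparable_on_nonamenable_graphs} converts the squared-number-of-steps factor into a squared-trace-size factor (the nonamenable replacement for range estimates on $\Z^d$). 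Truncating worm lengths at some level $L$ yields
\[
	\E\bigl[\,|F_n|\,\big|\,E_{n-1}\bigr] \;\geq\; c_1(\graph)\,\lambda \,|E_{n-1}|\,\Psi(L),
\]
where $\Psi(L)\to +\infty$ as $L\to +\infty$ precisely because $\E[\mathcal{L}^2]=+\infty$. A parallel calculation of $\E[|F_n|^2 \mid E_{n-1}]$, using the second-moment formula for Poisson functionals (splitting into diagonal and off-diagonal terms), will give a matching upper bound in which the off-diagonal part is of order $|E_{n-1}|^2\Psi(L)^2$ and matches the squared mean, so the variance is of strictly smaller order than the squared mean.

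The main obstacle, and where the scheme becomes delicate, is that $\E[\mathcal{L}^2]=+\infty$ rules out a vanilla branching-process argument: the per-step growth has infinite mean. The fix, flagged in the introduction, is to strengthen the inductive hypothesis and prove more than is minimally needed. Fix sequences $L_n\uparrow +\infty$ and $a_n\uparrow +\infty$ with $a_n$ of size roughly $\tfrac12\, c_1(\graph)\,\lambda\,a_{n-1}\,\Psi(L_n)$, and at step $n$ restrict the exploration to worms of length $\leq L_n$ only (ignored worms only enlarge $E_n$, by monotonicity~\eqref{obs:monotone_coupling_off_poisson_zoos}). Because $a_{n-1}$ grows very quickly and the variance of $|F_n|$ is of smaller order than the squared mean, Chebyshev's inequality applied at step $n$ yields
\[
	\P\bigl(\,|E_n| < a_n \;\big|\; |E_{n-1}|\ge a_{n-1}\,\bigr) \;\leq\; \veps_n,
\]
with $\sum_n \veps_n < +\infty$ for an appropriate choice of $L_n$. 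A simple product-bound over steps then gives $\P(\forall n:\,|E_n|\ge a_n) \ge \prod_n(1-\veps_n)>0$, producing an infinite connected subset of $\mathcal{S}_\eta^\lambda$ on a positive-probability event. The technical heart of the argument is thus Section~\ref{section:bounds_on_fat}: making the first- and second-moment estimates on $F_n$ quantitative and uniform in the shape of $E_{n-1}$, so that the induction above can be executed.
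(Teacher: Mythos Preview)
Your overall architecture is right and close to the paper's, but there is one genuine gap and one avoidable complication.

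\textbf{The gap: step-to-step independence.} Your exploration reveals, at step $n$, ``worms starting outside $E_{n-1}$ whose trace hits $E_{n-1}$''. These packets are \emph{not} disjoint across steps: a worm starting outside $E_n$ that hits $E_{n-1}\subset E_n$ is revealed both at step $n$ and at step $n+1$, so your conditional-independence claim fails as stated. The paper's remedy is to restrict to worms that \emph{bounce off} the new part of the boundary $B_n:=\extboundary E_n\setminus\extboundary E_{n-1}$, i.e., hit $B_n$ but avoid $\overline{E_n}\setminus B_n$ entirely (Definition~\ref{def:point_measure_restricted_to_hit_B_but_not_the_closure_of_A} and Claim~\ref{claim:independence_of_the_next_step_of_the_exploration_from_the_past}). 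This restriction is precisely what makes the Poisson packets disjoint across steps, and it is also why the first-moment lower bound (Corollary~\ref{coro:lower_bound_on_expected_size_of_fat_worms_on_nonamenable_graphs}) carries the subtracted term $|\extboundary A|-|B|$: one must check that $B_n$ is still large enough, which is where the inductive hypothesis $|E_n|\ge a|E_{n-1}|$ gets used via~\eqref{eq:lower_bound_on_Bn_by_En_assuming_growth}.

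\textbf{The complication: varying truncation.} You propose sequences $L_n\uparrow\infty$ and $a_n\uparrow\infty$. The paper instead uses a \emph{single fixed} truncation level $R$ and a \emph{single fixed} growth factor $a$ throughout (Lemma~\ref{l.growth} and Proposition~\ref{p.growth}). Since $\E[\mathcal{L}^2]=\infty$, one can pick $R$ so that the truncated second moment exceeds any prescribed constant; the variance bound of Lemma~\ref{lemma:upper_bound_on_the_second_moment_of_the_occupied_sets_size_for_worms} then reads $\Var\le(|\ball(\origin,R)|+1)\cdot\E$, linear in the mean with a fixed constant, and Chebyshev yields errors decaying like $a^{-n}$, summable. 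The only subtlety is initialization: one needs $\prob(|E_0|>b)>0$ for $b>2(|\ball(\origin,R)|+1)$, which is why the paper takes $E_0$ to be the trace of \emph{all} worms through $\origin$ (not $\{\origin\}$) and invokes unbounded support of $\nu_\origin$. Your varying-$L_n$ scheme is not wrong in principle, but it forces you to track the growth of the variance constant against that of $a_{n-1}\Psi(L_n)$, bookkeeping the paper sidesteps entirely.
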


As indicated above, the proof of this result uses the following process exploring the cluster of the origin.
Initially, we take the union of all those worms that ever visit the origin, giving the set $E_0$.
Assuming that $E_0$ is not empty and large enough, we can ``fatten'' this set in the next step using trajectories that 
$(1)$ have length at most $R$; and 
$(2)$ bounce back from the closure $E_0 \cup \extboundary E_0$, meaning that the worm visits the exterior boundary of $E_0$ once, but, apart from this visit, it avoids the closure entirely.
(Note here that, since we consider site percolation, it is not a problem that the edges between the set and its exterior boundary are not visited.)

If we denote the union of $E_0$ and the traces of the bounced back worms by $E_1$, then of course it can happen that $\extboundary E_1 \cap \extboundary E_0$ is not empty, meaning that no worm bounced back from the vertices of this intersection.
Consequently, to obtain independence between the iterations, in the next step we have to change property $(2)$ to the one: the worm of length at most $R$ bounces back from $\extboundary E_1 \setminus \extboundary E_0$ and never visits the closure $E_1 \cup \extboundary E_1$.
This is what we can call a {\bf fattening} of $E_1$ through $\extboundary E_1 \setminus \extboundary E_0$ (with worms of length at most $R$), and what the exploration does as long as it can: given $E_n \supseteq E_{n-1}$ it fattens $E_n$ through $\extboundary E_n \setminus \extboundary E_{n-1}$ to obtain $E_{n+1}$ as the union of $E_n$ and the traces of the fattening worms.

Of course, to achieve that this exploration runs indefinitely with positive probability (and hence that $\mathcal{S}_{\eta}^\lambda$ percolates almost surely), we also need that $E_n$ is so large compared to $E_{n-1}$ that it is still well visible by the newcoming worms even through the ``lens'' of $\extboundary E_n \setminus \extboundary E_{n-1}$.
From the point of view of random walkers, this visibility of a set is characterized by its capacity, defined in~\eqref{def:capacity}.
Fortunately, as the capacity of a set is comparable to its size on a nonamenable graph (see Lemma~\ref{lemma:capacity_bound}), the condition above reduces to a condition on the sizes, that is, it is enough to guarantee that $|E_n| > a \cdot |E_{n-1}|$ with some large enough $a \gg 1$ depending only on the graph.
As we will see in Proposition~\ref{p.growth}, this follows if $R$ is chosen large enough.

Let us emphasize here that one of the main reasons why we could prove Theorem \ref{thm:random_length_worms_model_on_nonamenable_graphs} with the outlined argument is the aforementioned control on the worms: its trace, a trajectory, can bounce back from the already explored set.
Consequently, due to the defining properties of Poisson point processes, cf.\ Theorem \ref{thm:basic_properties_of_PPPs}, we can do the fattening of the set already explored through the available subset of its exterior boundary by independent packages of fattening worms (a package being the collection of walkers bouncing back from the same vertex), which makes it possible to derive first and second moment bounds on the size of the set occupied by the newcomers, the ``fat''; see Subsection~\ref{subsection:worms_on_nonamenable_graphs_fattening} for the details.

\medskip

Again, one may wonder how far this strategy could be pushed. Worms are able to touch any given set and bounce back, but general animals do not have such capability: it is not even known if for any Cayley graph $\graph$ there is a constant $K<\infty$ such that for any two finite sets $A,B\subset V(\graph)$ there is a graph automorphism $\gamma$ that achieves $0< \dist_\graph(A,\gamma(B)) < K$. See \cite[Question 3]{BandyopadhyaySteifTimar2010} or \cite[Exercise 5.10]{Pete2023}.
This gap cannot be bridged even by sprinkling, as used in the proof of Theorem \ref{t.free}. And even if this issue of touching was solved for some animals, it is not clear how these touching animals can overlap with each other, making the existence and linearity of any notion of ``capacity'' questionable.

%

\subsection{Point processes of rooted animals}
\label{subsection:point_processes_of_rooted_animals}

The goal of this subsection is to provide an easy to use, but still general enough object, namely a Poisson point process on a general enough space, that can be helpful in the analysis of the Poisson zoo and it special cases.
Let us mention that although there is a large overlap between the construction we will describe here and the one given in \cite[Section 5]{RathRokob2022}, we will build up the notations and the concepts from zero again.
On one hand, this will make this paper more self-contained, and on the other hand here we will use rooted simple sets (animals) as the points of our base space, instead of trajectories, i.e., indexed multisets, which were used in \cite{RathRokob2022}.

\subsubsection{The universal zoo}

Let us denote the space of all rooted animals, i.e., {\bf the universal zoo} by
\begin{equation}
	\label{def:universal_zoo}
	\zoospace := \zoospace(\graph) := \left\{ (x,H) \, : \, H \in \latticeanimals_{x}, \, x \in V(\graph) \right\}.
\end{equation}
Since our graph $\graph$ is locally finite and each animal is a finite set, this set is countable.
Moreover, this space can be endowed with a natural $\sigma$-algebra $\zooalgebra$: all subsets of $\zoospace$.

Although for the different special cases of the Poisson zoo, different characteristics are relevant for the species --- e.g., the radius for the balls, or the number of steps taken for the worms --- we will only use the {\bf volume} function on $\zoospace$, i.e.,
\begin{equation}
	\label{def:volume}
	\volume((x,H)) := \left| H \right|
\end{equation}
for any $(x,H) \in \zoospace$.

Obviously, this volume can be used to decompose $\zoospace$ into disjoint subspaces:
\begin{align}
	\label{def:subzoo_of_fixed_volume_animals}
	\zoospace =	\bigcup_{k = 1}^{\infty} Z_k,
	\qquad\text{where } 
	Z_k:=	\big\{ (x, H) \; : \; x \in V(\graph), \ H \in \latticeanimals_x, \ \volume((x,H)) = k \big\}.
\end{align}

\begin{definition}
	\label{def:set_hitting_animals}
	For any $K \subset \subset V(\graph)$, let us introduce the notation
	\begin{equation}
		\label{eq:set_hitting_animals}
		\zoospace(K) := 
		\left\{
			(x, H) \in \zoospace \, : \, H \cap K \neq \emptyset
		\right\},
	\end{equation}
	the subset of animals that hit the set $K$.
	Analogously, let us also define, for any $k \in \N_{>0}$,
	\begin{equation}
		\label{eq:set_hitting_animals_of_volume_k}
		\zoospace_k(K) := 
		\left\{
			(x, H) \in \zoospace \, : \, H \cap K \neq \emptyset, \, \volume((x,H)) = k
		\right\},
	\end{equation}
	and denote the finite union of such sets, for $R \in \N$, as
	\begin{equation}
		\label{eq:set_hitting_animals_of_volume_at_most_R}
		\zoospace^R(K) :=
		\bigcup_{k = 0}^{R} Z_k(K) =
		\left\{
			(x, H) \in \zoospace \, : \, H \cap K \neq \emptyset, \, \volume((x,H)) \leq R
		\right\}.
	\end{equation}
\end{definition}

\subsubsection{Point measures and their occupation measures}

Let us denote by $\zooPP$ the {\bf space of point measures} on $\zoospace$, that is,
\begin{equation}
	\animals \in \zooPP \, \iff \,
	\animals = \sum_{i \in I} \delta_{(x,H)_i},
\end{equation}
where $I$ is a finite or countably infinite set of indices, $(x,H)_i \in \zoospace$ for all $i \in I$ and $\delta_{(x,H)}$ denotes the Dirac mass concentrated on $(x,H) \in \zoospace$. {The topology on $\zooPP$ is the discrete one.}

Let us denote by $\animals(A)$ the $\animals$-measure of $A \in \zooalgebra$. 
We can alternatively think of an $\animals \in \zooPP$ as a multiset of animals, where $\animals(\{ (x,H) \})$ is the number of copies of the animal $(x,H)$ contained in $\animals$.
Since later on we want to consider random variables defined on the space $\zooPP$ of form $\animals(A)$ for any $A \in \zooalgebra$, we define $\zooPPalgebra$ to be the sigma-algebra on $\zooPP$ generated by such variables.

If $\animals  = \sum_{i \in I} \delta_{(x,H)_i} \in \zooPP$ and $A \in \zooalgebra$, let us denote by
\begin{equation}
	\label{def:restricted_zoo}
	\animals \ind[A] := \sum_{i \in I} \delta_{(x,H)_i} \ind[ (x,H)_i \in A ]
\end{equation}
the {\bf restriction} of $\animals$ to the set $A$ of animals. Note that $ \animals \ind[A]$ is also an element of $\zooPP$. More generally, given $A \in \zooalgebra $, if $f : A \lora \zoospace$ is a function and $\animals = \sum_{i \in I} \delta_{(x,H)_i} \in \zooPP$ is a point measure satisfying $\animals = \animals \ind[ A ]$, then let
\begin{equation}
	\label{def:f_image_of_zoo}
	f(\animals):= \sum_{i \in I} \delta_{f((x,H)_i)}
\end{equation}
denote the {\bf image of $\animals$ under $f$}. Note that $f(\animals)$ is also an element of $\zooPP$. {The restriction $\animals \ind[ A ]$ is indeed a special case, where $f$ is multiplication by $\ind[A]$.}

Given $(x,H) \in \zoospace$ and $\animals = \sum_{i \in I} \delta_{(x,H)_i} \in \zooPP$, we define the {\bf traces} $\trace((x,H)) \subset V(\graph)$ and $\trace( \animals ) \subseteq V(\graph)$ by
\begin{equation}
	\label{def:trace_of_animals_and_zoo}
	\trace( (x,H) ):= H, \qquad \text{and} \qquad
	\trace\left( \animals \right):= \bigcup_{i \in I} \trace((x,H)_i);
\end{equation}
that is, 
 $\trace\left( \animals \right)$ is the set of all sites occupied by $(x,H)_i, i \in I$.


Given $(x,H) \in \zoospace$ and $\animals = \sum_{i \in I} \delta_{(x,H)_i} \in \zooPP$, we define the {\bf occupation measures} $\LTmeasure^{(x,H)}$ and $\LTmeasure^{\animals}$ on $\graph$ by
\begin{equation}
	\label{def:local_time_measure}
	\LTmeasure^{(x,H)} := \sum_{y \in H} \delta_{y}, \qquad \text{and} \qquad \LTmeasure^{\animals}:= \sum_{i \in I}  \mu^{(x,H)_i}.
\end{equation}
Thus, if $y \in V$, then $\LTmeasure^{(x,H)}(y) := \LTmeasure^{(x,H)}(\{y\}) = \ind \left[ y \in H \right]$ and $\LTmeasure^{\animals}(y) := \LTmeasure^{\animals}(\{y\})$ equals the number of sets containing $y$.  
 Given some  $\animals = \sum_{i \in I} \delta_{(x,H)_i} \in \zooPP$, we define the {\bf total size} or {\bf total occupation measure} $\totalsize^{\animals}$ of the animals $(x,H)_i, i \in I$, by
\begin{equation}
	\label{def:total_local_time}
	\totalsize^{\animals} :=
	\mu^{\animals}(V) = \sum_{i \in I} \volume((x,H)_i).
\end{equation}

\subsubsection{The zoo as a Poisson point process}

Recall the lattice animal measure $\nu=\nu_o$ from Definition~\ref{def:grp-rotation-invariant_prob_measure}, and that~\eqref{def:translated_prob_measure} gives us, for any $x \in V(\graph)$, a translated version $\nu_{x}$ on $\latticeanimals_{x}$.  Together they induce a probability measure
\begin{equation}
	\label{def:induced_measure_on_zoospace}
	\nu := \mathop{\bigotimes}_{x \in V(\graph)} \nu_{x}
\end{equation}
on the universal zoo $Z$ defined in~\eqref{def:universal_zoo}. That is, for an animal $(x,H) \in \zoospace$, we have $\nu((x,H)) := \nu( \{ (x,H) \} ) = \nu_x(H)$.

\begin{definition}
	\label{def:Poisson_zoo_as_PPP}
	Given some $\lambda \in \R_{>0}$ and a rotation invariant probability measure $\nu_{\origin}$ on $\latticeanimals_{\origin}$, a random element $\animals$ of $\zooPP$ has law $\mathcal{Q}_{\nu}^{\lambda}$ if $\animals$ is a Poisson point process (abbreviated as PPP) on $\zoospace$ with intensity measure $\lambda \cdot \nu( (x,H) )$, where $\nu$ is the measure induced by $\nu_{\origin}$ as in \eqref{def:induced_measure_on_zoospace}.
\end{definition}

%

\begin{claim}
	\label{claim:Poisson_zoo_set_as_the_trace_of_a_PPP}
	The trace $\mathcal{S}_{\nu}^\lambda$ of the Poisson zoo of Definition~\ref{def:poisson_zoo}  can be alternatively defined as $\mathcal{S}_{\nu}^\lambda := \trace ( \animals )$, where $\animals \sim \mathcal{Q}_{\nu}^\lambda$.
\end{claim}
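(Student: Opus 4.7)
The claim is essentially a bookkeeping identification between two equivalent encodings of the same random set, so the plan is simply to unpack the definitions and invoke the standard structure of Poisson point processes on a countable space.

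First, I would observe that the universal zoo $\zoospace$, as noted after~\eqref{def:universal_zoo}, is a countable set (the graph is locally finite and each animal is finite), and $\zooalgebra$ is its full power set. For a PPP on a countable space with intensity measure $\mu$, it is a standard fact that the atom counts $\{\animals(\{z\}) : z \in \zoospace\}$ are mutually independent Poisson random variables with parameters $\{\mu(\{z\}) : z \in \zoospace\}$. Applied to $\animals \sim \mathcal{Q}_\nu^\lambda$, this gives that the family
\begin{equation*}
\big\{\, \animals(\{(x,H)\}) \,:\, x \in V(\graph), \, H \in \latticeanimals_x \,\big\}
\end{equation*}
consists of independent $\POI\!\left(\lambda \cdot \nu((x,H))\right) = \POI\!\left(\lambda \cdot \nu_x(H)\right)$ random variables, by the construction of $\nu$ in~\eqref{def:induced_measure_on_zoospace}.

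Second, I would match this with the Poisson zoo of Definition~\ref{def:poisson_zoo}, which is exactly the family $\{N_{x,H}^\lambda : x \in V(\graph), H \in \latticeanimals_x\}$ of independent $\POI(\lambda \cdot \nu_x(H))$ variables. Thus the joint laws of $\{\animals(\{(x,H)\})\}$ and $\{N_{x,H}^\lambda\}$ coincide, and without loss of generality we may couple them so that $\animals(\{(x,H)\}) = N_{x,H}^\lambda$ for every $(x,H) \in \zoospace$.

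Finally, I would check that the two traces agree under this identification. By~\eqref{def:trace_of_animals_and_zoo},
\begin{equation*}
\trace(\animals) \;=\; \bigcup_{(x,H) \in \zoospace \,:\, \animals(\{(x,H)\}) > 0} H \;=\; \bigcup_{x \in V(\graph)} \bigcup_{H \in \latticeanimals_x \,:\, N_{x,H}^\lambda > 0} H,
\end{equation*}
and this set is visibly equal to the union~\eqref{def:trace_of_poisson_zoo} defining $\mathcal{S}_\nu^\lambda$, since the indexing $i=1,\ldots,N_{x,H}^\lambda$ merely records multiplicities that are irrelevant for the union. There is no real obstacle here; the only pitfall to watch is remembering that the PPP atom at $(x,H)$ can have multiplicity greater than one, but this plays no role when we pass to the union of traces.
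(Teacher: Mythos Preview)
Your proposal is correct and is exactly the verification the paper has in mind; in fact, the paper states this claim without proof, treating the identification as immediate from the definitions. Your careful unpacking---countability of $\zoospace$, independence of the atom counts of a PPP on a countable space, and matching these with the $N_{x,H}^\lambda$ of Definition~\ref{def:poisson_zoo}---is precisely the bookkeeping the reader is expected to supply.
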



The special case of  the random length worms model deserves a separate notation:

\begin{definition}
	\label{def:random_length_worms_as_PPP}
	Given a probability mass function $\eta \, : \, \N_{>0} \rightarrow [0,1]$ for the random length worms model of Definition~\ref{def:random_length_worms_model}, the law $\mathcal{Q}_{\nu}^\lambda$ of the corresponding Poisson point process on $\zoospace$ will instead be denoted by $\mathcal{P}^\lambda := \mathcal{P}_{\eta}^{\lambda}$.	
\end{definition}


The main reason for looking at the Poisson zoo through the lenses of Poisson point processes is that for the latter there is a well developed theory and hence many tools to use for analysis. Fortunately, there are only a few of them are relevant for our results, which we will collect here from \cite{Kingman1993} and \cite[Chapter 3]{Resnick2008} after translating them into our setting.

\begin{theorem}
	\label{thm:basic_properties_of_PPPs}
	Let $\animals = \sum_{i \in I} \delta_{(x,H)_i} \sim \mathcal{Q}_{\nu}^\lambda$ be as in Definition \ref{def:Poisson_zoo_as_PPP} and given any $A \in \zooalgebra$ introduce the restricted measure $\nu_A(\cdot) := \nu ( \cdot \cap A)$.
	\begin{enumerate}[label=(\roman*)]
		\item If $A \in \zooalgebra$, then the restricted process $\animals \ind [A]$ has law $\mathcal{Q}_{\nu_A}^\lambda$.
		\item If $A_1, A_2, \ldots \in \zooalgebra$ are such that $A_i \cap A_j = \emptyset$ for $i \neq j$, then the restricted processes $\animals \ind [A_1], \animals \ind[A_2], \ldots$ are independent.
		\item Let $p \in [0,1]$ and $\{ \xi_i \}_{i \in I}$ a collection of independent $\Bernoulli(p)$ random variables, and consider the following decomposition:
		\begin{equation}
			\animals = \animals_1 + \animals_2 := \sum_{i \in I} \xi_i \delta_{(x,H)_i} + \sum_{i \in I} (1 - \xi_i) \delta_{(x,H)_i}.
		\end{equation}
		Then $\animals_1 \sim \mathcal{Q}_{\nu}^{p v}$ and $\animals_2 \sim \mathcal{Q}_{\nu}^{ (1-p)v }$ in law, and they are independent of each other.
		\item If $A \in \zooalgebra$, then $\animals(A)$ follows a Poisson distribution.
	\end{enumerate}
\end{theorem}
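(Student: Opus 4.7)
The plan is to exploit that $\zoospace$ is countable (since $\graph$ is locally finite and every animal is finite), so a PPP with law $\mathcal{Q}_\nu^\lambda$ admits a completely concrete description: the multiplicities $N_{(x,H)} := \animals(\{(x,H)\})$, $(x,H) \in \zoospace$, form an independent family with $N_{(x,H)} \sim \POI(\lambda\,\nu((x,H)))$, and $\animals = \sum_{(x,H) \in \zoospace} N_{(x,H)}\,\delta_{(x,H)}$. All four parts then reduce to elementary manipulations of independent Poisson variables, and each assertion is a direct specialization of a standard PPP result that can be found, e.g., in \cite{Kingman1993} or \cite[Chapter 3]{Resnick2008}; I would include the short arguments for self-containment.

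For parts (i) and (ii): the restriction $\animals \ind[A]$ keeps the multiplicities $N_{(x,H)}$ with $(x,H) \in A$ and zeros out the rest, which is exactly the description of $\mathcal{Q}_{\nu_A}^\lambda$. For pairwise disjoint $A_j \in \zooalgebra$, the restrictions $\animals \ind[A_j]$ depend on disjoint sub-families of the independent collection $\{N_{(x,H)}\}_{(x,H) \in \zoospace}$, and are therefore automatically independent. No further work is needed.

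For part (iii), I would use Poisson thinning at each point of $\zoospace$ separately. Fix $(x,H) \in \zoospace$ and condition on $N_{(x,H)} = n$: the $\Bernoulli(p)$ marks split the $n$ copies into independent $\mathrm{Binomial}(n,p)$ and $\mathrm{Binomial}(n,1-p)$ counts, and the standard Poisson thinning identity for a single Poisson variable yields that the unconditional contributions to $\animals_1$ and $\animals_2$ at $(x,H)$ are independent $\POI(p\lambda\,\nu((x,H)))$ and $\POI((1-p)\lambda\,\nu((x,H)))$ variables. Independence across different animals then gives $\animals_1 \sim \mathcal{Q}_\nu^{p\lambda}$ and $\animals_2 \sim \mathcal{Q}_\nu^{(1-p)\lambda}$, independent of each other. (I read the exponents $pv$ and $(1-p)v$ in the statement as typographical variants for $p\lambda$ and $(1-p)\lambda$.) For part (iv), $\animals(A) = \sum_{(x,H) \in A} N_{(x,H)}$ is a countable sum of independent Poissons, hence itself Poisson with mean $\lambda\,\nu(A)$, degenerating to $+\infty$ almost surely when $\nu(A) = +\infty$.

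The only minor subtlety is $\sigma$-finiteness: the total intensity $\lambda\,\nu(\zoospace) = \lambda\,|V(\graph)|$ is infinite, but the decomposition $\zoospace = \bigsqcup_k \zoospace_k$ from \eqref{def:subzoo_of_fixed_volume_animals}, combined with the countability of $V(\graph)$, gives a countable partition into finite-intensity pieces on which the product-of-Poissons construction is tautologically well defined. No real obstacle arises beyond this; the theorem is essentially a bookkeeping lemma translating standard PPP facts into the notation used in Sections \ref{section:bounds_on_fat} and \ref{section:exploration_processes}.
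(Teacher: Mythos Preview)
Your proposal is correct, and in fact more detailed than what the paper does: the paper simply states these as standard PPP facts collected from \cite{Kingman1993} and \cite[Chapter 3]{Resnick2008}, with no proof given. Your approach of exploiting the countability of $\zoospace$ to reduce everything to an independent family of Poisson multiplicities $N_{(x,H)}$ is the natural concrete argument in this discrete setting, and matches the spirit of the references cited; including it for self-containment is harmless.
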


Let us note here that $(iii)$ is sometimes called the colouring property of the Poisson point process, as intuitively it means that colouring all points of the process with two colours, red and blue, independently of each other, both coloured processes are again Poisson point processes, with the appropriately compensated intensity measures.
Moreover, properties $(ii)$ and $(iv)$ are defining properties of the Poisson point process; see for example~\cite[Section 3.3]{Resnick2008}.

One consequence of the above theorem that we will use is that one can express the expectation of any nonnegative function $f(\animals)$ of a sample $\animals \sim \mathcal{Q}_{\nu}^{\lambda}$ using its intensity measure $\nu$. The proof of this formula follows the lines of \cite[Lemma 5.3]{RathRokob2022}, hence it is omitted.

\begin{lemma}
	\label{lemma:general_Campbell_formulas}
	Given $\animals = \sum_{i \in I} \delta_{(x,H)_i} \sim \mathcal{Q}_{\nu}^\lambda$ and a function $f \, : \, \zoospace \longrightarrow \R^+$, we have
	\begin{equation}
		\label{eq:general_Campbell_expectation_formula}
		\E \left[ \sum_{i \in I} f((x,H)_i) \right] =
		\lambda \cdot \sum_{ x \in V } \sum_{H \in \mathcal{H}_x}  f \left( (x,H) \right) \cdot \nu \left( (x,H) \right).
	\end{equation}
\end{lemma}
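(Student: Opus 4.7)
The plan is to exploit the fact that the universal zoo $\zoospace$ is countable, which reduces the Campbell formula to a straightforward Fubini/Tonelli argument combined with property (iv) of Theorem~\ref{thm:basic_properties_of_PPPs}. Since $\zoospace$ is countable, every nonnegative $f:\zoospace\to\R^+$ can be written as
\begin{equation*}
f((x,H))=\sum_{(y,K)\in\zoospace} f((y,K))\,\ind[(x,H)=(y,K)],
\end{equation*}
so that, pulling the (countable, nonnegative) sum outside,
\begin{equation*}
\sum_{i\in I} f((x,H)_i)=\sum_{(y,K)\in\zoospace} f((y,K))\cdot \animals\bigl(\{(y,K)\}\bigr).
\end{equation*}

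The next step is to take expectations on both sides. Because every term is nonnegative, Tonelli's theorem lets us swap the expectation with the countable sum, yielding
\begin{equation*}
\E\!\left[\sum_{i\in I} f((x,H)_i)\right]=\sum_{(y,K)\in\zoospace} f((y,K))\cdot \E\bigl[\animals(\{(y,K)\})\bigr].
\end{equation*}
Now I invoke property (iv) of Theorem~\ref{thm:basic_properties_of_PPPs}, applied to the singleton set $\{(y,K)\}\in\zooalgebra$: it gives $\animals(\{(y,K)\})\sim\POI(\lambda\,\nu((y,K)))$, and in particular
\begin{equation*}
\E\bigl[\animals(\{(y,K)\})\bigr]=\lambda\,\nu((y,K)).
\end{equation*}
Substituting this back and rewriting the sum over $\zoospace$ as the double sum $\sum_{x\in V}\sum_{H\in\latticeanimals_x}$ (which is exactly the definition of $\zoospace$ in~\eqref{def:universal_zoo}) produces the right-hand side of~\eqref{eq:general_Campbell_expectation_formula}.

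Since everything in sight is nonnegative, there is no obstacle requiring integrability or limiting arguments: no main difficulty is expected, and the whole proof amounts to a clean application of the countability of $\zoospace$, Tonelli, and the Poisson distribution of $\animals(A)$ on singletons. One could equivalently first establish the identity for indicator functions $f=\ind_A$ (where it reduces to $\E[\animals(A)]=\lambda\nu(A)$), then extend to simple functions by linearity, and finally to general nonnegative $f$ by monotone convergence; but the direct countable-sum argument is cleaner in this discrete setting.
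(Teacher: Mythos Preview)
Your proof is correct. The paper actually omits the proof entirely, saying only that it ``follows the lines of \cite[Lemma 5.3]{RathRokob2022}''; your countability-plus-Tonelli-plus-Poisson argument is exactly the standard derivation of the Campbell formula in the discrete setting and is what that reference does as well.
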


\section{First and second moment bounds on the fat}
\label{section:bounds_on_fat}

As we explained in Subsections~\ref{subsection:poisson_zoo_on_free_products} and~\ref{subsection:random_length_worms_model_introduction}, both of our theorems will be proved via certain exploration processes, fattening the so-far explored cluster of the origin step-by-step, by animals that intersect the exterior boundary of the current cluster but not the cluster itself. In both cases, we will need to have further restrictions on where these new animals touch the current cluster, making it useful to introduce the following notation.

\begin{definition}
	\label{def:point_measure_restricted_to_hit_B_but_not_the_closure_of_A}
	Given a point measure $\animals = \sum_{i \in I} \delta_{(x,H)_i} \in \zooPP$, some $R \in \N_{>0}$, a finite connected set $A \subseteq V(\graph)$ and a subset of its exterior boundary $B \subseteq \extboundary A$ let us introduce the following restriction of the point measure:
	\begin{equation}
		\label{eq:point_measure_restricted_to_hit_B_but_not_the_closure_of_A}
		\check{ \animals }_{A,B}^R :=
		\animals \, \ind \left[
			\zoospace^R(B) \setminus \zoospace^R \left( \overline{A} \setminus B \right)
		\right].
	\end{equation}
	That is, we consider those animals only that have volume at most $R$, hit $B$, but do not intersect $\overline{A}$ otherwise. Let us also introduce the notation 
	\begin{equation}
		\label{def:point_measure_restricted_to_hit_B_but_not_the_closure_of_A_total_local_size}
		  \check{\mu}_{A,B}^R (x) := \mu^{ \check{ \animals }_{A,B}^R }(x)
		   \qquad\text{and}\qquad 
		   \check{ \totalsize }_{A,B}^R := \totalsize^{ \check{ \animals }_{A,B}^R } = \sum_{x \in V(\graph)} \check{\mu}_{A,B}^R (x)
	\end{equation}
	for the (total) occupation measure of the restricted point measure.
\end{definition}

The exploration processes will be introduced and analysed in Section~\ref{section:exploration_processes}, but
quite evidently, whatever the exact exploration is, we will need to prove lower bounds on $| \trace( \check{ \animals }_{A,B}^R ) |$: in expectation for the branching process argument on free products; in probability for the more complicated exploration process for worms on general graphs. In the next subsection, we will argue that, instead of the trace, it is enough to examine $\check{ \totalsize }_{A,B}^R$, obviously easier to handle. Then, in Subsection~\ref{ss.fatfree}, we will give first moment lower bounds for general zoos on free products, and in Subsection~\ref{subsection:worms_on_nonamenable_graphs_fattening}, first moment lower and second moment upper bounds for worms.

%
%
%

\subsection{Size-biasing and neglecting multiplicities}

Recall the notation $m_k$ from Definition \ref{def:moments_of_nu}. Due to Lemma~\ref{lemma:two_silly_lemmas}~\ref{lemma:two_silly_lemmas:infinite_m1_implies_percolation}, we can assume that $m_1 < \infty$. Given a fixed integer $R \in \N_{>0} \cup \{ + \infty \}$, called the truncation parameter (whose value will be chosen later), and any $k \in \N_{>0}$, we introduce the notion of truncated moments as
\begin{equation}
	\label{def:truncated_moments_of_nu}
	m_k^{R} := \sum_{H \in \mathcal{H}_{\origin}} |H|^k \cdot \ind[|H| \leq R] \cdot \nu_{\origin}(H).
\end{equation}

We defined unimodularity of a transitive graph in Definition~\ref{def:unimodularity}, via the mass transport principle~\eqref{eq:MTP}. An important consequence is the following \emph{size-biasing} phenomenon.

\begin{lemma}[Size-biasing in the Poisson zoo]
	\label{lemma:size_biasing_for_Poisson_zoo}
	Let us assume that $\grp \subgrp \Aut(\graph)$ is a transitive and unimodular subgroup of automorphisms, $\nu_{\origin}$ is a $\grp$-rotation invariant measure on $\latticeanimals_{\origin}$, $\nu$ is the measure on $\zoospace$ induced from $\nu_{\origin}$ via \eqref{def:induced_measure_on_zoospace} and $\animals = \sum_{i \in I} \delta_{(x,H)_i} \sim \mathcal{Q}_{\nu}^\lambda$. Given some $R \in \N \cup \{ + \infty \}$ and $x \in V(\graph)$, consider the restricted processes $\animals^R := \animals \ind \left[ \zoospace^R \right]$ and $\animals_x^R := \animals \ind \left[ \zoospace^R( \{ x \} ) \right]$. Then we have
	\begin{equation}
		\label{eq:size_biasing_for_Poisson_zoo}
		\E \left[  \LTmeasure^{\animals^R}(x)  \right] = \lambda \cdot m_1^R
		\qquad \text{and} \qquad
		\E \left[ \totalsize^{ \animals_x^R } \right] = \lambda \cdot m_2^R.
	\end{equation}
\end{lemma}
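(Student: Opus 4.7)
The plan is to apply Campbell's formula (Lemma~\ref{lemma:general_Campbell_formulas}) to convert each expectation into a double sum over vertices and animals, then to exploit the mass transport principle~\eqref{eq:MTP} to swap the role of the fixed vertex $x$ with the dummy root, and finally to collapse the resulting sum using transitivity. Concretely, I would first take the test functions $g_1((y,H)) := \ind[x \in H] \cdot \ind[|H| \leq R]$ and $g_2((y,H)) := |H| \cdot \ind[x \in H] \cdot \ind[|H| \leq R]$ (with $x$ treated as an external parameter), so that Lemma~\ref{lemma:general_Campbell_formulas} gives
\begin{equation*}
\E\bigl[\LTmeasure^{\animals^R}(x)\bigr] = \lambda \sum_{y \in V(\graph)} F_1(y,x), \qquad
\E\bigl[\totalsize^{\animals_x^R}\bigr] = \lambda \sum_{y \in V(\graph)} F_2(y,x),
\end{equation*}
where $F_k(y,x) := \sum_{H \in \latticeanimals_y} g_k((y,H)) \cdot \nu_y(H)$ for $k = 1,2$.

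The next step is to verify that both $F_1$ and $F_2$ are diagonally $\grp$-invariant in $(y,x)$. Using the push-forward definition~\eqref{def:translated_prob_measure} with the choice $\varphi_{\gamma y} := \gamma \circ \varphi_y$, one obtains $\nu_{\gamma y}(\gamma H) = \nu_y(H)$; combined with the obvious equivariance of $|H|$ and of the incidence $x \in H$ under $\gamma$, this yields $F_k(\gamma y, \gamma x) = F_k(y,x)$. Then, applying MTP~\eqref{eq:MTP} to exchange arguments, $\sum_y F_k(y,x) = \sum_y F_k(x,y)$, and observing that in $F_k(x,y)$ the animal $H$ is now rooted at $x$ with indicator $\ind[y \in H]$, swapping the order of summation and using $\sum_{y \in V(\graph)} \ind[y \in H] = |H|$ collapses the $g_1$-version to $\sum_{H \in \latticeanimals_x} |H| \, \ind[|H|\le R]\, \nu_x(H)$ and the $g_2$-version to $\sum_{H \in \latticeanimals_x} |H|^2 \, \ind[|H|\le R]\, \nu_x(H)$. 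By transitivity, the push-forward $\nu_x$ has the same distribution of cardinalities as $\nu_\origin$, so these sums equal the truncated moments $m_1^R$ and $m_2^R$ from~\eqref{def:truncated_moments_of_nu}, establishing both claimed identities.

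The argument is essentially mechanical; the only step that requires real care is the diagonal invariance check, where one must track carefully how the $\grp_\origin$-rotation invariance of $\nu_\origin$ interacts with the push-forward definition of $\nu_y$. Unimodularity is genuinely indispensable here: without MTP, the mass ``collected at $x$'' need not match the mass ``emitted from $x$'', and indeed on non-unimodular transitive graphs such as Trofimov's grandparent graph (discussed just after Definition~\ref{def:unimodularity}) the identities would have to be corrected by a modular factor weighting each root.
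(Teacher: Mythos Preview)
Your proof is correct and follows essentially the same route as the paper: Campbell's formula, then the mass transport principle applied to a diagonally invariant kernel, then collapsing the sum via $\sum_y \ind[y\in H]=|H|$ and transitivity. The only cosmetic difference is bookkeeping: the paper parametrises animals rooted at $y$ as $\varphi_y(H)$ for $H\in\latticeanimals_\origin$, whereas you work directly with $H\in\latticeanimals_y$ and $\nu_y$; since $\nu_y$ is well-defined independently of the choice of $\varphi_y$ (as noted after~\eqref{def:translated_prob_measure}), your invariance check goes through without needing to single out the particular representative $\varphi_{\gamma y}=\gamma\circ\varphi_y$.
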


\begin{proof}
	Both equalities follow from the mass transport principle \eqref{eq:MTP} used with some appropriate mass transport functions.
	Both transport functions can be derived from the following collection of functions.
	For $H \in \latticeanimals_{\origin}$ and $y \in V(\graph)$ let us define
	\begin{equation}
		f_H^R(y,x) := \ind \left[ x \in \trace \left( y, \varphi_y(H) \right) \right] \cdot \ind \left[ |H| \leq R \right],
	\end{equation}
	where $\varphi_y \in \grp$ is the one granted by \eqref{def:the_distinguished_automorphism_taking_origin_to_x}.
	Note that $f_H^R$ is not necessarily a diagonally invariant function (hence not a mass transport function), since there could be automorphisms $\gamma \in \grp$  for which $\varphi_{ \gamma(y) }(H) \neq \gamma ( \varphi_y(H) )$ holds.
	However, the functions
	\begin{equation}
		\label{eq:mt_function_for_size_biasing}
		F_1^R(y,x) := \sum_{H \in \latticeanimals_{\origin}} f_H^R(y,x)
		\qquad \text{and} \qquad
		F_2^R(y,x) := \sum_{H \in \latticeanimals_{\origin}} |H| \cdot f_H^R(y,x)
	\end{equation}
	are mass transport functions.
	Indeed, it follows easily that they are diagonally invariant under the action of $\grp$, as it defines a permutation on $\latticeanimals_{\origin}$ without changing the volume of the sets.
	
	For the first equality of \eqref{eq:size_biasing_for_Poisson_zoo}, note that  $F_1^R(y,x)$ counts the number of animals of volume at most $R$ that are rooted in $y \in V(\graph)$ and contain $x$. Consequently,
	\begin{align*}
		\E \left[ \LTmeasure^{ \animals^R }(x) \right] 
		& \overset{\eqref{eq:general_Campbell_expectation_formula}}{=}
		\lambda \cdot \E_\nu \left[ \sum_{y \in V(\graph)} F_1^R(y,x) \right] \overset{\eqref{eq:MTP}}{=}
		\lambda \cdot \E_\nu \left[ \sum_{y \in V(\graph)} F_1^R(x,y) \right] \\ & \overset{(\ref{eq:mt_function_for_size_biasing},~\ref{def:translated_prob_measure})}{=}
		\lambda \cdot \sum_{H \in \latticeanimals_{\origin}} \left| H \right| \cdot \ind \left[ |H| \leq R \right] \cdot \nu_{\origin}(H)  \overset{\eqref{def:truncated_moments_of_nu}}{=}
		\lambda \cdot m_1^R.
	\end{align*}
	
	Meanwhile in $F_2^R(y,x)$ we also take into account the volume of the animals. 	
	Hence,
		\begin{align*}
		\E \left[ \totalsize^{ \animals_x^R } \right] 
		& \overset{\eqref{eq:general_Campbell_expectation_formula}}{=}
		\lambda \cdot \E_\nu \left[ \sum_{y \in V(\graph)} F_2^R(y,x) \right] \overset{\eqref{eq:MTP}}{=}
		\lambda \cdot \E_\nu \left[ \sum_{y \in V(\graph)} F_2^R(x,y) \right] \\ & \overset{(\ref{eq:mt_function_for_size_biasing},~\ref{def:translated_prob_measure})}{=}
		\lambda \cdot \sum_{H \in \latticeanimals_{\origin}} \left| H \right|^2 \cdot \ind \left[ |H| \leq R \right] \cdot \nu_{\origin}(H)  \overset{\eqref{def:truncated_moments_of_nu}}{=}
		\lambda \cdot m_2^R,
	\end{align*}
proving the second equality of \eqref{eq:size_biasing_for_Poisson_zoo}.
 \end{proof}

The appearance of the truncated second moment $m_2^R$ in the expected total occupation measure in~\eqref{eq:size_biasing_for_Poisson_zoo}, supplemented with the assumption that $m_2 = +\infty$, can be used to make this quantity sufficiently large, by taking $R$ large. Unimodularity is crucial for this, as shown by the following example.
 Recall from the paragraphs right after Definition~\ref{def:unimodularity} that a regular tree with a distinguished end is not unimodular.

\begin{remark}
	\label{remark:unimodularity_is_needed}
	Let $\hat{T}$ denote a decorated $3$-regular tree in which there is a distinguished infinite ray.	
	This fixed infinite ray marks a direction in the graph, namely upwards is when we are moving towards to the end represented by the ray.
	It is easy to see that $\Aut(\hat{T})$ acts transitively on the vertices, hence we can build a Poisson zoo on $\hat{T}$ from some $\Aut(\hat{T})$-rotation invariant measure $\nu$ on the animals of $\latticeanimals_{\origin}$ via Definition \ref{def:poisson_zoo}.
	
	We choose $\nu$ to be 
	\begin{equation}
		\nu (H) := \frac{1}{2^r} \cdot \prob \left( \mathcal{L} = r \right) \cdot \ind \left[ H \text{ is a downward path of length $r$ started from $\origin$} \right],
	\end{equation} 
	where $\mathcal{L}$ is such a random variable that $\prob ( \mathcal{L} = r ) \asymp r^{-3}$ for all $r \in \N_{>0}$.
	Since the automorphisms of $\Aut( \hat{T} )$ fix the distinguished infinite ray, $\nu$ will indeed be $\Aut(\hat{T})$-rotation invariant. Moreover, we are in the interesting regime, as $m_1 = \E[\mathcal{L}] < + \infty$ and $m_2 = \E [ \mathcal{L}^2 ] = + \infty$ hold.
	
	However, using the notation of Lemma \ref{lemma:size_biasing_for_Poisson_zoo}, for the expected total size of animals ever visiting some vertex $x \in V(\hat{T})$ (counted with multiplicity), we only have 
$$
		\E \left[ \totalsize^{ \animals_x^R } \right] \overset{(*)}{=}
		\lambda \sum_{r = 1}^R  \sum_{k = 0}^{r-1} 2^{r-k} \cdot \frac{r}{2^r} \cdot \prob \left( \mathcal{L} = r \right)
		\asymp \lambda \sum_{r = 1}^R  \frac{1}{r^2}  \sum_{k = 0}^{r-1} \frac{1}{2^k} \leq \lambda \sum_{r = 1}^\infty \frac{2}{r^2} < \infty\,,
		$$
	which is not controlled by the truncated second moment anymore, and hence can not be chosen arbitrarily large.
	Here in $(*)$ we used \eqref{eq:general_Campbell_expectation_formula} and the fact that the restriction to only downward paths implies that there is only one unique way to get to $x$ from any vertex above.	 
\end{remark}

The next lemma tells us that the expected number of animals hitting some vertex, given that there is at least one of such animal,  can be bounded from above by $\lambda \cdot m_1$. Moreover, this holds for any restricted zoo, as well. Hence, under the natural assumption that $m_1 < \infty$, we will obtain that neglecting multiplicities affects occupation measures only by bounded factors.

\begin{lemma}[Upper bound on the multiplicity]
	\label{lemma:upper_bound_on_expected_number_of_occupation}
	Under the assumptions of 	Lemma~\ref{lemma:size_biasing_for_Poisson_zoo},
	let $\animals = \sum_{i \in I} \delta_{(x,H)_i} \sim \mathcal{Q}_{\nu}^\lambda$.
	Given any $A \in \zooalgebra$, let us denote the restricted zoo by $\bar{ \animals } = \animals \, \ind \left[ A \right]$ and the corresponding total occupation measure by $ \bar{\LTmeasure}(x) := \LTmeasure^{ \bar{ \animals } }(x)$.
	 Then we have
	\begin{equation}
		\label{eq:upper_bound_on_expected_number_of_occupation}
		\sup_{x \in V(\graph)} \E \left. \left[ \bar{\LTmeasure}(x) \, \right| \, \bar{\LTmeasure}(x) > 0 \right] \leq \lambda \cdot m_1 + 1.
	\end{equation}
If $\bar{ \animals }$ and $x$ are such that $\bar{\LTmeasure}(x)=0$ a.s., we consider the conditional expectation to be 0.

\end{lemma}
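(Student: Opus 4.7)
My plan is as follows. The key observation is that $\bar{\LTmeasure}(x)$ can be rewritten as a count of points of a Poisson point process in a measurable set, so it is itself Poisson distributed. Indeed,
$$\bar{\LTmeasure}(x) = \sum_{(y,H) \in \bar{\animals}} \ind[x \in H] = \bar{\animals}\bigl(\zoospace(\{x\})\bigr) = \animals\bigl(A \cap \zoospace(\{x\})\bigr),$$
and by Theorem~\ref{thm:basic_properties_of_PPPs}(iv) this is Poisson with some parameter $\mu_x \geq 0$. (If $\mu_x = 0$ we are in the excluded degenerate case.)

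Next I would bound the parameter $\mu_x$. Since $\bar{\animals} \leq \animals$ as point measures, we have $\bar{\LTmeasure}(x) \leq \LTmeasure^{\animals}(x)$ pointwise, so
$$\mu_x = \E\bigl[\bar{\LTmeasure}(x)\bigr] \leq \E\bigl[\LTmeasure^{\animals}(x)\bigr] = \lambda \cdot m_1,$$
where the last equality is the $R = +\infty$ case of the size-biasing identity in Lemma~\ref{lemma:size_biasing_for_Poisson_zoo}. This step is precisely where unimodularity is used (implicitly, via the preceding lemma).

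Finally, I would invoke the elementary fact that for $N \sim \POI(\mu)$ with $\mu > 0$,
$$\E[N \mid N > 0] = \frac{\mu}{1 - e^{-\mu}} \leq \mu + 1.$$
The inequality is equivalent to $(\mu+1)e^{-\mu} \leq 1$, which holds since the function $g(\mu) := (\mu+1)e^{-\mu}$ has $g(0) = 1$ and $g'(\mu) = -\mu e^{-\mu} \leq 0$, so $g$ is non-increasing on $[0, \infty)$. Applying this with $\mu = \mu_x$ and combining with the previous bound yields
$$\E\bigl[\bar{\LTmeasure}(x) \bigm| \bar{\LTmeasure}(x) > 0\bigr] \leq \mu_x + 1 \leq \lambda \cdot m_1 + 1,$$
uniformly in $x$, as claimed.

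There is no real obstacle here: the statement is essentially ``the size-biased distribution of a Poisson random variable is shifted by at most one,'' combined with the trivial monotonicity $\bar{\animals} \leq \animals$ to reduce the unrestricted moment to the size-biasing identity already established. The only mild care needed is the edge case $\mu_x = 0$, handled by the convention stated in the lemma.
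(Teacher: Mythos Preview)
Your proof is correct and follows essentially the same approach as the paper: identify $\bar{\LTmeasure}(x)$ as a Poisson random variable via Theorem~\ref{thm:basic_properties_of_PPPs}(iv), bound its parameter by $\lambda\cdot m_1$ using monotonicity and the size-biasing Lemma~\ref{lemma:size_biasing_for_Poisson_zoo}, and apply the elementary inequality $\mu/(1-e^{-\mu})\le\mu+1$. You even supply a cleaner justification of that last inequality than the paper, which simply attributes it to ``basic probability theory and real analysis.''
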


\begin{proof}
	By $(iv)$ of Theorem \ref{thm:basic_properties_of_PPPs}, it follows that, for any $x \in V(\graph)$, the distribution of the random variable $\bar{\LTmeasure}(x)$ is Poisson with parameter $\E \left[ \bar{ \LTmeasure }(x) \right]$.
	Moreover, if the restriction given by $A$ on $\animals$ is such that $\bar{\LTmeasure}(x)=0$ for all $x$, then \eqref{eq:upper_bound_on_expected_number_of_occupation} follows trivially. Hence let us assume that this is not the case: there exists at least one $x \in V(\graph)$ such that $\bar{\LTmeasure}(x)$ is non-trivial.
	
	From basic probability theory and real analysis $(*)$, for any such $x \in V(\graph)$ we can write the following upper bound:
	\begin{equation}
		\label{eq:upper_bound_on_expected_number_of_occupations_given_there_is_one}
		\E \left. \left[ \bar{\LTmeasure}(x) \, \right| \, \bar{\LTmeasure}(x) > 0  \right] =
		\frac{\E \left[ \bar{\LTmeasure}(x) \right]}{ \prob \left( \bar{\LTmeasure}(x) > 0 \right)} =
		\frac{\E \left[ \bar{\LTmeasure}(x) \right]}{ 1- \exp \left\{ - \E \left[ \bar{\LTmeasure}(x) \right] \right\} } \overset{(*)}{\leq} 
		\E \left[ \bar{\LTmeasure}(x) \right] + 1.
	\end{equation}
	At the same time, the expectation appearing on the right-hand side of \eqref{eq:upper_bound_on_expected_number_of_occupations_given_there_is_one} can be bounded using Lemma \ref{lemma:size_biasing_for_Poisson_zoo} as
	\begin{equation}
		\label{eq:MTP_argument_for_expected_number_of_occupations}
		\E \left[ \bar{\LTmeasure}(x) \right] \overset{(**)}{\leq} 
		\E \left[ \LTmeasure^{ \animals }(x) \right] \overset{\eqref{eq:size_biasing_for_Poisson_zoo}}{=} \lambda \cdot m_1,
	\end{equation}
	where $(**)$ holds since we left the restriction on the animals.
	Putting \eqref{eq:MTP_argument_for_expected_number_of_occupations} back into \eqref{eq:upper_bound_on_expected_number_of_occupations_given_there_is_one} we arrived at an upper bound that does not depend on $x$, hence yields the desired result.
\end{proof}

The previous lemma will only be used via the following corollary:

\begin{corollary}[Neglecting multiplicities]
	\label{coro:expected_size_without_and_with_multiplicity}
	Under the assumptions of 	Lemma~\ref{lemma:size_biasing_for_Poisson_zoo},
	let $\animals = \sum_{i \in I} \delta_{(x,H)_i} \sim \mathcal{Q}_{\nu}^\lambda$.
	Given any $A \in \zooalgebra$, let us denote the restricted zoo by $\bar{ \animals } = \animals \, \ind \left[ A \right]$ and the corresponding total occupation measure, as in~\eqref{def:point_measure_restricted_to_hit_B_but_not_the_closure_of_A_total_local_size}, by
	\begin{equation*}
		\bar{ \totalsize } := 
		\totalsize^{ \bar{ \animals } } :=
		\sum_{x \in V(\graph)} \bar{\LTmeasure}(x) :=
		\sum_{x \in V(\graph)} \LTmeasure^{ \bar{ \animals } }(x).
	\end{equation*}
	Then we have
	\begin{equation}
		\label{eq:expected_size_without_and_with_multiplicity}
		\E \left[ \left| \trace \left( \bar{\animals} \right) \right| \right] \geq 
		\frac{1}{\lambda \cdot m_1 + 1} \cdot \E \left[ \bar{\totalsize} \right].
	\end{equation}
\end{corollary}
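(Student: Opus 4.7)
The plan is to deduce this corollary directly from Lemma~\ref{lemma:upper_bound_on_expected_number_of_occupation} by decomposing both sides over vertices and applying linearity of expectation. The trace $\trace(\bar{\animals})$ is precisely the set of vertices $x \in V(\graph)$ with $\bar{\LTmeasure}(x) > 0$, so
\begin{equation*}
\left| \trace(\bar{\animals}) \right| = \sum_{x \in V(\graph)} \ind \left[ \bar{\LTmeasure}(x) > 0 \right],
\qquad
\bar{\totalsize} = \sum_{x \in V(\graph)} \bar{\LTmeasure}(x).
\end{equation*}

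Next, for each $x \in V(\graph)$, I would split the expectation of $\bar{\LTmeasure}(x)$ by conditioning on whether the vertex is hit, using the convention stipulated in Lemma~\ref{lemma:upper_bound_on_expected_number_of_occupation} for vertices that are a.s.\ not hit:
\begin{equation*}
\E \left[ \bar{\LTmeasure}(x) \right] =
\E \left. \left[ \bar{\LTmeasure}(x) \,\right|\, \bar{\LTmeasure}(x) > 0 \right] \cdot \prob \left( \bar{\LTmeasure}(x) > 0 \right).
\end{equation*}
Applying the uniform bound $\E[\bar{\LTmeasure}(x) \mid \bar{\LTmeasure}(x) > 0] \leq \lambda \cdot m_1 + 1$ from \eqref{eq:upper_bound_on_expected_number_of_occupation} gives
\begin{equation*}
\E \left[ \bar{\LTmeasure}(x) \right] \leq (\lambda \cdot m_1 + 1) \cdot \prob \left( \bar{\LTmeasure}(x) > 0 \right) = (\lambda \cdot m_1 + 1) \cdot \E \left[ \ind \left[ \bar{\LTmeasure}(x) > 0 \right] \right].
\end{equation*}

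Finally, summing over $x \in V(\graph)$ and using Tonelli/linearity of expectation on both sides yields
\begin{equation*}
\E \left[ \bar{\totalsize} \right] \leq (\lambda \cdot m_1 + 1) \cdot \E \left[ \left| \trace(\bar{\animals}) \right| \right],
\end{equation*}
which is a rearrangement of \eqref{eq:expected_size_without_and_with_multiplicity}. There is no real obstacle here: the previous lemma has already done the essential work via the mass transport principle, and this corollary is just the bookkeeping step that converts a per-vertex bound into a global comparison between the total occupation measure (counting multiplicities) and the size of the trace (ignoring them).
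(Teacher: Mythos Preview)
Your proof is correct and follows essentially the same approach as the paper: both arguments write $|\trace(\bar{\animals})| = \sum_{x} \ind[\bar{\LTmeasure}(x) > 0]$, decompose $\E[\bar{\totalsize}]$ vertex by vertex via $\E[\bar{\LTmeasure}(x)] = \E[\bar{\LTmeasure}(x) \mid \bar{\LTmeasure}(x) > 0] \cdot \prob(\bar{\LTmeasure}(x) > 0)$, apply the uniform bound from Lemma~\ref{lemma:upper_bound_on_expected_number_of_occupation}, and sum. The only cosmetic difference is that the paper pulls out the supremum over $x$ before summing, whereas you bound each term individually first; the content is identical.
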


\begin{proof}
	From \eqref{def:trace_of_animals_and_zoo} it follows that we have
	\begin{equation}
		\label{obs:size_of_fat}
		\left| \trace \left( \bar{\animals} \right) \right| = \sum_{x \in V(\graph)} \ind \left[ \bar{ \LTmeasure }(x) > 0 \right].
	\end{equation}
	Hence the law of total expectation and Lemma \ref{lemma:upper_bound_on_expected_number_of_occupation} yield
	\begin{align*}
	   \E \left[ \bar{\totalsize} \right] &= 
		\sum_{x \in V(\graph)} \E \left[ \bar{\LTmeasure} (x) \right] =
		\sum_{x \in V(\graph)} \E \left[ \bar{\LTmeasure} (x)  \, \left| \, \bar{\LTmeasure} (x) > 0 \right. \right] \cdot \prob \left( \bar{\LTmeasure} (x) > 0 \right) \\ 
		& \leq
		\sup_{x \in V(\graph)} \E \left. \left[ \bar{ \LTmeasure }(x) \, \right| \, \bar{\LTmeasure}(x) > 0 \right] \cdot \E \left[ \sum_{x \in V(\graph)} \ind \left[ \bar{ \LTmeasure }(x) > 0 \right] \right] \overset{\eqref{eq:upper_bound_on_expected_number_of_occupation}}{\leq}
		(\lambda \cdot m_1 + 1) \cdot \E \left[ \left| \trace \left( \bar {\animals} \right) \right| \right],
	\end{align*}
	from which our statement follows after a rearrangement.
\end{proof}

\subsection{The fat of general animals on free products}\label{ss.fatfree}

Taking into account the parity problem of a free product, Figure~\ref{f.free2}, it is possible that the expected size of the fat can only be large when it comes from the direction of one of the components of the free product.
This favoured direction may depend on not only the chosen rotation invariant measure $\nu_{\origin}$, but also on the cutoff $R$, resulting in the awkward split formulation of the next two statements.

\begin{lemma}
	\label{lemma:lower_bound_on_expected_total_size_of_animals_hitting_one_vertex_on_free_product}
	Consider a nonamenable graph $\graph$ that can be obtained as a free product $\graph_1 \star \graph_2$ of two transitive graphs, and let $\grp \subgrp \Aut(\graph)$ be such that it acts transitively and unimodularly on $\graph$.
	Let $\nu_{\origin}$ be a $\grp$-rotation invariant measure on $\latticeanimals_{\origin}$, then $\nu$ the measure on $\zoospace$ induced from $\nu_{\origin}$ via \eqref{def:induced_measure_on_zoospace}, and $\animals = \sum_{i \in I} \delta_{(x,H)_i} \sim \mathcal{Q}_{\nu}^\lambda$.
	Given any $R \in \N_{>0}$ and $x \in V(\graph)$, for the restriction of the process introduced in Definition \ref{def:point_measure_restricted_to_hit_B_but_not_the_closure_of_A}, depending on $\nu_{\origin}$ and maybe $R$, at least one of 
	\begin{equation}
		\label{eq:lower_bound_on_expected_total_size_of_animals_hitting_one_vertex_on_free_product}
		\E \left[ \check{ \totalsize }_{\{x\}, \, \mathcal{N}_1(x) }^R \right] \geq \tfrac{\Cheeger(\graph)}{2} \cdot \lambda \cdot m_2^R
		\qquad \text{or} \qquad
		\E \left[ \check{ \totalsize }_{\{x\}, \, \mathcal{N}_2(x)}^R \right] \geq \tfrac{\Cheeger(\graph)}{2} \cdot \lambda \cdot m_2^R
	\end{equation}
	holds.	
\end{lemma}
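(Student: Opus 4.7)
The plan is to prove the slightly stronger statement that the \emph{sum} of the two expectations in \eqref{eq:lower_bound_on_expected_total_size_of_animals_hitting_one_vertex_on_free_product} is at least $\Cheeger(\graph) \cdot \lambda \cdot m_2^R$, after which pigeonhole gives that the maximum is at least $\frac{\Cheeger(\graph)}{2} \lambda \, m_2^R$. The first step is to exploit the free product structure through Claim~\ref{claim:properties_of_free_product_of_graphs}. Since the two copies of $\graph_i$ attached at $x$ meet only at $x$, any path in $V(\graph) \setminus \{x\}$ from $\mathcal{N}_1(x)$ to $\mathcal{N}_2(x)$ must pass through $x$, which is impossible. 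Hence a connected $H \not\ni x$ can hit at most one of $\mathcal{N}_1(x), \mathcal{N}_2(x)$. This means that $\check{\animals}_{\{x\},\mathcal{N}_1(x)}^R$ and $\check{\animals}_{\{x\},\mathcal{N}_2(x)}^R$ are disjoint collections, and their union is exactly the point measure of animals of volume $\leq R$ having $x$ in their exterior boundary.

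Next, I would compute this combined expectation via a mass transport argument. Define
\[
f(y,x) \,:=\, \sum_{H \in \latticeanimals_y} |H| \cdot \ind[\,|H|\le R,\ x \in \extboundary H\,] \cdot \nu_y(H),
\]
which is diagonally $\grp$-invariant: using the bijection $H \mapsto \gamma^{-1}(H)$ from $\latticeanimals_{\gamma(y)}$ to $\latticeanimals_y$, together with the rotation invariance of $\nu$ and the fact that $\gamma(x) \in \extboundary \gamma(H) \iff x \in \extboundary H$, one recovers $f(y,x) = f(\gamma(y), \gamma(x))$. Combining the MTP \eqref{eq:MTP} with the Campbell formula \eqref{eq:general_Campbell_expectation_formula} then gives
\[
\E\bigl[\check{\totalsize}_{\{x\},\mathcal{N}_1(x)}^R\bigr] + \E\bigl[\check{\totalsize}_{\{x\},\mathcal{N}_2(x)}^R\bigr]
\,=\, \lambda \sum_{y \in V} f(y,x)
\,=\, \lambda \sum_{y \in V} f(x,y)
\,=\, \lambda \!\!\sum_{\substack{H \in \latticeanimals_x \\ |H|\le R}}\!\! |H|\cdot|\extboundary H|\cdot\nu_x(H),
\]
where in the last equality the inner sum $\sum_y \ind[y \in \extboundary H]$ collapses to $|\extboundary H|$.

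Finally, since $H$ is a finite nonempty subset of $V(\graph)$, the definition of the Cheeger constant \eqref{def:Cheeger_constant} gives $|\extboundary H| \geq \Cheeger(\graph) \cdot |H|$, so the right-hand side above is at least $\Cheeger(\graph) \cdot \lambda \cdot m_2^R$ by the definition of $m_2^R$ in \eqref{def:truncated_moments_of_nu}. Pigeonhole then yields \eqref{eq:lower_bound_on_expected_total_size_of_animals_hitting_one_vertex_on_free_product}. The main subtlety of the argument lies in the first step: without the free product cutpoint structure, an animal $H \not\ni x$ could simultaneously touch both $\mathcal{N}_1(x)$ and $\mathcal{N}_2(x)$, so it would be excluded from \emph{both} $\check{\animals}_{\{x\},\mathcal{N}_i(x)}^R$ despite being counted once in the MTP/Cheeger computation, and the identification of the sum with $\E[\check{\totalsize}_{\{x\}, \extboundary\{x\}}^R]$ would fail. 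The MTP computation and the Cheeger inequality are routine once the structural observation is in place.
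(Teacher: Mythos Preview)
Your proof is correct and follows essentially the same approach as the paper's own proof: both use the free product cutpoint structure to identify the sum $\E[\check{\totalsize}_{\{x\},\mathcal{N}_1(x)}^R] + \E[\check{\totalsize}_{\{x\},\mathcal{N}_2(x)}^R]$ with the expected total volume of animals having $x$ on their exterior boundary, compute this via Campbell and the mass transport principle as $\lambda \sum_{H,\,|H|\le R} |H|\,|\extboundary H|\,\nu_o(H)$, bound it below by $\Cheeger(\graph)\,\lambda\, m_2^R$ via the Cheeger constant, and finish by pigeonhole. The only cosmetic difference is that the paper phrases the dichotomy in terms of which side of the cutpoint the root lies in (the subgraph $\graph^x$ versus its complement), while you phrase it in terms of which $\mathcal{N}_i(x)$ the animal intersects; by the cutpoint property these are equivalent.
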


\begin{proof}
	Although the proof follows from the same ideas as of Lemma \ref{lemma:size_biasing_for_Poisson_zoo}, let us provide a detailed explanation.
	Due to \eqref{def:the_distinguished_automorphism_taking_origin_to_x}, we can define the functions:
	\begin{equation}
		f_H^R(y,z) := |H| \cdot \ind \big[ z \in \extboundary \trace \left( y, \varphi_y(H) \right) \big] \cdot \ind \left[ |H| \leq R \right],
		\qquad
		y,\, z \in V(\graph), \; H \leq \latticeanimals_{\origin}.
	\end{equation}
	In words, $f_H^R(y,z)$ is a transport function in which $y$ sends weight $|H|$ to $z$ if the latter is an element of the exterior boundary of $\varphi_{y}(H)$ for $H \in \latticeanimals_{\origin}$ of size at most $R$.
	As before, $f_H^R$ is not necessarily a diagonally invariant function.
	However, if we take the sum of these functions over all $H \in \latticeanimals_{\origin}$, then the resulting function 
		will be invariant:
	\begin{equation}
		\label{def:MTP_function_for_animals_of_free_product}
		F^R(y,z) := \sum_{H \in \latticeanimals_{\origin}} f_H^R(y,z); 
		\qquad
		y, z \in V(\graph).
	\end{equation} 
		
	Now, if we denote by $\graph^x$ the connected component of $\graph$ that contains $x$ after erasing all the edges $(x, z)$, $z  \in \mathcal{N}_2(x)$, then the expectations of \eqref{eq:lower_bound_on_expected_total_size_of_animals_hitting_one_vertex_on_free_product} can be rewritten as
	\begin{equation*}
		\label{eq:expected_weight_coming_from_graph1_component}
		\E \left[ \check{ \totalsize }_{\{x\}, \, \mathcal{N}_1(x)}^R \right] \overset{\eqref{eq:general_Campbell_expectation_formula}}{=}
		\lambda \cdot \E_\nu \left[ \sum_{y \in V(\graph^x)} F^R(y,x) \right]
	\end{equation*}
	and
	\begin{equation*}
		\label{eq:expected_weight_coming_from_graph2_component}
		\E \left[ \check{ \totalsize }_{\{x\}, \, \mathcal{N}_2(x) }^R \right] \overset{\eqref{eq:general_Campbell_expectation_formula}}{=}
		\lambda \cdot \E_\nu \left[ \sum_{y \in V(\graph) \setminus V(\graph^x)} F^R(y,x) \right].
	\end{equation*}
	Indeed, since the vertex $x$ is a cutpoint of the graph (see Claim~\ref{claim:properties_of_free_product_of_graphs}), the exterior boundary of a trace of an animal can contain $x$ but avoid all the vertices of $\mathcal{N}_2(x)$ only if its root is in the subgraph $\graph^x$ (or more precisely in $V(\graph^x) \setminus \{ x \}$), and can avoid $\mathcal{N}_1(x)$ only if the root is in $V(\graph) \setminus V(\graph^x)$.
Then, obviously,
	\begin{equation}
		\label{eq:decomposition_of_the_whole_incoming_expected_sum_into_incoming_weights_from_different_directions}
		\E_\nu \left[ \sum_{y \in V(\graph)} F^R(y,x) \right] = 
		\E_\nu \left[ \sum_{V(\graph^x)} F^R(y,x) \right] + \E_\nu \left[ \sum_{V(\graph) \setminus V(\graph^x)} F^R(y,x) \right],
	\end{equation}
	whose left-hand side can be lower bounded, using the mass transport principle at step $(*)$, as
	\begin{align*}
		\E_\nu \left[ \sum_{y \in V(\graph)} F^R(y,x) \right] & \overset{(*)}{=}
		\E_\nu \left[ \sum_{y \in V(\graph)} F^R(x,y) \right] \overset{\eqref{def:MTP_function_for_animals_of_free_product}}{=}
		\sum_{H \in \latticeanimals_{\origin}, \, |H| \leq R} \left| \extboundary H \right| \cdot |H| \cdot \nu_{\origin}(H) \\ & \overset{(**)}{\geq}
		\Cheeger(\graph) \cdot \sum_{H \in \latticeanimals_{\origin}, \, |H| \leq R} |H|^2 \cdot \nu_{\origin}(H) =
		\Cheeger(\graph) \cdot m_2^R.
	\end{align*}
	Here $(**)$ follows from the assumed nonamenability of the graph $\graph$.
	Consequently, to finish the proof one only needs to note that depending on the distribution $\nu_{\origin}$ and maybe on the given $R$, either the first  or the second term on the right-hand side of 		\eqref{eq:decomposition_of_the_whole_incoming_expected_sum_into_incoming_weights_from_different_directions} is at least as large as the other, hence at least half of the sum.	Combining the above inequalities gives the result.
\end{proof}

What we really need is the size of the trace, i.e., the union of the fattening animals without multiplicities.
This is an immediate consequence of Lemma \ref{lemma:lower_bound_on_expected_total_size_of_animals_hitting_one_vertex_on_free_product} and Corollary \ref{coro:expected_size_without_and_with_multiplicity}.

\begin{corollary}
	\label{coro:lower_bound_on_expected_size_of_fat_of_animals_through_one_vertex_on_free_product}
	Let us consider the nonamenable graph $\graph$ that can be obtained as a free product $\graph_1 \star \graph_2$ of two transitive graphs, and let $\grp \subgrp \Aut(\graph)$ act transitively and unimodularly on $\graph$.
	Let $\nu_{\origin}$ be a $\grp$-rotation invariant measure on $\latticeanimals_{\origin}$, then $\nu$ the measure on $\zoospace$ induced from $\nu_{\origin}$ via \eqref{def:induced_measure_on_zoospace}, and $\animals = \sum_{i \in I} \delta_{(x,H)_i} \sim \mathcal{Q}_{\nu}^\lambda$.
	Given any $R \in \N_{>0}$ and $x \in V(\graph)$, for the restriction of the process introduced in Definition \ref{def:point_measure_restricted_to_hit_B_but_not_the_closure_of_A}, depending on $\nu_{\origin}$ and maybe $R$, at least one of 
	\begin{equation}
		\label{eq:lower_bound_on_expected_size_of_fat_of_animals_through_one_vertex_on_free_product}
		\E \left[ \left| \trace \left( \check{ \animals }_{\{x\}, \, \mathcal{N}_1(x) }^R \right) \right| \right] \geq
		\tfrac{\Cheeger(\graph) \cdot \lambda}{2( \lambda \cdot m_1 + 1 )} \cdot m_2^R
		\quad \text{or} \quad
		\E \left[ \left| \trace \left( \check{ \animals }_{\{x\}, \, \mathcal{N}_2(x)}^R \right) \right| \right] \geq \tfrac{\Cheeger(\graph) \cdot \lambda}{2( \lambda \cdot m_1 + 1 )} \cdot m_2^R
	\end{equation}
	holds. \hfill $\square$
\end{corollary}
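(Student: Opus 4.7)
The plan is to combine the two ingredients invoked in the statement. First, I would apply Lemma \ref{lemma:lower_bound_on_expected_total_size_of_animals_hitting_one_vertex_on_free_product} to the point process $\animals \sim \mathcal{Q}_\nu^\lambda$, which tells us that, depending on the measure $\nu_{\origin}$ and the cutoff $R$, at least one of
\begin{equation*}
\E\bigl[\check{\totalsize}_{\{x\},\,\mathcal{N}_1(x)}^R\bigr] \geq \tfrac{\Cheeger(\graph)}{2}\cdot \lambda \cdot m_2^R
\quad\text{or}\quad
\E\bigl[\check{\totalsize}_{\{x\},\,\mathcal{N}_2(x)}^R\bigr] \geq \tfrac{\Cheeger(\graph)}{2}\cdot \lambda \cdot m_2^R
\end{equation*}
holds; call the favoured direction $i \in \{1,2\}$.

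Next, the restricted process $\check{\animals}_{\{x\},\,\mathcal{N}_i(x)}^R$ from Definition \ref{def:point_measure_restricted_to_hit_B_but_not_the_closure_of_A} is itself of the form $\animals\, \ind[A]$ for $A$ the indicator of the allowable animals (those of volume at most $R$ hitting $\mathcal{N}_i(x)$ but avoiding $\overline{\{x\}}\setminus \mathcal{N}_i(x) = \{x\}$). Hence Corollary \ref{coro:expected_size_without_and_with_multiplicity} applies to $\bar{\animals} := \check{\animals}_{\{x\},\,\mathcal{N}_i(x)}^R$ and yields
\begin{equation*}
\E\Bigl[\,\bigl|\trace\bigl(\check{\animals}_{\{x\},\,\mathcal{N}_i(x)}^R\bigr)\bigr|\,\Bigr]
\;\geq\; \frac{1}{\lambda \cdot m_1 + 1}\cdot \E\bigl[\check{\totalsize}_{\{x\},\,\mathcal{N}_i(x)}^R\bigr].
\end{equation*}

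Chaining these two inequalities for the winning index $i$ gives the desired bound. There is no genuine obstacle here since both ingredients have already been established; the only thing to check is that the set $A$ defining the restriction is measurable with respect to $\zooalgebra$ (which it plainly is, as $\zoospace^R(B)$ and $\zoospace^R(\overline{A}\setminus B)$ are, by Definition \ref{def:set_hitting_animals}), so Corollary \ref{coro:expected_size_without_and_with_multiplicity} indeed applies. This yields \eqref{eq:lower_bound_on_expected_size_of_fat_of_animals_through_one_vertex_on_free_product} with the same direction $i$ for which the lemma produced the lower bound, completing the proof.
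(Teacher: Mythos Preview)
Your proof is correct and matches the paper's own approach, which also derives the corollary directly from Lemma~\ref{lemma:lower_bound_on_expected_total_size_of_animals_hitting_one_vertex_on_free_product} and Corollary~\ref{coro:expected_size_without_and_with_multiplicity}. One inconsequential slip: $\overline{\{x\}}\setminus\mathcal{N}_i(x)=\{x\}\cup\mathcal{N}_j(x)$ for $j\neq i$, not just $\{x\}$, but this does not affect the argument since the restriction is still of the form $\animals\,\ind[A]$ with $A\in\zooalgebra$.
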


\subsection{The fat of worms on nonamenable graphs}
\label{subsection:worms_on_nonamenable_graphs_fattening}

Recall the random length worms model from Definition~\ref{def:random_length_worms_as_PPP}, and the strategy outlined in  Subsection~\ref{subsection:random_length_worms_model_introduction}. Compared to general animals on free products, one complication is that we do not have the abundance of cut vertices that made it easy (except for the parity issue) to find animals bouncing back from the exterior boundary of the current cluster.  But worms have an advantage here: they can navigate more freely on the vertices. Thus, for any vertex $x$ in the exterior boundary of a so far explored set $A$, provided that it is reasonably accessible for random walks started outside of $A$, the expected number of worms bouncing back from $x$ will be large; this will be Lemma~\ref{lemma:lower_bound_on_the_expected_total_size_of_worms_hitting_one_vertex}. Moreover, due to random walk capacity being linear in $|A|$, see Lemma~\ref{lemma:capacity_bound}, the number of ``reasonably accessible'' vertices is  linear. More precisely, we will use Corollary~\ref{coro:lower_bound_on_partial_square_term_capacity}, together with Corollary~\ref{coro:expected_size_without_and_with_multiplicity} on neglecting multiplicities, to establish a good lower bound on the expected size of the fat, Corollary~\ref{coro:lower_bound_on_expected_size_of_fat_worms_on_nonamenable_graphs}. 

Compared to free products, here comes the second complication: on a general nonamenable graph, the worms bouncing back from different exterior vertices can get tangled in the newly explored set, hence it is unclear how to define a branching process that the exploration dominates. Instead, we will need to work with the entire fat together, and a large expectation will not suffice any more --- we will also need a variance upper bound, to be given in Lemma~\ref{lemma:upper_bound_on_the_second_moment_of_the_occupied_sets_size_for_worms}.

Here comes our first moment lower bound for a single exterior vertex.

\begin{lemma}
	\label{lemma:lower_bound_on_the_expected_total_size_of_worms_hitting_one_vertex}
	Let us consider a nonamenable graph $\graph$ with a subgroup of its automorphisms $\grp \subgrp \Aut(\graph)$ that acts transitively on $\graph$. Let $\eta \, : \, \N_{>0} \rightarrow [0,1]$ be a probability mass function and $\animals = \sum_{i \in I} \delta_{(x,H)_i} \sim \mathcal{P}_{\eta}^\lambda$.
	Given some $R \in \N_{>0}$, a finite connected set $A \fsubset V(\graph)$ and a vertex from its exterior boundary, $x \in \extboundary A$, let us consider
	the total occupation measure $\check{ \totalsize }_{A, \{ x\}}^R$ corresponding to the restricted process introduced in \eqref{eq:point_measure_restricted_to_hit_B_but_not_the_closure_of_A}.
	For any $\veps > 0$, there exists some $c(\veps) > 0$ such that we have
	\begin{equation}
		\label{eq:lower_bound_on_the_expected_total_size_of_worms_hitting_one_vertex}
		\E \big[ \check{ \totalsize }_{A, \{ x \}}^R \big] \geq c(\veps) \cdot
		\lambda \cdot 
		\E \left[ \mathcal{L}^2 \, \ind \left[ \mathcal{L} \leq R \right] \right] \cdot  
		P_x \left( T_{\overline{A}}^+ = +\infty \right) \cdot 
		\left( P_x \left( T_{\overline{A}}^+ = +\infty \right) - \veps \right),
	\end{equation}
	where $\mathcal{L}$ denotes a random variable that has law $\eta$.
\end{lemma}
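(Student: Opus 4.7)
The plan is to apply Campbell's formula (Lemma~\ref{lemma:general_Campbell_formulas}) to the worm point process, giving
\[
\E\big[ \check{\totalsize}_{A,\{x\}}^R \big] \;=\; \lambda \sum_{y \in V(\graph)} \sum_{\ell \geq 1} \eta(\ell)\, E_y\!\left[ |X[0,\ell)|\,\ind[E_\ell]\,\ind[|X[0,\ell)|\le R]\right],
\]
where $E_\ell := \{\,x\in X[0,\ell),\; X[0,\ell)\cap(\overline{A}\setminus\{x\}) = \emptyset\,\}$. Since $|X[0,\ell)|\le\ell$, the cutoff $|X[0,\ell)|\le R$ is implied by $\ell\le R$; replacing the former by the latter gives a valid lower bound that already isolates the target $\E[\mathcal{L}^2\ind[\mathcal{L}\le R]]$ factor.

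For fixed $\ell\le R$, decompose $E_\ell$ by the first hitting time $T_x = k\in\{0,\dots,\ell-1\}$. Because $x\in\overline{A}$, on the event $\{T_x=k\}\cap E_\ell$ the pre-hit piece must satisfy $X[0,k)\cap\overline{A}=\emptyset$, while the post-hit piece $X[k,\ell)$ must avoid $\overline{A}\setminus\{x\}$. Applying the strong Markov property at $T_x$ together with $|X[0,\ell)|\ge|X[k,\ell)|$,
\[
\sum_y E_y\!\left[|X[0,\ell)|\,\ind[E_\ell,T_x=k]\right] \;\geq\; \Bigl(\sum_y P_y\bigl(X(k)=x,\,X[0,k)\cap\overline{A}=\emptyset\bigr)\Bigr)\cdot E_x\!\left[|X[0,\ell-k)|\,\ind[G_{\ell-k}]\right],
\]
with $G_m := \{X[1,m)\cap(\overline{A}\setminus\{x\})=\emptyset\}$. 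The pre-hit factor is evaluated by reversibility of SRW on the $\degree$-regular transitive graph: reading each length-$k$ path $y\to x$ backwards yields a path $x\to y$ whose positions at times $1,\dots,k$ avoid $\overline{A}$, so the sum equals $P_x(T_{\overline{A}}^+ > k) \geq P_x(T_{\overline{A}}^+ = +\infty)$.

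For the post-hit expectation, the trivial inclusion $\{T_{\overline{A}}^+=+\infty\}\subseteq G_m$ gives $P_x(G_m)\ge P_x(T_{\overline{A}}^+=+\infty)$; combining this with Corollary~\ref{coro:number_of_steps_and_size_of_trace_of_SRW_are_comparable_on_nonamenable_graphs} (which, by transitivity, supplies uniformly in the starting vertex some $c(\veps)>0$ with $P(|X[0,m)|<c(\veps)m)<\veps$) yields $E_x[|X[0,m)|\,\ind[G_m]]\ge c(\veps)\,m\,(P_x(T_{\overline{A}}^+=+\infty)-\veps)$. Summing over $k\in\{0,\dots,\ell-1\}$ produces $\sum_{k=0}^{\ell-1}(\ell-k)\asymp\ell^2$, and summing over $\ell\le R$ weighted by $\eta(\ell)$ completes the bound with constant $c(\veps)/2$.

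The main technical difficulty is recovering a full \emph{second} moment of $\mathcal{L}$: Campbell's formula only delivers the factor $|X[0,\ell)|$ naturally, so one power of $\ell$ must come from the linear-in-time lower bound on the SRW trace size on nonamenable graphs, and the other from summing over the location of $T_x$. It is precisely the first of these that explains the subtractive $\veps$-loss in the stated inequality; without the linear speed of SRW on a nonamenable graph, Campbell's formula alone would only reproduce an $\E[\mathcal{L}]$ factor.
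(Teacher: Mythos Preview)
Your proof is correct and follows essentially the same approach as the paper's: Campbell's formula, restriction to $\ell\le R$, decomposition by the first hitting time of $x$, reversibility, and the linear trace bound from Corollary~\ref{coro:number_of_steps_and_size_of_trace_of_SRW_are_comparable_on_nonamenable_graphs}. The only cosmetic difference is that you retain the \emph{post}-hit piece $|X[k,\ell)|$ for the size estimate and use reversibility on the pre-hit factor, whereas the paper retains the \emph{pre}-hit piece $|X[0,t)|$ and applies reversibility after factoring out the post-hit escape probability; by the time-symmetry of the walk these two routes are interchangeable and yield the same bound up to the absorbed constant.
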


\begin{proof}
	To prove the lemma, we will want to work with the trajectories of  the random walks instead of their traces only. The first step into this direction is the use of Lemma~\ref{lemma:general_Campbell_formulas} as
	\begin{align}
		\label{eq:expected_total_size_of_fat_worms_after_Campbell_formula}
		\E \big[  \check{ \totalsize }_{A, \{ x \}}^R \big] \overset{\eqref{eq:general_Campbell_expectation_formula}}{=}
		\lambda \cdot \sum_{z \in V(\graph)} \sum_{H \in \latticeanimals_z} |H| \cdot \ind \left[
		\parbox{6.8em}{
			\small{
			$ |H| \leq R,\  x \in H,\\ 
			H \cap \overline{A} = \{ x \}$}
		} \right] \cdot P_z \left( \trace (X[0, \mathcal{L})) = H \right),
	\end{align}
	where recall from Definition \ref{def:random_length_worms_as_PPP} that $\mathcal{L}$ is not only a random variable with law $\eta$, but it is independent of the random walk.
	
	Now, because of this independence and the fact that a random walk taking at most $R$ steps can only generate a set of volume at most $R$, the probability on the right hand side of \eqref{eq:expected_total_size_of_fat_worms_after_Campbell_formula} can be lower bounded by
	\begin{equation*}
		P_z \left( \trace (X[0, \mathcal{L}) ) = H \right) = 
		\sum_{\ell = 1}^{\infty} P_z \left( \trace(X[0, \ell)) = H \right) \cdot \eta(\ell) \geq
		\sum_{\ell = 1}^R P_z \left( \trace( X[0, \ell) ) = H  \right) \cdot \eta(\ell),
	\end{equation*}
	hence the expected total occupation measure can be further bounded from below by
	\begin{equation}
		\label{eq:expected_total_size_of_fat_worms_after_restricting_number_of_step}
		\E \big[ \check{ \totalsize }_{A, \{ x \}}^R \big] \overset{\eqref{eq:general_Campbell_expectation_formula}}{\geq}
		\lambda \cdot \sum_{\ell = 1}^R \, \eta(\ell) \sum_{z \in V(\graph)} \sum_{H \in \latticeanimals_z} |H| \cdot \ind \left[
		\parbox{5em}{
			\footnotesize{
			$x \in H,\\ 
			H \cap \overline{A} = \{ x \}$}
		} \right] \cdot P_z \left( \trace (X[0, \ell)) = H \right).
	\end{equation}
	We also used here that the terms are non-negative, hence the sums can be freely reordered.
	
	Since the innermost sum of \eqref{eq:expected_total_size_of_fat_worms_after_restricting_number_of_step} is just the expectation of the size of the trace of a random walk, under some extra constraints on its behaviour, the desired result will come once we can give a good enough lower bound on the sum	
	\begin{align}
		\label{eq:expectation_of_size_of_fat_worms_with_restrictions}
		& \sum_{z \in V(\graph)} E_z \left[
			\left| \trace (X[0, \ell)) \right| \cdot \ind \left[  
			\parbox{9.5em}{
				$x \in \trace( X[0,\ell) );\\ 
				\trace( X[0, \ell) ) \cap \overline{A} = \{ x \}$
			}
			\right]
		\right]  \\ & \quad \overset{(*)}{\geq}
		\label{eq:expectation_of_size_of_fat_worms_that_bounce_back}
		\sum_{z \in V(\graph)} \sum_{t = 0}^{\ell -1} E_z \left[
		\left| \trace (X[0, \ell)) \right| \cdot \ind \left[  
		\parbox{14em}{
			$T_x(X) = t; \, \trace( X[0,t) ) \cap \overline{A} = \emptyset; \\ 
			\trace( X(t, \ell) ) \cap \overline{A} = \emptyset$
		} \right]
		\right]
	\end{align}
	Here, the inequality $(*)$ holds since we further assumed that the walker is such that it hits $x$ at some step $t$ (before $\ell$), but then never visits the set $\overline{A}$ again, throwing away the trajectories that visit $x$ more than once. The expectation in \eqref{eq:expectation_of_size_of_fat_worms_that_bounce_back} can be bounded further from below:
	\begin{align}
		 & E_z \left[
		\left| \trace (X[0, \ell)) \right| \cdot \ind \left[  
		\parbox{14em}{
			$T_x(X) = t; \, \trace( X[0,t) ) \cap \overline{A} = \emptyset; \nonumber \\ 
			\trace( X(t, \ell) ) \cap \overline{A} = \emptyset$
		} \right]
		\right]  \\ & \quad \overset{(**)}{\geq}
		E_z \left[
		\left| \trace (X[0, t)) \right| \cdot \ind \left[  
		\parbox{14em}{
			$T_x(X) = t; \, \trace( X[0,t) ) \cap \overline{A} = \emptyset; \nonumber \\ 
			\trace( X(t, \ell) ) \cap \overline{A} = \emptyset$
		} \right]
		\right]  \\ & \quad \overset{(\bullet)}{\geq}
		\label{eq:expectaiton_of_size_of_fat_worm_halfs_that_avoid_closure_of_A}
		P_x \left( T_{\overline{A}}^+ = + \infty \right) \cdot 
		E_z \left[
		\left| \trace (X[0, t)) \right| \cdot \ind \left[  
		\parbox{14em}{
			$T_x(X) = t; \, \trace( X[0,t) ) \cap \overline{A} = \emptyset$
		} \right]
		\right],
	\end{align}
	using in $(**)$ that $t \leq \ell - 1$, and the Markov property with $\{ T_{\overline{A}}^+(X) \geq \ell - t \} \supseteq \{ T_{\overline{A}}^+(X) = + \infty \}$ in $(\bullet)$.
		Substituting this back into \eqref{eq:expectation_of_size_of_fat_worms_that_bounce_back} and using the reversibility of the simple random walk in the expectation of \eqref{eq:expectaiton_of_size_of_fat_worm_halfs_that_avoid_closure_of_A}, we arrive at the following lower bound on \eqref{eq:expectation_of_size_of_fat_worms_with_restrictions}:
	\begin{align}
		& \sum_{z \in V(\graph)} E_z \left[
		\left| \trace (X[0, \ell)) \right| \cdot \ind \left[  
		\parbox{9.4em}{
			$x \in \trace( X[0,\ell) );\\ 
			\trace( X[0, \ell) ) \cap \overline{A} = \{ x \}$
		}
		\right]
		\right] \nonumber \\ 
		& \quad \geq
		P_x\left( T_{\overline{A}}^+ = + \infty \right) \cdot \sum_{t = 0}^{\ell-1} \sum_{z \in V(\graph)} E_x \left[ 	\left| \trace (X[0, t)) \right| \cdot \ind \left[ 
		\parbox{9em}{
			$\trace( X[0, t)) \cap \overline{A} = \emptyset; \\ X(t) = z$}
		\right] \right] \nonumber \\ 
		& \quad \overset{(\bullet \bullet)}{=} 
		\label{eq:expected_size_of_half_fat_worm_conditioned_on_escaping_A}
		P_x\left( T_{\overline{A}}^+ = + \infty \right) \cdot \sum_{t = 0}^{\ell-1} E_x \left[ 	\left| \trace (X[0, t)) \right| \cdot \ind \left[ \trace( X[0, t) ) \cap \overline{A} = \emptyset \right] \right],
	\end{align}
	where $(\bullet \bullet)$ follows from the law of total probability. 
	The expectation on the right hand side can be further bounded from below using the constant $c(\veps)$ from Corollary \ref{coro:number_of_steps_and_size_of_trace_of_SRW_are_comparable_on_nonamenable_graphs} with any $\veps > 0$:
	\begin{align}
		& E_x \left[ \left| \trace (X[0, t)) \right| \cdot \ind \left[ \trace( X[0, t) ) \cap \overline{A} = \emptyset \right] \right] \nonumber \\ 
		& \quad \geq
		E_x \left[ 	\left| \trace (X[0, t)) \right| \cdot \ind[ \dist_{\graph}(x, X(t)) \geq c(\veps) \cdot t ] \cdot \ind \left[ \trace( X[0, t) ) \cap \overline{A} = \emptyset \right] \right] \nonumber \\ 
		& \quad \overset{(\diamond)}{\geq}
		c(\veps) \cdot t \cdot E_x \left[ \ind[ \dist_{\graph}(x, X(t)) \geq c(\veps) \cdot t ] \cdot \ind \left[ \trace( X[0, t) ) \cap \overline{A} = \emptyset \right] \right] \nonumber \\ 
		& \quad \geq 
		c(\veps) \cdot t \cdot P_x \left( \dist_{\graph}(x, X(t)) \geq c(\veps) \cdot t, \, T_{\overline{A}}^+(X) = + \infty \right) \nonumber \\ 
		& \quad \overset{\eqref{eq:number_of_steps_and_size_of_trace_of_SRW_are_comparable_on_nonamenable_graphs}}{\geq}
		\label{eq:expectation_of_size_of_half_worm_escaping_A_dealt_with_using_positive_speed}
		c(\veps) \cdot t \cdot \left( P_x \left( T_{\overline{A}}^+ = + \infty \right) - \veps \right).
	\end{align}
	Here $(\diamond)$ is due to the observation that the number of sites visited up to some step by a walk is always lower bounded by the distance between its starting point and its actual position.
	
	If we substitue the lower bound \eqref{eq:expectation_of_size_of_half_worm_escaping_A_dealt_with_using_positive_speed} back into \eqref{eq:expected_size_of_half_fat_worm_conditioned_on_escaping_A} and that back into \eqref{eq:expected_total_size_of_fat_worms_after_restricting_number_of_step}, we obtain
	\begin{align*}
		\E \big[ \check{ \totalsize }_{A, \{ x \}}^R \big] & \geq 
		c(\veps) \cdot \lambda \cdot P_x \left( T_{\overline{A}}^+ = + \infty \right) 
		\left( P_x \left( T_{\overline{A}}^+ = + \infty \right) - \veps \right)
		\cdot \sum_{\ell = 1}^{R} \, \mu(\ell) \cdot \sum_{t = 0}^{\ell - 1} \, t \\ & \geq 
		c(\veps) \cdot \lambda \cdot P_x \left( T_{\overline{A}}^+ = + \infty \right) 
		\cdot \left( P_x \left( T_{\overline{A}}^+ = +\infty \right) - \veps \right)
		\cdot \E \left[ \mathcal{L}^2 \ind \left[ \mathcal{L} \leq R \right] \right],
	\end{align*}
	which is exactly what we wanted to prove.
\end{proof}

We will now sum \eqref{eq:lower_bound_on_the_expected_total_size_of_worms_hitting_one_vertex} over the vertices $x$ in some $B \subseteq \extboundary A$ using Corollary~\ref{coro:lower_bound_on_partial_square_term_capacity}, then, assuming unimodularity, will get rid of the multiplicities using Corollary~\ref{coro:expected_size_without_and_with_multiplicity}.

\begin{corollary}
	\label{coro:lower_bound_on_expected_size_of_fat_worms_on_nonamenable_graphs}	
	Consider a nonamenable graph $\graph$ with a subgroup of its automorphisms $\grp \subgrp \Aut(\graph)$ that acts transitively and unimodularly on $\graph$. Let $\eta \, : \, \N_{>0} \rightarrow [0,1]$ be a probability mass function and $\animals = \sum_{i \in I} \delta_{(x,H)_i} \sim \mathcal{P}_{\eta}^\lambda$.
	Assume also that we are given some $R \in \N_{>0}$, a finite connected set $A \fsubset V(\graph)$ and a subset of its exterior boundary $B \subseteq \extboundary A$.
	There exists a constant $c(\graph) > 0$ depending only on the graph such that, for the restricted process $\check{ \animals }_{A,B}^R$ defined in Definition \ref{def:point_measure_restricted_to_hit_B_but_not_the_closure_of_A}, we have
	\begin{equation}
		\label{eq:lower_bound_on_expected_size_of_fat_worms_on_nonamenable_graphs}
		\E \left[ \left| \trace( \check{ \animals }_{A,B}^R ) \right| \right] \geq
		c(\graph) \cdot \frac{ \lambda \cdot \E \left[ \mathcal{L}^2 \ind \left[ \mathcal{L} \leq R \right] \right] }{ \lambda \cdot m_1 + 1 } \cdot 
		\left[  
		\tfrac{ (1-\spectral(\graph))^2 \cdot (1 + \Cheeger (\graph))}{2} \cdot |A| - \left( | \extboundary A | - |B| \right)
		\right],
	\end{equation}
	where $\mathcal{L}$ denotes a random variable that has law $\eta$.
\end{corollary}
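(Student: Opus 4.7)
My plan is to reduce the corollary to the single-vertex bound in Lemma~\ref{lemma:lower_bound_on_the_expected_total_size_of_worms_hitting_one_vertex} by summing over $x \in B$, then to apply Corollary~\ref{coro:lower_bound_on_partial_square_term_capacity} to turn the sum of hitting probabilities into a linear-in-$|A|$ expression, and finally to convert from total occupation to size of trace using Corollary~\ref{coro:expected_size_without_and_with_multiplicity}.

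First I would observe that for distinct $x,y\in B$ the sub-processes $\check{\animals}_{A,\{x\}}^R$ and $\check{\animals}_{A,\{y\}}^R$ consist of disjoint collections of animals (an animal in the former touches $\overline{A}$ only at $x$, hence cannot belong to the latter), and every such animal also belongs to $\check{\animals}_{A,B}^R$. Writing $P_x := P_x(T_{\overline{A}}^+=+\infty)$ and letting $\veps>0$ be at our disposal, this gives
\begin{equation*}
\E\!\left[\check{\totalsize}_{A,B}^R\right] \;\geq\; \sum_{x\in B}\E\!\left[\check{\totalsize}_{A,\{x\}}^R\right] \;\geq\; c(\veps)\,\lambda\,\E\!\left[\mathcal{L}^2\ind[\mathcal{L}\leq R]\right]\sum_{x\in B} P_x\,(P_x-\veps),
\end{equation*}
where the second inequality uses Lemma~\ref{lemma:lower_bound_on_the_expected_total_size_of_worms_hitting_one_vertex} at each $x$.

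Next I would use the elementary inequality $P_x(P_x-\veps)\geq P_x^2-\veps$ (valid since $0\leq P_x\leq 1$) to obtain $\sum_{x\in B}P_x(P_x-\veps)\geq \sum_{x\in B}P_x^2 - \veps\,|B|$. By Corollary~\ref{coro:lower_bound_on_partial_square_term_capacity} together with the trivial bound $|\overline{A}|=|A|+|\extboundary A|\geq (1+\Cheeger(\graph))\,|A|$ from the definition of the Cheeger constant, I get
\begin{equation*}
\sum_{x\in B}P_x^2 \;\geq\; (1-\spectral(\graph))^2\,(1+\Cheeger(\graph))\,|A| \;-\; \left(|\extboundary A|-|B|\right).
\end{equation*}
Since $|B|\leq |\extboundary A|\leq \degree\,|A|$, I will fix $\veps := \tfrac{(1-\spectral(\graph))^2(1+\Cheeger(\graph))}{2\degree}$, a graph-dependent constant; then $\veps\,|B|\leq \tfrac{1}{2}(1-\spectral(\graph))^2(1+\Cheeger(\graph))\,|A|$, which is absorbed into half of the first term. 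With this choice both $\veps$ and the corresponding $c(\veps)$ from Corollary~\ref{coro:number_of_steps_and_size_of_trace_of_SRW_are_comparable_on_nonamenable_graphs} depend only on $\graph$.

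To conclude, I would apply Corollary~\ref{coro:expected_size_without_and_with_multiplicity} with $\bar{\animals}=\check{\animals}_{A,B}^R$, gaining the factor $1/(\lambda m_1+1)$ when passing from $\check{\totalsize}_{A,B}^R$ to $|\trace(\check{\animals}_{A,B}^R)|$. Combining all the pieces yields exactly~\eqref{eq:lower_bound_on_expected_size_of_fat_worms_on_nonamenable_graphs} with $c(\graph):=c(\veps)$. I do not anticipate any real obstacle: all ingredients are in hand, and the only subtle point is calibrating $\veps$ using the uniform bound $|B|\leq \degree\,|A|$ so that the linear correction $\veps\,|B|$ is swallowed by the main $|A|$-term without damaging the final shape of the bound.
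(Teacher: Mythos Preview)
Your proposal is correct and follows essentially the same route as the paper: decompose $\check{\totalsize}_{A,B}^R$ into single-vertex contributions, apply Lemma~\ref{lemma:lower_bound_on_the_expected_total_size_of_worms_hitting_one_vertex} at each $x\in B$, use Corollary~\ref{coro:lower_bound_on_partial_square_term_capacity} together with $|\overline{A}|\geq(1+\Cheeger(\graph))|A|$ to control $\sum_{x\in B}P_x^2$, absorb the $\veps$-correction by a graph-dependent choice of $\veps$, and finish with Corollary~\ref{coro:expected_size_without_and_with_multiplicity}. The only cosmetic difference is in how the $\veps$-term is swallowed: the paper bounds $\sum_{x\in B}P_x\leq|\overline{A}|$ and takes $\veps=(1-\spectral(\graph))^2/2$, whereas you bound $\sum_{x\in B}P_x\leq|B|\leq \degree\,|A|$ and take $\veps=(1-\spectral(\graph))^2(1+\Cheeger(\graph))/(2\degree)$; both lead to the same final inequality.
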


\begin{proof}
	By Corollary \ref{coro:expected_size_without_and_with_multiplicity}, it is enough to prove an appropriate lower bound on the total restricted occupation measure $\check{ \totalsize }_{A,B}^R$ introduced in~\eqref{def:point_measure_restricted_to_hit_B_but_not_the_closure_of_A_total_local_size}. This can be further bounded by the decomposition
	\begin{equation}
		\label{eq:lower_bound_on_totalsize_using_decomposition}
		\check{ \totalsize }_{A,B}^R \geq 
		\sum_{x \in B} \check{ \totalsize }_{A, \{ x \}}^R.
	\end{equation}
	Indeed, on the right hand side we divided the worms of the restricted zoo into separate (and independent) bundles hitting only one of the vertices of $B$, but not intersecting $\overline{A}$ otherwise. 	
	Evidently, this is just a lower bound since all the worms that hit $B$ at multiple vertices are thrown away. So, Lemma~\ref{lemma:lower_bound_on_the_expected_total_size_of_worms_hitting_one_vertex} yields that for every $\veps > 0$ there exists some $c(\veps) > 0$ such that
	\begin{equation}\label{eq:lower_bound_on_expected_size_of_fat_worms_on_nonamenable_graphs_with_epsilon}
	\begin{aligned}
				&
		\E \left[ \left| \trace \left( \check{ \animals }_{A,B}^R \right) \right| \right] \overset{\eqref{eq:expected_size_without_and_with_multiplicity}}{\geq}
		\frac{1}{\lambda \cdot m_1 + 1} \cdot \E \left[ \check{ \totalsize }_{A,B}^R \right] \overset{\eqref{eq:lower_bound_on_totalsize_using_decomposition}}{\geq}
		\frac{1}{\lambda \cdot m_1 + 1} \cdot \sum_{x \in B} \E \left[ \check{ \totalsize }_{A,\{x\}}^R \right] \\ 
		& \qquad \overset{\eqref{eq:lower_bound_on_the_expected_total_size_of_worms_hitting_one_vertex}}{\geq}
		c(\veps) \cdot \frac{ \lambda \cdot \E \left[ \mathcal{L}^2 \ind \left[ \mathcal{L} \leq R \right] \right] }{ \lambda \cdot m_1 + 1 } \cdot \sum_{x \in B} P_x \left( T_{\overline{A}}^+ = + \infty \right) \cdot \left( P_x \left( T_{\overline{A}}^+ = + \infty \right) - \veps \right).	
	\end{aligned}
	\end{equation}
	However, since 
	\begin{equation*}
		\sum_{x \in B} P_x \left( T_{\overline{A}}^+ = + \infty \right) \leq \capacity \left( \overline{A} \right) \overset{\eqref{eq:capacity_bound}}{\leq}
		\left| \overline{A} \right|
	\end{equation*}
	holds, together with Corollary \ref{coro:lower_bound_on_partial_square_term_capacity} we have
		\begin{equation} \label{eq:lower_bound_on_partial_square_term_capacity_with_epsilon}
	\begin{aligned}
				& \sum_{x \in B} P_x \left( T_{\overline{A}}^+ = +\infty \right) \cdot \left( P_x \left( T_{\overline{A}}^+ = +\infty \right) - \veps \right) \\ 
				& \qquad  \overset{\eqref{eq:lower_bound_on_partial_square_term_capacity}}{\geq}
		(1 - \spectral(\graph))^2 \cdot \left| \overline{A} \right| - ( | \extboundary A| - |B|) - \veps \left| \overline{A} \right| \\ 
		& \qquad \hskip 0.23 cm \geq
		\left( (1 - \spectral(\graph))^2 - \veps \right) \cdot (1 + \Cheeger(\graph))\cdot \left| A \right| - ( | \extboundary A| - |B|).
	\end{aligned}
	\end{equation}
	Substituting  \eqref{eq:lower_bound_on_partial_square_term_capacity_with_epsilon} back into \eqref{eq:lower_bound_on_expected_size_of_fat_worms_on_nonamenable_graphs_with_epsilon} and choosing $\veps = (1 - \spectral(\graph))^2 \slash 2$ we arrive to the desired lower bound.
\end{proof}

As it is evident from the proof, the appearance of the strange expression on the right-hand side of \eqref{eq:lower_bound_on_expected_size_of_fat_worms_on_nonamenable_graphs} is due to the fact that a vertex $x \in B \subseteq \extboundary A$ is weighted by the corresponding escape probability, which depends on the shape of $A$.
Fortunately, in the exploration we will achieve that the fattening set is larger than a big multiplier of the already explored set, so that the relevant subset of the exterior boundary will always be so large that the exact shape will not matter.

Finally, here is our upper bound on the second moment of the size of the trace. It does not use unimodularity and works for any animal measure, not just worms.

\begin{lemma}[Second moment upper bound for general zoo]
	\label{lemma:upper_bound_on_the_second_moment_of_the_occupied_sets_size_for_worms}
	Let us consider a graph $\graph$ with a transitive  subgroup of its automorphisms $\grp \subgrp \Aut(\graph)$.
	Let $\nu_{\origin}$ be a $\grp$-rotation invariant measure on $\latticeanimals_{\origin}$, then $\nu$ the measure on $\zoospace$ induced from $\nu_{\origin}$ via \eqref{def:induced_measure_on_zoospace}, and $\animals = \sum_{i \in I} \delta_{(x,H)_i} \sim \mathcal{Q}_{\nu}^\lambda$.
	Given some $R \in \N$ and an event $A^R \in \zooalgebra$ for which $A^R \subseteq \zoospace^R(V(\graph))$ holds, let consider the restricted zoo $\bar{\animals} := \animals \ind \left[ A^R \right]$ introduced in~\eqref{eq:set_hitting_animals_of_volume_at_most_R}. Then we have
	\begin{equation}
		\label{eq:upper_bound_on_the_second_moment_of_the_occupied_sets_size_for_worms}
		\E \left[ \left| \trace \left( \bar{ \animals } \right) \right|^2 \right] \leq 
		\left( \left| \ball(\origin, R) \right| + 1 \right) \cdot \E\left[ \left| \trace \left( \bar{ \animals } \right) \right| \right] + 
		\E^2 \left[ \left| \trace \left( \bar{ \animals } \right) \right| \right],
	\end{equation}
	from which it obviously follows that 
	\begin{equation}
		\label{eq:upper_bound_on_the_variance_of_the_occupied_sets_size_for_worms}
		\Var \left( \left| \trace \left( \bar{\animals} \right) \right| \right) \leq 
		\left( \left| \ball(\origin, R) \right| + 1 \right) \cdot \E\left[ \left| \trace \left( \bar{ \animals } \right) \right| \right].
	\end{equation}
\end{lemma}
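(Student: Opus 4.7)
The plan is to bound $\E\big[|\trace(\bar{\animals})|^2\big]$ by writing it as $\sum_{x \in V(\graph)} \E\big[\ind[x \in \trace(\bar{\animals})] \cdot |\trace(\bar{\animals})|\big]$ and controlling each summand via a splitting of the underlying Poisson point process. For a fixed vertex $x$, I will decompose $\bar{\animals}$ into two independent pieces using Theorem~\ref{thm:basic_properties_of_PPPs}(ii): $\bar{\animals}^{\text{on}} := \bar{\animals}\,\ind[\zoospace(\{x\})]$, the animals that pass through $x$, and $\bar{\animals}^{\text{off}} := \bar{\animals}\,\ind[A^R \setminus \zoospace(\{x\})]$, those that avoid $x$. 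Setting $T_x := \trace(\bar{\animals}^{\text{on}})$ and $T_x^c := \trace(\bar{\animals}^{\text{off}})$, the union $\trace(\bar{\animals}) = T_x \cup T_x^c$ gives the pointwise bound $|\trace(\bar{\animals})| \leq |T_x| + |T_x^c|$.

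Two easy observations then take over. The \emph{deterministic} bound is that any animal of volume at most $R$ containing $x$ is connected, hence fits inside $\ball(x, R-1)$; therefore $T_x \subseteq \ball(x, R-1)$ and, by transitivity, $|T_x| \leq |\ball(\origin, R)|$ almost surely. The \emph{probabilistic} bound is that $\ind[x \in \trace(\bar{\animals})]$ is measurable with respect to $\bar{\animals}^{\text{on}}$ alone (it just checks whether $\bar{\animals}^{\text{on}}$ is empty), while $|T_x^c|$ is measurable with respect to $\bar{\animals}^{\text{off}}$, so by the independence above they are independent; together with $|T_x^c| \leq |\trace(\bar{\animals})|$ this yields
\begin{equation*}
\E\big[\ind[x \in \trace(\bar{\animals})] \cdot |\trace(\bar{\animals})|\big] \leq |\ball(\origin, R)| \cdot \P(x \in \trace(\bar{\animals})) + \P(x \in \trace(\bar{\animals})) \cdot \E\big[|\trace(\bar{\animals})|\big].
\end{equation*}
Summing this over $x \in V(\graph)$ and using $\sum_x \P(x \in \trace(\bar{\animals})) = \E\big[|\trace(\bar{\animals})|\big]$ will give
\begin{equation*}
\E\big[|\trace(\bar{\animals})|^2\big] \leq |\ball(\origin, R)| \cdot \E\big[|\trace(\bar{\animals})|\big] + \E\big[|\trace(\bar{\animals})|\big]^2,
\end{equation*}
which is slightly stronger than~\eqref{eq:upper_bound_on_the_second_moment_of_the_occupied_sets_size_for_worms}; the variance estimate~\eqref{eq:upper_bound_on_the_variance_of_the_occupied_sets_size_for_worms} is then immediate by subtracting $\E\big[|\trace(\bar{\animals})|\big]^2$.

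There is really no obstacle here: the whole proof rests on the PPP-splitting trick, which plays a double role of (a) isolating an ``on-$x$'' piece on which the deterministic ceiling $|T_x| \leq |\ball(\origin, R)|$ applies (thanks to connectedness and $|H| \leq R$), and (b) decoupling the event $\{x \in \trace\}$ from the ``far'' piece $T_x^c$, so that the latter contributes only the mean-field product $\P(x \in \trace) \cdot \E[|\trace|]$. Notably, the argument uses neither unimodularity nor any structure beyond transitivity, consistent with the lemma's hypotheses.
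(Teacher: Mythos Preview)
Your proof is correct and follows essentially the same strategy as the paper's: both split $\bar{\animals}$ into the independent pieces $\bar{\animals}\,\ind[\zoospace(\{x\})]$ and $\bar{\animals}\,\ind[A^R\setminus\zoospace(\{x\})]$, use the deterministic containment of the ``on-$x$'' trace in a ball of radius $R$, and use PPP independence to factor the ``off-$x$'' contribution. Your organization via $\E[|\trace|^2]=\sum_x \E[\ind[x\in\trace]\cdot|\trace|]$ is slightly cleaner than the paper's double sum $\sum_{x,y}\P(x,y\in\trace)$ and indeed shaves off the $+1$, but the underlying mechanism is identical.
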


\begin{proof}
	To prove~\eqref{eq:upper_bound_on_the_second_moment_of_the_occupied_sets_size_for_worms}, recall the notation $\bar{\LTmeasure}(\cdot)$ and $\bar{\totalsize}$ from~\eqref{def:point_measure_restricted_to_hit_B_but_not_the_closure_of_A_total_local_size} corresponding to our present $\bar{ \animals }$, and note that 	
	\begin{equation}
		\left| \trace \left( \bar{\animals} \right) \right| = \sum_{x \in V(\graph)} \ind \left[ \bar{ \LTmeasure }(x) > 0 \right],
	\end{equation}
	and hence
	\begin{equation*}
		\left| \trace \left( \bar{\animals} \right) \right|^2 = 
		\sum_{x \in V(\graph)} \ind \left[ \bar{ \LTmeasure }(x) > 0 \right] + 
		\sum_{x \in V(\graph)} \sum_{y \in V(\graph) \setminus \{ x \}} \ind \left[ \bar{ \LTmeasure }(x) > 0, \, \bar{ \LTmeasure }(y) > 0 \right].
	\end{equation*}
	As a consequence, we obtain
	\begin{equation}
		\label{eq:upper_bound_on_second_moment}
		\E \left[ \left| \trace \left( \bar{ \animals } \right) \right|^2 \right] \leq
		\E \left[ \left| \trace \left( \bar{ \animals } \right) \right| \right] +
		\sum_{x \in V(\graph)} \sum_{y \in V(\graph)} \prob \left( \bar{ \LTmeasure }(x) > 0, \, \bar{ \LTmeasure }(y) > 0 \right),
	\end{equation}
	so, to prove the result, we will now need to concentrate on the second term.
	
	Let us introduce some more notation.
	Recalling \eqref{eq:set_hitting_animals} and \eqref{def:restricted_zoo}, for any given vertex $x \in V(\graph)$ let us consider the following decomposition of $\bar{\animals}$:
	\begin{align}
		\bar{ \animals } = 
		\bar{ \animals }^x + \bar{ \animals }^{ \neg x } :=
		\bar{ \animals } \ind \left[ \zoospace( \{ x \} ) \right] +
		\bar{ \animals } \ind \left[ \zoospace \setminus \zoospace( \{ x \} ) \right],
	\end{align}
	and the corresponding decomposition of the occupation measure 	
	\begin{equation}
		\bar{ \LTmeasure } = \bar{ \LTmeasure }^{x} + \bar{ \LTmeasure }^{ \neg x }.
	\end{equation}
	In words, we separate the animals depending on if they are hitting the vertex $x$ or not.
	We know from $(ii)$ of Theorem \ref{thm:basic_properties_of_PPPs} that these terms are independent.
	
	Now, for a given $x \in V(\graph)$, we can further decompose the inner summation in~\eqref{eq:upper_bound_on_second_moment} as
	\begin{align}
		& \sum_{y \in V(\graph)} \prob \left( \bar{ \LTmeasure }(x) > 0, \, \bar{ \LTmeasure }(y) > 0 \right) =
		\sum_{y \in V(\graph)} \prob \left( \bar{ \LTmeasure }^{x} (x) > 0, \, \bar{ \LTmeasure }^{x}(y) + \bar{ \LTmeasure }^{ \neg x }(y) > 0 \right) \nonumber \\ 
		& \qquad \overset{(*)}{\leq}
		\sum_{y \in V(\graph)} \prob \left( \bar{ \LTmeasure }^{x} (x) > 0, \, \bar{ \LTmeasure }^{ x }(y) > 0 \right) +
		\sum_{y \in V(\graph)} \prob \left( \bar{ \LTmeasure }^{x} (x) > 0, \, \bar{ \LTmeasure }^{ \neg x }(y) > 0 \right) \nonumber \\ 
		& \qquad \overset{(**)}{=}
		\sum_{y \in V(\graph)} \prob \left( \bar{ \LTmeasure }^{x} (x) > 0, \, \bar{ \LTmeasure }^{ x }(y) > 0 \right) +
		\prob \left( \bar{ \LTmeasure }^{x} (x) > 0 \right) \cdot \sum_{y \in V(\graph)} \prob \left( \bar{ \LTmeasure }^{ \neg x }(y) > 0 \right) \nonumber \\ 
		& \qquad \leq
		\label{eq:upper_bound_on_simultaneous_hitting_by_the_whole_set}
		\sum_{y \in V(\graph)} \prob \left( \bar{ \LTmeasure }^{x} (x) > 0, \, \bar{ \LTmeasure }^{ x }(y) > 0 \right) +
		\prob \left( \bar{ \LTmeasure } (x) > 0 \right) \cdot \sum_{y \in V(\graph)} \prob \left( \bar{ \LTmeasure } (y) > 0 \right).
	\end{align}
	Here, $(*)$ follows from the union bound, meanwhile $(**)$ is due to $(ii)$ of Theorem \ref{thm:basic_properties_of_PPPs} (as the two occupation measures are restricted to disjoint sets).
	
	The second term on the right hand side of  \eqref{eq:upper_bound_on_simultaneous_hitting_by_the_whole_set}, after summing over $x\in V(\graph)$, is just $\E^2 \left[ \left| \trace \left( \bar{ \animals } \right) \right| \right]$, which is the last term in \eqref{eq:upper_bound_on_the_second_moment_of_the_occupied_sets_size_for_worms}. The first term in \eqref{eq:upper_bound_on_simultaneous_hitting_by_the_whole_set} can be rewritten using the law of total probability as follows:
	\begin{align}
		\label{eq:upper_bound_on_simultaneous_hitting_restricted_to_x}
		\sum_{y \in V(\graph)} \prob \left( \bar{ \LTmeasure }^{x} (x) > 0, \, \bar{ \LTmeasure }^{ x }(y) > 0 \right) &=
		\prob \left( \bar{ \LTmeasure }^{x} (x) > 0 \right) \cdot
		\sum_{y \in V(\graph)} \prob \left. \left( \bar{ \LTmeasure }^{ x }(y) > 0 \, \right| \, \bar{ \LTmeasure }^{x} (x) > 0 \right),
	\end{align}
and
	\begin{align}
		\label{eq:silly_upper_bound_on_sum_of_conditional_hitting}
		\sum_{y \in V} \prob \left. \left( \bar{ \mu }^x(y) > 0 \, \right| \, \bar{ \mu }^x(x) > 0 \right) \overset{(*)}{=}
		\sum_{y \in \ball(x, R)} \prob \left. \left( \bar{ \mu }^x(y) > 0 \, \right| \, \bar{ \mu }^x(x) > 0 \right) \overset{(**)}{\leq}
		\left| \ball(\origin, R) \right|.
	\end{align}
	In $(*)$ we used that since the restriction is given by $A^R \subseteq \zoospace^R(V(\graph))$ and hitting $x$, only finitely many terms can be positive and these non-zero terms must be in a ball of radius $R$ around $x$, and have $(**)$ since the graph is transitive.

Now, putting \eqref{eq:silly_upper_bound_on_sum_of_conditional_hitting} and \eqref{eq:upper_bound_on_simultaneous_hitting_restricted_to_x} back into \eqref{eq:upper_bound_on_simultaneous_hitting_by_the_whole_set}, then summing over $x \in V(\graph)$ as in \eqref{eq:upper_bound_on_second_moment}, we obtain the desired upper bound~\eqref{eq:upper_bound_on_the_second_moment_of_the_occupied_sets_size_for_worms}.
\end{proof}

\section{The exploration processes}
\label{section:exploration_processes}

In this section, building on the results of Section~\ref{section:bounds_on_fat}, we will use two different exploration processes to prove Theorems~\ref{t.free} and~\ref{t.worms} (more precisely, Theorem~\ref{thm:random_length_worms_model_on_nonamenable_graphs}). The argument for worms on any nonamenable unimodular transitive graph will be more general than the branching process argument for free products, and, in principle, it could also be used for that purpose. However, handling the parity issue (Figure~\ref{f.free2}) within that more general exploration process would have made the argument much harder to follow, hence we have decided to keep both approaches, instead of trying to unify the proofs. 

\subsection{A supercritical branching process on free products}\label{ss.explofree}

In this subsection, we will work under the circumstances of Theorem~\ref{t.free}: we are given a nonamenable free product $\graph = \graph_1 \star \graph_2$, a transitive and unimodular subgroup $\grp \subgrp \Aut(\graph)$, a $\grp$-rotation invariant measure $\nu_{\origin}$ on $\latticeanimals_{\origin}$, the measure $\nu$ on $\zoospace$ induced from $\nu_{\origin}$ via \eqref{def:induced_measure_on_zoospace}, and the corresponding Poisson point process $\animals = \sum_{i \in I} \delta_{(x,H)_i} \sim \mathcal{Q}_{\nu}^\lambda$, with some fixed intensity $\lambda > 0$. 

Recall also the proof strategy described in Subsection~\ref{subsection:poisson_zoo_on_free_products}: using the fact that every vertex is a cutpoint of the graph, Claim~\ref{claim:properties_of_free_product_of_graphs}, we are going to explore a subset of the cluster of a fixed vertex in the form of a supercritical branching process.

\subsubsection{Some preparation}

Before we can describe the exploration branching process, we need to introduce some more notation.
They all serve the purpose of dealing with parity issue of Figure~\ref{f.free2}.

Looking back at Corollary \ref{coro:lower_bound_on_expected_size_of_fat_of_animals_through_one_vertex_on_free_product}, recall that for a fixed $\nu$ and $R$ it can happen that there is only one factor of the free product in whose direction it is useful to continue the exploration. However, we are not necessarily provided with this direction from a vertex on the exterior boundary of an arbitrary connected set. In such a bad case, we should take one extra step further in the bad factor to get to the good one. 
This means a mapping from the vertices of the exterior boundary of some set:

\begin{definition}
	\label{def:mapping_into_the_good_direction}
	Let $A \fsubset V(\graph)$ connected, $x \in \extboundary A$ and $y \in \mathcal{N}_1(x)$ and $z \in \mathcal{N}_2(x)$ fixed in an arbitrary way. Then
	\begin{equation}
		\label{eq:mapping_into_the_good_direction}
		\Psi_1(x,A) := 
		\begin{cases}
			x, & \text{if} \quad \mathcal{N}_1(x) \cap \overline{A} = \emptyset, \\
			z, & \text{otherwise}, 
		\end{cases}
		\quad \text{and} \quad
		\Psi_2(x,A) := 
		\begin{cases}
			x, & \text{if} \quad \mathcal{N}_2(x) \cap \overline{A} = \emptyset, \\
			y, & \text{otherwise}.
		\end{cases}
	\end{equation}
	These can be also extended to any subset $B \subseteq \extboundary A$ of the exterior boundary as 
	\begin{equation}
		\label{eq:mapping_into_the_good_direction_for_sets}
		\Psi_1(B, A) := \big\{ \Psi_1(x,A) : x \in B \big\}
		\quad \text{and} \quad
		\Psi_2(B, A) := \big\{ \Psi_2(x,A) : x \in B \big\}.
	\end{equation}
\end{definition}

\noindent
It is important to note that although in the second cases of \eqref{eq:mapping_into_the_good_direction} the vertex $\Psi_i(x,A)$ is chosen arbitrarily, by the structure of the free product we always have $|\Psi_i(B,A)| = |B|$.

Later in the explicit description of the exploration, we would also like to get back to the vertex $x \in \extboundary A$ from $\Psi_i(x,A)$. This inverse will be denoted by $\Psi_i^{-1}(\cdot,A)$.

The statement of Corollary \ref{coro:lower_bound_on_expected_size_of_fat_of_animals_through_one_vertex_on_free_product} does not say anything about the status of the vertex whose neighbourhood was considered, which would leave holes in the explored set.
So, to arrive at a connected set after using the corollary, we need to have some animal visiting that vertex.
A weak but still satisfactory way to achieve this is to consider only animals that are rooted here:

\begin{definition}
	\label{def:gluing_of_fat}
	Let $A \fsubset V(\graph)$ connected, $x \in \extboundary A$, and $\animals = \sum_{i \in I} \delta_{(x,H)_i}$ a point measure. Then, for $i = 1,2$,
	\begin{equation}
		\label{eq:gluing_of_fat}
		\Phi_i(x, A, \animals) := 
		\begin{cases}
			\Psi_i(x,A), & \text{if} \quad \forall \, y \in \{ x, \Psi_i(x,A) \}, \, \exists \, H \in \latticeanimals_y: \, \animals((y,H)) > 0; \\
			\emptyset, & \text{otherwise}.
		\end{cases}
	\end{equation}
	Again, this can be extended to any $B \subseteq \extboundary A$ as
		\begin{equation}
		\label{eq:gluing_of_fat_for_sets}
		\Phi_i(B,A,\animals) := \big\{ \Phi_i(x,A,\animals) : {x \in B} \big\}.
	\end{equation}
\end{definition}

Obviously, the previous notion is not that useful when the point process $\animals$ is exactly the same as the one we will use for the fattening, since that would introduce some extra nontrivial dependencies.
However, using the colouring property from Theorem \ref{thm:basic_properties_of_PPPs} we can use independent processes for the different purposes. For $\animals = \sum_{i \in I} \delta_{(x,H)_i}$, take i.i.d.~random variables $\{ \xi_i \}_{i \in I}$ with $\xi_i \sim \Bernoulli(1/2)$. Then, by Theorem \ref{thm:basic_properties_of_PPPs}~$(iii)$, in the decomposition
\begin{equation}
	\label{def:sprinkling_and_branching_parts_of_animals}
	\animals= {}^s \animals + {}^b \animals \quad\text{with}\quad
	{}^s \animals := \sum_{i \in I} \xi_i \delta_{(x,H)_i} , \quad 
	 {}^b \animals := \sum_{i \in I} (1 - \xi_i) \delta_{(x,H)_i},
\end{equation}
we have that ${}^s \animals$ and ${}^b \animals$ are independent, with law $\mathcal{Q}_{\nu}^{\lambda/2}$.
Let us note here that the notation comes from the fact that ${}^b \animals$ will be used in the exploration (and hence the branching), meanwhile ${}^s \animals$ has the role of filling the holes left by the exploration (i.e., sprinkling).

\subsubsection{The exploration and the branching}

We now have everything to describe the exploration and prove that it will survive forever with positive probability. As we would like to use branching process domination for this, we will use the appropriate terminology throughout.

Recall from Corollary \ref{coro:lower_bound_on_expected_size_of_fat_of_animals_through_one_vertex_on_free_product} that for given $\nu$ and $R$ there is always a factor of the free product $\graph$ in whose direction the expected size of the set of all animals approaching any vertex can be lower bounded by a multiple of the truncated second moment $m_2^R$. Although we will take $R$ large enough only later, we can assume without loss of generality that for that $R$ this good direction will be given by $\graph_1$.

As the initial ancestor, we have the origin $\origin$. In the first step, we explore the random set
\begin{equation}
	\label{def:first_step_of_exploration_branching}
	E_1 := A_1 \cup B_1 \cup \big\{ \Psi_1^{-1}(x,A_1) : x \in B_1 \big\},
\end{equation}
where the subsets are defined as
\begin{equation}
	\label{def:first_step_of_exploration_branching_A1_B1}
	A_1 := \trace \left( {}^b \animals \ind \left[ \zoospace^R ( \{ \origin \} ) \right] \right)
	\quad \text{and} \quad
	B_1 := \Phi_1 \left( \extboundary A_1, A_1, {}^s \animals  \right).
\end{equation}
That is, $A_1$ is the set covered by all the animals of volume at most $R$ of ${}^b \animals$ that ever visit $\origin$, $B_1$ is a collection of vertices outside of $A_1$ in at most distance $2$ from $A_1$ that have all $\graph_1$ edges going further away and have at least one animal growing from all the vertices on the path going to them from the set.
Obviously, adding the set of $\Psi_1^{-1}(x,A_1)$'s for all $x \in B_1$ is needed to make the explored set connected.

As we reason below, the vertices of $B_1$ can be considered as the children of the origin.
Thus, it is worth examining its expected size, controlled by the truncated second moment.

\begin{claim}
	\label{claim:lower_bound_on_reproduction_rate_branching_process_first_generation}
	Using the notation introduced above, we have
	\begin{equation}
		\label{eq:lower_bound_on_reproduction_rate_branching_process_first_generation}
		\E \left| B_1 \right| \geq
		\frac{ \left(1-e^{-\lambda/2}\right)^2 \cdot \Cheeger(\graph) \cdot \lambda}{ \lambda \cdot m_1^R + 2 } \cdot m_2^R.
	\end{equation}
\end{claim}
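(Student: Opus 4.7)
The plan is to factor the bound on $\E|B_1|$ into two essentially independent pieces: an $m_2^R$-sized exterior boundary $\extboundary A_1$ generated by the ``branching'' copy ${}^b\animals$, and a constant per-vertex success probability supplied by the independent ``sprinkling'' copy ${}^s\animals$, which produces the animals at both endpoints required for $\Phi_1$ to be nonempty.

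For the first piece, I will lower-bound $\E|A_1|$ by size-biasing. Since $A_1$ is the trace of ${}^b\animals \sim \mathcal{Q}_{\nu}^{\lambda/2}$ restricted to animals of volume at most $R$ that visit $\origin$, Lemma~\ref{lemma:size_biasing_for_Poisson_zoo} yields expected total occupation $(\lambda/2)\, m_2^R$ at $\origin$. I will then apply Corollary~\ref{coro:expected_size_without_and_with_multiplicity}, noting that under the volume cutoff $R$ the Poisson parameter $\E[\bar\LTmeasure(x)]$ in the proof of Lemma~\ref{lemma:upper_bound_on_expected_number_of_occupation} is in fact bounded by $(\lambda/2)\, m_1^R$, which tightens the multiplicity bound to $(\lambda/2)\, m_1^R + 1$. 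This delivers
\begin{equation*}
\E|A_1| \;\geq\; \frac{(\lambda/2)\, m_2^R}{(\lambda/2)\, m_1^R + 1} \;=\; \frac{\lambda\, m_2^R}{\lambda\, m_1^R + 2}.
\end{equation*}
Since $R < \infty$ makes $A_1$ almost surely finite, nonamenability then gives $|\extboundary A_1| \geq \Cheeger(\graph) \cdot |A_1|$ pointwise, and hence $\E|\extboundary A_1| \geq \Cheeger(\graph) \cdot \E|A_1|$.

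For the second piece, I will condition on ${}^b\animals$, which freezes $A_1$, $\extboundary A_1$, and the map $\Psi_1(\cdot, A_1)$, and then take the conditional expectation over the independent ${}^s\animals$ (independence granted by Theorem~\ref{thm:basic_properties_of_PPPs}~$(iii)$). For each $x \in \extboundary A_1$, the event $\{\Phi_1(x, A_1, {}^s\animals) \neq \emptyset\}$ requires at least one ${}^s\animals$-animal rooted at $x$ and at least one rooted at $\Psi_1(x, A_1)$; each count is $\POI(\lambda/2)$, and when $x \neq \Psi_1(x, A_1)$ the two counts are independent by Theorem~\ref{thm:basic_properties_of_PPPs}~$(ii)$, so the joint success probability is at least $(1 - e^{-\lambda/2})^2$ in either case. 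Since $\Psi_1$ is injective on $\extboundary A_1$ by the free product structure recorded in Definition~\ref{def:mapping_into_the_good_direction}, the quantity $|B_1|$ equals exactly the number of $x \in \extboundary A_1$ for which the sprinkling succeeds, so linearity of expectation gives
\begin{equation*}
\E[|B_1| \,\mid\, {}^b\animals] \;\geq\; (1 - e^{-\lambda/2})^2 \cdot |\extboundary A_1|.
\end{equation*}
Taking expectations and combining with the first piece yields~\eqref{eq:lower_bound_on_reproduction_rate_branching_process_first_generation}. The only nontrivial bookkeeping is the tightening from $m_1$ to $m_1^R$ in the multiplicity bound; everything else is clean Poisson independence plus nonamenability.
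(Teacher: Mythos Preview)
Your proof is correct and follows essentially the same route as the paper's: both bound $\E|A_1|$ via size-biasing (Lemma~\ref{lemma:size_biasing_for_Poisson_zoo}) and the multiplicity-neglecting Corollary~\ref{coro:expected_size_without_and_with_multiplicity}, then condition on ${}^b\animals$ and use independence of ${}^s\animals$ together with the $\POI(\lambda/2)$ root counts to extract the factor $(1-e^{-\lambda/2})^2$, with nonamenability supplying the $\Cheeger(\graph)$. Your explicit remark that the multiplicity bound tightens from $m_1$ to $m_1^R$ under the volume cutoff is something the paper also uses but leaves implicit in its application of~\eqref{eq:expected_size_without_and_with_multiplicity}.
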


\begin{proof}
	Using Corollary \ref{coro:expected_size_without_and_with_multiplicity} on neglecting multiplicities and 
	Lemma~\ref{lemma:size_biasing_for_Poisson_zoo} about size-biasing, 
		\begin{equation}\label{eq:lower_bound_on_expected_size_A1}
	\begin{aligned}		
		\E  \left| A_1 \right| & \overset{\eqref{def:first_step_of_exploration_branching_A1_B1}}{=}
		\E  \left| \trace \left( {}^b \animals \ind \left[ \zoospace^R( \{ \origin \} ) \right] \right) \right|
		\\ & \overset{\eqref{eq:expected_size_without_and_with_multiplicity}}{\geq} 
		\frac{1}{(\lambda \slash 2) \cdot m^R_1 + 1} \cdot
		\E \left[ \totalsize^{ {}^b \animals \ind \left[ \zoospace^R( \{ \origin \} ) \right] } \right]
		\overset{\eqref{eq:size_biasing_for_Poisson_zoo}}{=}
		\frac{2 \cdot \lambda \cdot m_2^R}{2\cdot (\lambda \cdot m^R_1 + 2)},
	\end{aligned}
		\end{equation}
	noting that ${}^b \animals \sim \mathcal{Q}_{\nu}^{\lambda/2}$. Then,
		\begin{align*}
		\E  \left| B_1 \right| & \overset{\eqref{def:first_step_of_exploration_branching_A1_B1}}{=}
		\E  \left| \Phi_1 \left( \extboundary A_1, A_1, {}^s \animals \right) \right|\overset{\eqref{eq:gluing_of_fat_for_sets}}{=} 
		\E \left[ \sum_{x \in \extboundary A_1} \Phi_1 \left( x, A_1, {}^s \animals \right) \right] \\ & \overset{\eqref{eq:gluing_of_fat}}{=}
		\E \left[ 
			\sum_{x \in \extboundary A_1} \prob \left. \left( 
				\parbox{19em}{
					$\exists \, H \in \latticeanimals_x \, : \, {}^s \animals( (x, H) ) > 0; \\ 
					\exists \, H' \in \latticeanimals_{\Psi_1(x, A_1)} \, : \, {}^s \animals( (\Psi_1(x, A_1), H') ) > 0$
				}
				\, \right| \,
				A_1							
			\right)
		\right] \\ & \overset{(*)}{\geq}
		\E \left[ \sum_{x \in \extboundary A_1} 
			\prob \big( \exists \, H \in \mathcal{H}_x \, : \, {}^s \animals ((x,H)) > 0 \big)^2
		\right] \overset{(**)}{=}
		 \left(1-e^{-\lambda/2}\right)^2 \E \left| \extboundary A_1 \right| \\ & \overset{\eqref{def:Cheeger_constant}}{\geq}
		 \left(1-e^{-\lambda/2}\right)^2 \cdot \Cheeger(\graph) \cdot \E \left| A_1 \right| \overset{\eqref{eq:lower_bound_on_expected_size_A1}}{\geq}
		\frac{ \left(1-e^{-\lambda/2}\right)^2 \cdot \Cheeger(\graph) \cdot \lambda}{\lambda \cdot m_1^R + 2 } \cdot m_2^R.
	\end{align*}
	Here, in $(*)$ we used the independence of ${}^s \animals$ from ${}^b \animals$ and the invariance of the law of the former under the action of $\grp$, meanwhile $(**)$ is due to the fact that ${}^s \animals \sim \mathcal{Q}_{\nu}^{
	\lambda/2}$.
\end{proof}

For any vertex $x \in V(\graph)$, let us denote the connected subgraph of $\graph$ by $\graph^x$ that is the component containing $x$ after erasing all the edges $\{ z,x \}$, $z \in \mathcal{N}_2(x)$. 
Due to the structure of the free product, the subgraphs $\graph^x$, $x \in B_1$, are disjoint.
Moreover, the way we defined $E_1$, we also know that no animals of ${}^b \animals$ rooted in $\graph^x$ have been used yet.
Hence, by $(ii)$ of Theorem \ref{thm:basic_properties_of_PPPs}, we can continue the exploration through the vertices of $B_1$ independently of each other, so one can indeed think of the vertices of $B_1$ as the children of $\origin$.

In order to define a general step in the exploration process, let us introduce the following notions for a vertex $x \in V(\graph)$:
\begin{equation}
	\label{def:reproduction_in_the_branching}
	\alpha_x := \trace \left( {}^b \check{ \animals }_{x, \mathcal{N}_1(x)}^R \right);
	\quad
	\hat{\beta}_x := \extboundary \alpha_x \setminus \ball(x, 1);
	\quad
	\beta_x := \Phi_1 \left( \hat{ \beta }_x, \alpha_x, {}^s \animals \right).
\end{equation}
In words: $\alpha_x$ is the set of all animals that hit at least one vertex of $\mathcal{N}_1(x)$, but not $x$; $\hat{ \beta }_x$ is the exterior boundary except the vertices of $\mathcal{N}_1(x)$ that we have already considered; and $\beta_x$ is the mapping of the considerd exterior vertices into good vertices. 
Note here that is indeed enough to exclude the vertices of $\ball(x,1)$ in $\hat{\beta}_x$ as the corresponding vertices of $\beta_x$ will always open up not yet explored territories due to Claim \ref{claim:properties_of_free_product_of_graphs}.
Moreover, one immediate consequence of this construction and Corollary \ref{coro:lower_bound_on_expected_size_of_fat_of_animals_through_one_vertex_on_free_product} is the following.

\begin{claim}
	\label{claim:lower_bound_on_reproduction_rate_branching_process}
	Under the assumptions as above, for any $x \in V(\graph)$ we have
	\begin{equation}
		\label{eq:lower_bound_on_reproduction_rate_branching_process}
		\E \left[ \left| \beta_x \right| \right] \geq 
		\left(1-e^{-\lambda/2}\right)^2  \cdot \left( \frac{ \lambda \cdot \Cheeger(\graph)^2 \cdot m_2^R}{ 2 \cdot ( \lambda \cdot m_1^R + 2) } - (\degree + 1)\right).
	\end{equation}
\end{claim}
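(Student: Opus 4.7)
The plan is to propagate lower bounds through the chain $\alpha_x \longrightarrow \extboundary\alpha_x \longrightarrow \hat\beta_x \longrightarrow \beta_x$, vertex-by-vertex reprising the calculation of Claim~\ref{claim:lower_bound_on_reproduction_rate_branching_process_first_generation}. The split ${}^b\animals$ and ${}^s\animals$ from~\eqref{def:sprinkling_and_branching_parts_of_animals} is exactly what we need: ${}^b\animals$ alone will feed into $\alpha_x$, hence into $\hat\beta_x$, while the independent ${}^s\animals$ will enter only at the final sprinkling step.

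The first (and main) step is to apply Corollary~\ref{coro:lower_bound_on_expected_size_of_fat_of_animals_through_one_vertex_on_free_product} to the point process ${}^b\animals \sim \mathcal{Q}_\nu^{\lambda/2}$ at the vertex $x$. Since without loss of generality $\graph_1$ is the ``good'' direction in~\eqref{eq:lower_bound_on_expected_size_of_fat_of_animals_through_one_vertex_on_free_product}, this yields
\[
\E|\alpha_x| \;\geq\; \frac{\Cheeger(\graph)\cdot \lambda\cdot m_2^R}{2\,(\lambda\cdot m_1^R + 2)}\,.
\]
The pointwise Cheeger bound $|\extboundary A|\geq \Cheeger(\graph)\cdot|A|$ (trivial also for $A=\emptyset$) then gives $\E|\extboundary\alpha_x|\geq \Cheeger(\graph)\,\E|\alpha_x|$, and subtracting the deterministic $|\ball(x,1)|=\degree+1$ delivers exactly the bracketed factor on the right-hand side of~\eqref{eq:lower_bound_on_reproduction_rate_branching_process} as a lower bound on $\E|\hat\beta_x|$.

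Finally, I use the independent sprinkling process ${}^s\animals\sim\mathcal{Q}_\nu^{\lambda/2}$ to convert $\hat\beta_x$ into $\beta_x$. Conditionally on $\alpha_x$ (hence on $\hat\beta_x$), each $y\in\hat\beta_x$ contributes a distinct element $\Psi_1(y,\alpha_x)$ to $\beta_x$ provided ${}^s\animals$ places at least one animal rooted at each of the (at most two) distinct vertices $y$ and $\Psi_1(y,\alpha_x)$. These two rooting events are independent by Theorem~\ref{thm:basic_properties_of_PPPs}~(ii), each has probability $1-e^{-\lambda/2}$ by part~(iv), and by~\eqref{eq:mapping_into_the_good_direction_for_sets} the map $y\mapsto\Psi_1(y,\alpha_x)$ is injective on $\hat\beta_x$, so no double-counting occurs. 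Combining with the previous bound gives $\E|\beta_x|\geq (1-e^{-\lambda/2})^2\,\E|\hat\beta_x|$, which is~\eqref{eq:lower_bound_on_reproduction_rate_branching_process}.

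The only subtle point, more than a real obstacle, is the appearance of $m_1^R$ rather than $m_1$ in the denominator of the first display. Tracing through the use of Corollary~\ref{coro:expected_size_without_and_with_multiplicity} hidden inside the proof of Corollary~\ref{coro:lower_bound_on_expected_size_of_fat_of_animals_through_one_vertex_on_free_product}, the only expected-multiplicity bound invoked is on the truncated zoo ${}^b\animals^R$, whose expected local time at any vertex is $(\lambda/2)\,m_1^R$; this improves the constant in Lemma~\ref{lemma:upper_bound_on_expected_number_of_occupation} to $(\lambda/2)\,m_1^R+1$, and the factors of $1/2$ then conspire to produce the denominator $\lambda m_1^R+2$ announced above.
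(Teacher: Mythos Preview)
Your proposal is correct and follows essentially the same approach as the paper's proof: bound $\E|\alpha_x|$ via Corollary~\ref{coro:lower_bound_on_expected_size_of_fat_of_animals_through_one_vertex_on_free_product} applied to ${}^b\animals\sim\mathcal{Q}_\nu^{\lambda/2}$, pass to $\E|\hat\beta_x|$ via the Cheeger inequality minus $|\ball(x,1)|=\degree+1$, then multiply by $(1-e^{-\lambda/2})^2$ using the independent sprinkling process ${}^s\animals$. Your final paragraph correctly identifies the small refinement (replacing $m_1$ by $m_1^R$ in the multiplicity bound, since only the truncated zoo is involved) that the paper also makes implicitly when citing~\eqref{eq:lower_bound_on_expected_size_of_fat_of_animals_through_one_vertex_on_free_product}.
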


\begin{proof}
	The proof is similar to that of Claim \ref{claim:lower_bound_on_reproduction_rate_branching_process_first_generation}:
		\begin{align*}
		\E  \left| \beta_x \right| & \overset{(*)}{=}
		\E \left[ \sum_{y \in \hat{ \beta }_x} \prob \left. \left( 
			\parbox{19em}{
				$\exists H \in \latticeanimals_y \, : \, {}^s \animals( (y, H) ) > 0; \\ 
				\exists H' \in \latticeanimals_{\Psi_1(y,\alpha_x)} \, : \, {}^s \animals( (\Psi_1(y,\alpha_x), H') ) > 0$
			}
			\, \right| \, 
			\alpha_x, \, \hat{ \beta }_x
		\right) \right] \\ & \overset{(**)}{\geq}
		\E \left[ \sum_{y \in \hat{ \beta }_x} 
			\prob^2 \left( \exists \, H \in \latticeanimals_y \, : \; {}^s \animals \left( (y,H) \right) > 0 \right)
		\right] \overset{(\bullet)}{=}
		\left(1-e^{-\lambda/2}\right)^2 \cdot \E\left[ \left| \hat{ \beta }_x \right| \right] \\ & \overset{\eqref{def:reproduction_in_the_branching}}{=}
		\left(1-e^{-\lambda/2}\right)^2  \cdot \E \left[ \left| \extboundary \alpha_x \setminus \ball(x,1)  \right| \right] \overset{\eqref{def:Cheeger_constant}}{\geq}
		\left(1-e^{-\lambda/2}\right)^2  \cdot \big( \Cheeger(\graph) \cdot \E \left[ \left| \alpha_x \right| \right] - (\degree + 1) \big) \\ & \overset{(\bullet \bullet)}{\geq}
		\left(1-e^{-\lambda/2}\right)^2  \cdot \left( \Cheeger(\graph) \cdot \frac{ \lambda \cdot \Cheeger(\graph) \cdot m_2^R}{ 2 \cdot ( \lambda \cdot m_1^R + 2) } - (\degree + 1)\right).
	\end{align*}
	
	Here in $(*)$ we used the definitions of $\beta_x$ from 
	\eqref{def:reproduction_in_the_branching} and $\Phi_1$ from \eqref{eq:gluing_of_fat},
	$(**)$ is due to the independence of ${}^s \animals$ and ${}^b \animals$ and the invariance of the former under the action of $\grp$,
	$(\bullet)$ holds since ${}^s \animals \sim \mathcal{Q}_{\nu}^{\lambda/2}$, and
	$(\bullet \bullet)$ follows from \eqref{eq:lower_bound_on_expected_size_of_fat_of_animals_through_one_vertex_on_free_product} noting that ${}^b \animals \sim \mathcal{Q}_{\nu}^{\lambda/2}$.
\end{proof}

Now we are ready to define the explored set, and hence the exploration process for a general $n > 1$ as
\begin{equation}
	\label{def:general_step_of_exploration_branching}
	E_n := E_{n-1} \cup A_n \cup B_n \cup \big\{ \Psi_1^{-1}(x,A_n) : x \in B_n \big\},
\end{equation}
where
\begin{equation}
	\label{def:exploration_branching_An_Bn_sets}
	A_n := \bigcup_{x \in B_{n-1}}  \alpha_x 
	\qquad \text{and} \qquad
	B_n :=\bigcup_{x \in B_{n-1}}  \beta_x .
\end{equation}
According to the structure of the free product and our construction,  the set $E_n$ is connected.
Furthermore, the unions of \eqref{def:exploration_branching_An_Bn_sets} are disjoint and hence one can think of the vertices in $\beta_x$ as the children of the vertex $x \in B_{n-1}$, in line with the previous terminology.

Obviously, the sequence $\{ E_n \}_{n \geq 1}$ of explored sets is monotone increasing, hence it is meaningful to define $E_{\infty} := \lim_{n \to \infty} E_n$, which, for any $R$, is a subset of the cluster of the origin in the true zoo.
Consequently, to prove that this cluster is indeed infinite, we only need to show that $|E_{\infty}| = \infty$ holds. As the following statement shows, choosing a large enough $R$ we have such indefinite run with positive probability.

\begin{proposition}
	\label{prop:the_exploration_runs_indefinitely_on_free_product}
	Using the notation introduced above, if we choose $R$ to be such that
	\begin{equation}
	\begin{aligned}
		\label{cond:large_enough_R_for_free_products}
		m_2^R 
		&>
		\left( \left(1-e^{-\lambda/2} \right)^{-2} + (d+1) \right) \frac{ 2 \cdot ( \lambda \cdot m_1 + 2)  }{  \lambda \cdot \Cheeger(\graph)^2 } \\
		&= \frac{16 + o(1)}{\Cheeger(\graph)^2} \cdot \frac{1}{\lambda^3}, \quad \textrm{ as }\lambda\to 0,
	\end{aligned}
	\end{equation}

	holds, then the exploration process runs indefinitely with positive probability:
	\begin{equation}
		\label{eq:the_exploration_runs_indefinitely_on_free_product}
		\prob \left( | E_{\infty} | = +\infty \right) > 0.
	\end{equation}
\end{proposition}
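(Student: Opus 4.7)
The plan is to embed a supercritical Galton--Watson (GW) branching process into the exploration, so that its survival forces $|E_\infty| = +\infty$. The key structural input is the cutpoint property, Claim~\ref{claim:properties_of_free_product_of_graphs}(2): every vertex of a free product is a cutpoint, so the subgraphs $\{\graph^x \setminus \{x\}\}_{x \in B_{n-1}}$ are pairwise disjoint, and any connected animal hitting $\mathcal{N}_1(x)$ while avoiding $\{x\} \cup \mathcal{N}_2(x)$ must lie entirely in $\graph^x \setminus \{x\}$. Consequently, the animals from ${}^b\animals$ contributing to $\alpha_x$ and those from ${}^s\animals$ used through $\Phi_1$ in the construction of $\beta_x$ all live within $\graph^x \setminus \{x\}$. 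The exclusion of $\ball(x,1)$ in $\hat\beta_x$ and the use of the independent sprinkling copy ${}^s\animals$ are precisely what keep this disjointness alive across generations.

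Iterating from~\eqref{def:first_step_of_exploration_branching}--\eqref{def:exploration_branching_An_Bn_sets}, at each step $n$ the pairs $\{(\alpha_x, \beta_x) : x \in B_{n-1}\}$ are built from disjoint portions of ${}^b\animals$ and ${}^s\animals$, also disjoint from the portions involved in forming $E_{n-1}$. Theorem~\ref{thm:basic_properties_of_PPPs}(ii) then gives that they are mutually independent and independent of the entire past of the exploration. The $\grp$-invariance of $\nu$ and transitivity of $\grp$ yield that the law of $|\beta_x|$ does not depend on $x$; call this distribution $\mathcal{Z}$. For any finite $R$, $\mathcal{Z}$ is a.s.\ finite, since $\alpha_x$ is the trace of Poisson-many animals of volume $\leq R$ rooted within a bounded neighbourhood of $\mathcal{N}_1(x)$. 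By the disjointness of the territories, $B_n$ is the disjoint union of the $\beta_x$'s, so $|B_n| = \sum_{x \in B_{n-1}} |\beta_x|$; hence $(|B_n|)_{n \geq 1}$ is a GW process with offspring distribution $\mathcal{Z}$, seeded by $|B_1|$ ancestors.

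From Claim~\ref{claim:lower_bound_on_reproduction_rate_branching_process}, together with the trivial bound $m_1^R \leq m_1$,
\begin{equation*}
\E[\mathcal{Z}] \geq \left(1-e^{-\lambda/2}\right)^2 \left( \frac{\lambda\, \Cheeger(\graph)^2\, m_2^R}{2(\lambda m_1 + 2)} - (\degree + 1) \right),
\end{equation*}
and under~\eqref{cond:large_enough_R_for_free_products} this right-hand side strictly exceeds $1$. Classical Galton--Watson theory then gives strictly positive survival probability from a single ancestor; since survival forces $|B_n| \to \infty$ and hence $|E_\infty| = +\infty$, while Claim~\ref{claim:lower_bound_on_reproduction_rate_branching_process_first_generation} ensures $\prob(|B_1| \geq 1) > 0$, we obtain $\prob(|E_\infty| = +\infty) > 0$. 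The main obstacle will be the careful bookkeeping behind the disjointness and independence claims of the first two paragraphs, especially tracking how the maps $\Psi_1$ and $\Phi_1$---which may shift into $\mathcal{N}_2$-neighbours in order to bypass the parity issue---keep subsequent generations within their prescribed disjoint territories.
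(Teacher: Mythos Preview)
Your proposal is correct and follows essentially the same approach as the paper's own proof: embed a supercritical Galton--Watson process via the generations $(|B_n|)_{n\ge 1}$, use Claim~\ref{claim:lower_bound_on_reproduction_rate_branching_process} together with $m_1^R\le m_1$ and condition~\eqref{cond:large_enough_R_for_free_products} to get offspring mean strictly larger than $1$, and use Claim~\ref{claim:lower_bound_on_reproduction_rate_branching_process_first_generation} to ensure $\prob(|B_1|\ge 1)>0$. If anything, you spell out the disjointness/independence structure (via the cutpoint property and Theorem~\ref{thm:basic_properties_of_PPPs}(ii)) more explicitly than the paper does inside the proof itself---the paper relegates that discussion to the text preceding the proposition and simply writes ``as we argued above''.
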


\begin{proof}
	Without loss of generality we can still assume that for the chosen $R$ that satisfies \eqref{cond:large_enough_R_for_free_products} the first inequality of \eqref{eq:lower_bound_on_expected_size_of_fat_of_animals_through_one_vertex_on_free_product} in Corollary \ref{coro:lower_bound_on_expected_size_of_fat_of_animals_through_one_vertex_on_free_product} holds.
	Now, from \eqref{def:first_step_of_exploration_branching} and \eqref{def:general_step_of_exploration_branching} it follows that $E_n \supseteq B_n$ holds for all $n \geq 1$.
	Consequently, we have \eqref{eq:the_exploration_runs_indefinitely_on_free_product} once we prove that 
	\begin{equation}
		\label{eq:branching_process_explodes}
		\prob \left( \lim_{n \to \infty} |B_n| = + \infty \right) > 0.
	\end{equation}
	As we argued above, the sequence $\{ |B_n| \}_{n \geq 0}$, where $B_0 := \{ \origin \}$, defines a branching process that, apart from the first generation, follows a Galton-Watson process.
	Hence, it follows from the theory of Galton-Watson branching processes (see, for example, \cite[Section 5.1]{LyonsPeres2016}), that \eqref{eq:branching_process_explodes} holds if the expected number of successors is strictly greater than $1$.
	And, due to assumption \eqref{cond:large_enough_R_for_free_products}, Claims~\ref{claim:lower_bound_on_reproduction_rate_branching_process_first_generation} and~\ref{claim:lower_bound_on_reproduction_rate_branching_process}, and $m_1^R \leq m_1$ for any $R$, we have that
	\begin{equation}
		\E \left[ \left| B_1 \right| \right] \geq 
		\E \left[ \left| \beta_x \right| \right] \geq
		\left(1-e^{-\lambda/2}\right)^2  \cdot \left( \frac{ \lambda \cdot \Cheeger(\graph)^2 \cdot m_2^R}{ 2 \cdot ( \lambda \cdot m_1 + 2) } - (\degree + 1)\right) > 1
	\end{equation}
	holds for any $x \in V(\graph)$, and so we are done.
\end{proof}

\begin{proof}[Proof of Theorem~\ref{t.free}]
	From the assumption $m_2(\nu_{\origin}) = \infty$ and the Monotone Convergence Theorem, it follows that we can choose $R$ large enough so that~\eqref{cond:large_enough_R_for_free_products} holds. Thus Proposition~\ref{prop:the_exploration_runs_indefinitely_on_free_product} says that the connected component of $\origin$ is infinite with positive probability.
	This, by the ergodicity \eqref{obs:poisson_zoo_is_ergodic}, also means the almost sure existence of an infinite cluster.
\end{proof}

\subsection{Worms on nonamenable graphs}\label{ss.exploworm}

We now turn our attention to worms on general nonamenable graphs. We will actually work under slightly more general assumptions (see Proposition~\ref{p.growth} and the paragraph before), and then prove Theorem~\ref{thm:random_length_worms_model_on_nonamenable_graphs} by simply substituting the results of Subsection~\ref{subsection:worms_on_nonamenable_graphs_fattening} into that framework.

More precisely, we will assume that we are given a nonamenable graph $\graph$, a $\grp \subgrp \Aut(\graph)$ subgroup of its automorphisms that acts transitively on $V(\graph)$, a $\grp$-rotation invariant measure $\nu_{\origin}$ on $\latticeanimals_{\origin}$, the measure $\nu$ on $\zoospace$ induced from $\nu_{\origin}$ via \eqref{def:induced_measure_on_zoospace} and the corresponding Poisson point process $\animals = \sum_{i \in I} \delta_{(x,H)_i} \sim \mathcal{Q}_{\nu}^\lambda$. Recall also the notion of restriction from Definition~\ref{def:point_measure_restricted_to_hit_B_but_not_the_closure_of_A}.

Subsubsection~\ref{sss.explo} will construct in detail the exploration process sketched in Subsection~\ref{subsection:random_length_worms_model_introduction}. Then, Subsubsection~\ref{sss.expon} will prove that exponential growth of the exploration process happens forever with positive probability.

\subsubsection{The exploration process}\label{sss.explo}

The formal definition of the exploration is as follows. For the initial step, consider the following random sets:
\begin{equation}
	\label{def:exploration_process_initial_step}
	E_0 := \trace \left( \animals \ind \left[ \zoospace( \{  \origin \} ) \right] \right); \qquad
	B_0 := \extboundary E_0.	
\end{equation}
That is, $E_0$ is the set of covered by the animals ever visiting $\origin$, and $B_0$ is its exterior boundary.
One can think of $E_0$ as the set of all vertices explored in the $0$'th step and $B_0$ as the basis for the next step, in a sense that the next exploration step will be done through its vertices.

After this initialization, the general step can be described as follows:
\begin{equation}
	\label{def:exploration_process_general_step}
	C_{n+1} := \trace \left( \check{\animals}_{E_{n}, B_{n}}^R \right); \qquad
	E_{n+1} := E_{n} \cup C_{n+1}; \qquad
	B_{n+1} := \extboundary E_{n+1} \setminus \extboundary E_{n}.
\end{equation}
In words, for the fat $C_{n+1}$ we take the set covered by all the animals of volume at most $R$ that visit $B_{n}$ but avoid the closure $\overline{E}_{n}$ otherwise. Taking the union of this fat $C_{n+1}$ with the previously explored set $E_{n}$ defines the  next explored set $E_{n+1}$. Then the next exploration will happen through $B_{n+1}$: all those vertices of the exterior boundary of $E_{n+1}$ that are new. This subtraction of the exterior boundary of the previous step is needed to guarantee that no ``direction'' was used more than once: if we could not fatten through some vertices in a step, we will not be able to do so in the future steps, either.

\begin{figure}[htbp]
\SetLabels
(0.75*0.72)\textcolor{red}{$E_{n}$}\\
(0.8*0.42)\textcolor{red}{$B_{n}$}\\
(0.45*0.38)\textcolor{cyan}{$C_{n+1}$}\\
(0.67*0.18)\textcolor{blue}{$E_{n+1}$}\\
(0.83*0.03)\textcolor{blue}{$B_{n+1}$}\\
\endSetLabels
\centerline{
\AffixLabels{
\includegraphics[width=0.35\textwidth]{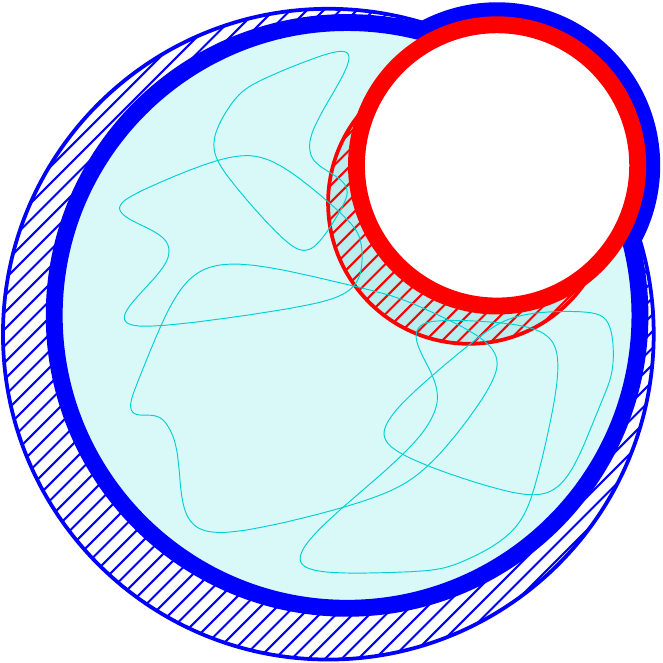}
}}
\caption{The fat $C_{n+1}$ is the growth through $B_{n}$, yielding $E_{n+1}$, and then $B_{n+1}$ is the new part of the exterior boundary of $E_{n+1}$.}
\label{f.growth}
\end{figure}

An important consequence of this choice of $B_n$ can be described once we define the filtration corresponding to the exploration process:
\begin{equation}
	\label{def:filtration_of_the_exploration}
	\Falg_n := \sigma \left( \{ E_k \}_{k = 0}^n, \, \{ B_k \}_{k = 0}^n, \, \{ C_k \}_{k = 0}^n \right),
\end{equation}
where $C_0 := \emptyset$. In words, for $n \geq 0$ the $\sigma$-algebra $\Falg_n$ contains all the information we obtained about and from the exploration process up to the $n$'th step. That is, if we have $\Falg_n$ then we know everything in need to generate the set $C_{n+1}$ and hence $E_{n+1}$ and $B_{n+1}$.

\begin{claim}
	\label{claim:independence_of_the_next_step_of_the_exploration_from_the_past}
	For any $n \in \N$, given $E_n$ and $B_n$ as in \eqref{def:exploration_process_general_step}, the restricted process $\check{ \animals }_{E_n, B_n }^R$ and hence the set $C_{n+1}$ is independent of $\Falg_n$.
\end{claim}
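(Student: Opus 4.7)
I would prove this by combining Theorem~\ref{thm:basic_properties_of_PPPs}~(i)--(ii) with a structural observation: every animal that could be relevant to $\check{\animals}_{E_n,B_n}^R$ lives in a region of $\zoospace$ disjoint from the one supporting all animals used in constructing $E_0, C_1, \ldots, C_n$.

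Fix a deterministic tuple $a=(A_0,\ldots,A_n;\, B'_0,\ldots,B'_n;\, C'_0,\ldots,C'_n)$ attainable by $(E_k,B_k,C_k)_{k\le n}$, and work on the event $\Omega_a$ that this tuple is realised. On $\Omega_a$ one has $(E_n,B_n)=(A_n,B'_n)$ and thus the deterministic identity $\check{\animals}_{E_n,B_n}^R = \animals\,\ind[\mathcal{B}_a]$ with
$$\mathcal{B}_a \;:=\; \zoospace^R(B'_n)\setminus \zoospace^R\big(\overline{A_n}\setminus B'_n\big) \;\in\; \zooalgebra.$$
The combinatorial core is that every animal needed to reconstruct $E_0$ or any $C_k$ with $1\le k\le n$ lies outside $\mathcal{B}_a$. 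Indeed, any animal contributing to $E_0$ contains $\origin\in E_0\subseteq A_n$, and since $B'_n\subseteq \extboundary A_n$ is disjoint from $A_n$, we have $A_n\subseteq \overline{A_n}\setminus B'_n$; so the trace of such an animal meets $\overline{A_n}\setminus B'_n$, excluding it from $\mathcal{B}_a$. Moreover, the sets $B_0,B_1,\ldots$ are pairwise disjoint (a direct consequence of $B_j=\extboundary E_j\setminus \extboundary E_{j-1}$ and the monotonicity $E_{j-1}\subseteq E_j$), so for each $1\le k\le n$ we have $B_{k-1}\subseteq \overline{A_n}\setminus B'_n$; hence every volume-$\le R$ animal hitting $B_{k-1}$ — exactly the animals needed to decide membership in $C_k$ — has its trace meeting $\overline{A_n}\setminus B'_n$ and is therefore outside $\mathcal{B}_a$ as well.

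It follows that the sets $(E_k,C_k)_{k\le n}$, the derived $(B_k)_{k\le n}$, and in particular $\ind[\Omega_a]$, are all measurable with respect to $\animals\,\ind[\zoospace\setminus \mathcal{B}_a]$. Since $\mathcal{B}_a$ and its complement are disjoint events in $\zooalgebra$, Theorem~\ref{thm:basic_properties_of_PPPs}~(i)--(ii) implies that $\animals\,\ind[\mathcal{B}_a]$ has the unconditional law $\mathcal{Q}_{\nu_{\mathcal{B}_a}}^\lambda$ and is independent of $\animals\,\ind[\zoospace\setminus \mathcal{B}_a]$. Conditioning on $\Omega_a$, the random measure $\check{\animals}_{E_n,B_n}^R$ still has this unconditional law and is independent of $\Falg_n$. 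As $\zoospace$ is countable (the graph $\graph$ being locally finite with finite animals), the events $\Omega_a$ form a countable partition generating $\Falg_n$, and the claim assembles from its conditional versions.

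The main subtlety — and really the only thing that is not automatic — is identifying the correct reading of ``independent of $\Falg_n$'': since $(E_n,B_n)$ is itself $\Falg_n$-measurable, the statement is to be understood as the Markov-type property that, conditionally on $\Falg_n$, the law of $\check{\animals}_{E_n,B_n}^R$ depends only on $(E_n,B_n)$ and coincides with that of the same restriction applied to an independent copy of $\animals$. Once the structural disjointness $\mathcal{B}_a \cap \{\text{animals used for }\Falg_n\} = \emptyset$ is in place, the rest is a routine application of the colouring/restriction properties of a Poisson point process.
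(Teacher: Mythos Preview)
Your proof is correct and follows the same strategy as the paper's: the animals defining $\check{\animals}_{E_n,B_n}^R$ live in a region of $\zoospace$ disjoint from those exposed in earlier steps, whence Theorem~\ref{thm:basic_properties_of_PPPs}~(ii) gives independence. The paper's own proof is a two-sentence sketch of precisely this idea; your version supplies the missing details (the pairwise disjointness of the $B_j$, the inclusion $B_{k-1}\subseteq\overline{A_n}\setminus B'_n$, the measurability of $\Omega_a$ with respect to $\animals\,\ind[\zoospace\setminus\mathcal{B}_a]$, and the correct Markov-type reading of the statement), so your argument is a fleshed-out version of the paper's rather than a different route.
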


\begin{proof}
	As it was mentioned above the way we constructed $B_n$ from $E_n$ and $E_{n-1}$ it follows that it constituted of such subset of vertices that were not exposed before.	
	Consequently, the process is restricted to a disjoint subset of animals and due to $(ii)$ of Theorem \ref{thm:basic_properties_of_PPPs} it is independent from the previously exposed animals.
\end{proof}

Since $\{E_n\}$ is increasing, one can consider the countable union $E_{\infty} := \lim_{n \to \infty} E_n = \bigcup_{n \in \N} E_n$, and ask if it is infinite or not. If it is infinite, then the cluster of $\origin$ is infinite, as well. But we can and will ask for more:  $| E_{n+1} | > a \cdot | E_{n} |$ for all $n\ge 0$, with some large enough $a>1$. This will be useful, since if we have this for $n-1$, then
	\begin{equation}
		\label{eq:lower_bound_on_Bn_by_En_assuming_growth}
		| \extboundary E_n | - |B_n | \overset{\eqref{def:exploration_process_general_step}}{\leq} 
		| \extboundary E_{n-1} | \leq 
		\degree \cdot | E_{n-1} | \leq
		\frac{\degree}{a} \cdot |E_n|,
	\end{equation}
where $\degree$ is the degree in $\graph$. This means that the old part of $\extboundary E_n$ is small, hence we can expect some large growth $C_{n+1}$, and then $|E_{n+1}| > a \cdot |E_{n}|$ will hold again with a large probability, maintaining an inductive argument.

\subsubsection{Exponential growth}\label{sss.expon}

A key idea is to use the following general lemma, which, once stated, is easy to prove. 

\begin{lemma}[Concentrated exponential growth implies survival]\label{l.growth}
Consider an $\N$-valued stochastic process $(Z_n)_{n\ge 0}$, with increments $Y_{n+1}=Z_{n+1}-Z_n \ge 0$. Define the events, for some $b>0$ and $a>1$,
\begin{equation}\label{e.cE}
	\cE_0 := \left\{ Z_0 \ge b \right\} \qquad \text{and} \qquad \cE_{n} := \left\{ Z_n \ge a \cdot  Z_{n-1}  \right\}, \, n \geq 1.
\end{equation}
Let $\Falg_n$ be the sigma-algebra generated by $\{Z_0,Z_1,\dots, Z_n\}$, and $\Falg_n^{\cE}$ be its restriction on the event $\cE_0\cap\dots\cap \cE_n$.  Assume that $\prob ( \cE_0 ) > 0$ and that, for every $n\ge 0$,
\begin{equation}\label{e.EY}
\E[Y_{n+1} \,|\, \Falg_n^\cE ] \ge 2a \cdot Z_n
\end{equation}
and
\begin{equation}\label{e.VarY}
\Var[Y_{n+1} \,|\, \Falg_n^\cE ] \leq C \cdot \E[Y_{n+1} \,|\, \Falg_n^\cE ],
\end{equation}
where we assume that $C<b/2$. Then 
\begin{equation}\label{e.ZZZ}
\prob\big(Z_n > 0 \text{ for all }n\ge 0\big) \ge \prob\left( \bigcap_{n=0}^\infty \cE_n \right) > 0\,.
\end{equation}
\end{lemma}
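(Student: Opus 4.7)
The plan is to apply Chebyshev's inequality at each step to bound the conditional failure probability $\P(\overline{\cE}_{n+1} \mid \Falg_n^\cE)$ by a summable deterministic sequence $f(n)$, and then to multiply the survival factors $1-f(n)$ into a positive infinite product.

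First, I would set $m_n := \E[Y_{n+1} \mid \Falg_n^\cE]$ and observe that, by~\eqref{e.EY}, if $Y_{n+1} \ge m_n/2$ then $Y_{n+1} \ge aZ_n$, so $Z_{n+1} \ge (a+1)Z_n > aZ_n$ and $\cE_{n+1}$ holds. Chebyshev combined with~\eqref{e.VarY} then gives, on $\cE_0 \cap \dots \cap \cE_n$,
\[
\P(\overline{\cE}_{n+1} \mid \Falg_n^\cE) \le \P\!\left(|Y_{n+1}-m_n| \ge m_n/2 \,\big|\, \Falg_n^\cE\right) \le \frac{4C}{m_n} \le \frac{2C}{aZ_n}.
\]

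Next, I would use the fact that $\cE_0 \cap \dots \cap \cE_n$ enforces $Z_n \ge a^n b$ to promote this into the deterministic bound $\P(\overline{\cE}_{n+1} \mid \Falg_n^\cE) \le f(n) := 2C/(a^{n+1}b)$. The hypothesis $C < b/2$ together with $a>1$ forces $f(0) = 2C/(ab) < 1/a < 1$; since $f$ is decreasing, all $f(n)$ lie in $[0,1)$, and moreover $\sum_{n\ge 0} f(n) = 2C/(b(a-1)) < \infty$.

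Finally, the tower property yields
\[
\P\!\left(\overline{\cE}_{n+1} \cap \cE_0 \cap \dots \cap \cE_n\right) = \E\!\left[\ind[\cE_0 \cap \dots \cap \cE_n]\,\P(\overline{\cE}_{n+1} \mid \Falg_n^\cE)\right] \le f(n)\,\P(\cE_0 \cap \dots \cap \cE_n),
\]
and partitioning $\cE_0 \cap \dots \cap \cE_n$ by $\cE_{n+1}$ versus its complement gives $\P(\cE_0 \cap \dots \cap \cE_{n+1}) \ge (1-f(n))\,\P(\cE_0 \cap \dots \cap \cE_n)$. Iterating in $n$ and invoking continuity of measure will produce
\[
\P\!\left(\bigcap_{n=0}^\infty \cE_n\right) \ge \P(\cE_0)\,\prod_{n=0}^\infty (1-f(n)) > 0,
\]
the product being strictly positive precisely because all factors lie in $(0,1)$ and $\sum_n f(n) < \infty$. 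Since $\cE_n$ entails $Z_n \ge a^n b > 0$, this yields~\eqref{e.ZZZ}. There is no real obstacle in the argument; the substantive point is simply that the factor $2a$ (rather than $a$) in~\eqref{e.EY} is what gives Chebyshev enough slack to push $Y_{n+1}$ safely past the threshold $(a-1)Z_n$ required for $\cE_{n+1}$, while the assumption $C<b/2$ ensures even the very first factor $1-f(0)$ in the product is positive.
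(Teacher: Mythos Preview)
Your proof is correct and follows essentially the same route as the paper: Chebyshev against the conditional mean, the deterministic lower bound $Z_n\ge a^n b$ on $\cE_0\cap\dots\cap\cE_n$, and the resulting summable sequence $f(n)=2C/(a^{n+1}b)$ multiplied into a positive infinite product. The only cosmetic difference is that the paper first records the equivalent bound $m_n\ge 2b\,a^{n+1}$ before applying Chebyshev, whereas you bound by $2C/(aZ_n)$ first and then substitute $Z_n\ge a^n b$.
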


\begin{proof} First note that \eqref{e.EY} and~\eqref{e.cE} imply, by induction, that 
\begin{equation}\label{e.EYYY}
\E[Y_{n+1} \,|\, \Falg_n^\cE ] \ge 2 b \cdot a^{n+1}.
\end{equation}
Then, using Chebyshev's inequality in step $(*)$ below:	
	\begin{align*}
	 \prob \left( \cE_{n+1}^c \, \left| \, \Falg_n^{\cE} \right. \right) 
	 &= \prob \left(Z_{n+1} \leq a \cdot Z_n \, \left| \, \Falg_n^{\cE} \right. \right)\\
	 & \leq
		\prob \left( Y_{n+1} \leq a \cdot Z_n \, \left| \, \Falg_n^{\cE} \right.  \right) \\ 
		&  =
		\prob \Big( Y_{n+1} - \E \left[ Y_{n+1} \, \left| \, \Falg_n^{\cE} \right. \right] \leq a \cdot Z_n - \E \left[ Y_{n+1} \, \left| \, \Falg_n^{\cE} \right. \right] \, \left| \, \Falg_n^{\cE} \Big) \right.  \\ 
		& \overset{\eqref{e.EY}}{\leq}
		\prob \Big( \left| Y_{n+1} - \E \left[ Y_{n+1} \, \left| \, \Falg_n^{\cE} \right. \right] \right| \geq  \E \left[ Y_{n+1} \, \left| \, \Falg_n^{\cE} \right. \right]/2 \, \left| \, \Falg_n^{\cE} \Big) \right.  \\ 
		& \overset{(*)}{\leq}
		\frac{ 4 \cdot \Var \left[ Y_{n+1} \, \left| \, \Falg_n^{\cE} \right. \right] }{ \E \left[ Y_{n+1} \, \left| \, \Falg_n^{\cE} \right. \right]^2} 
		\overset{\eqref{e.VarY}}{\leq}
		\frac{ 4 C }{  \E \left[ Y_{n+1} \, \left| \, \Falg_n^{\cE} \right. \right]} 
		\overset{\eqref{e.EYYY}}{\leq} \frac{4C}{2b \cdot a^{n+1}}.
	\end{align*}
By averaging over the possible values of $Z_0,Z_1,\dots,Z_n$ within the event $\cE_0\cap\dots\cap \cE_n$,
$$
\prob \left( \cE_{n+1}^c \, \left| \, \cE_0\cap\dots\cap \cE_n   \right. \right) \leq \frac{2C}{b \cdot a^{n+1}}.
$$	
Therefore,
$$
\prob\left( \bigcap_{n=0}^\infty \cE_n \right) = \prob(\cE_0) \cdot \prod_{n=0}^\infty \prob \big( \cE_{n+1} \,\big|\, \cE_0 \cap\dots\cap \cE_n \big) \ge \prob(\cE_0) \cdot \prod_{n=0}^\infty \left( 1- \frac{2C}{b \cdot a^{n+1}} \right).
$$
Since the exponentially decaying terms (recall $a>1$) are summable in $n$, the infinite product is positive provided that each factor is positive, which holds by the assumption that $C<b/2$.
\end{proof}

The input from Subsection~\ref{subsection:worms_on_nonamenable_graphs_fattening} on the growth of the exploration process~\eqref{def:exploration_process_general_step} is quite similar to the assumptions in Lemma~\ref{l.growth}: Corollary~\ref{coro:lower_bound_on_expected_size_of_fat_worms_on_nonamenable_graphs} is a first moment lower bound on the increments, similar to~\eqref{e.EY}, while Lemma~\ref{lemma:upper_bound_on_the_second_moment_of_the_occupied_sets_size_for_worms} is a variance upper bound, similar to~\eqref{e.VarY}. The following proposition makes the actual transition from reasonably general Poisson zoos to the lemma. The existence of the function $\vartheta(R)$ in the first moment lower bound on the proposition, for the case of worms, will basically be provided by $m_2^R$; however, on some graphs, some animal measures, one may imagine using other functions, such as a truncated lower moment.

\begin{proposition}
	\label{p.growth}
	Consider a Poisson zoo on an infinite transitive graph $\graph$ with intensity $\lambda>0$ and a lattice animal measure $\nu_o$ that has unbounded support. Assume further that there exist constants $0 < c_1(\graph,\lambda), c_2(\graph), C(R) < \infty$ and a function $\vartheta \, : \, \N \rightarrow \R_{\ge 0}$ with $\lim_{R \to \infty} \vartheta(R) = \infty$ such that for any $R \in \N$ and for any finite connected subset $A \fsubset V(\graph)$ of vertices and any subset $B \subseteq \extboundary A$ of its exterior boundary for the restricted process of Definition~\ref{def:point_measure_restricted_to_hit_B_but_not_the_closure_of_A}, we have
	\begin{equation}
		\label{assumption:expected_size_fat_set}
		\E \left[ \left| \trace \left( \check{ \animals }_{A,B}^R \right) \right| \right] \geq 
		c_1(\graph,\lambda)  \cdot \vartheta(R) \cdot \big[ c_2(\graph) \cdot |A| - \left( | \extboundary A | - |B|  \right) \big]
	\end{equation}
	and 
	\begin{equation}
		\label{assumption:variance_size_fat_set}
		\Var \left( \left| \trace \left( \check{ \animals }_{A,B}^R \right) \right| \right) \leq
		C(R) \cdot \E \left[ \left| \trace \left( \check{ \animals }_{A,B}^R \right) \right| \right].
	\end{equation}
	
	Then the exploration process defined in \eqref{def:exploration_process_general_step} survives forever with positive probability:
	$$\prob \big( |E_n|>0 \text{ for all }n\ge 0 \big) > 0.$$
\end{proposition}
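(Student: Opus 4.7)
The plan is to apply Lemma~\ref{l.growth} to the process $Z_n := |E_n|$, so first I record the minor structural facts it needs. The increments equal $Y_{n+1} = |C_{n+1}|$, because the restricted process $\check{\animals}_{E_n,B_n}^R$ consists of animals that avoid $\overline{E_n} \setminus B_n \supseteq E_n$, forcing $C_{n+1} \cap E_n = \emptyset$. By Claim~\ref{claim:independence_of_the_next_step_of_the_exploration_from_the_past}, conditionally on $\Falg_n$ the restricted process is independent of the past once $E_n,B_n$ are frozen, so the hypotheses~\eqref{assumption:expected_size_fat_set}--\eqref{assumption:variance_size_fat_set} apply with $A = E_n$, $B = B_n$ to yield conditional moment bounds on $Y_{n+1}$ given $\Falg_n^\cE$.

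I will set $a := 2\degree / c_2(\graph)$ in~\eqref{e.cE}. For $n \geq 1$, on the event $\cE_0 \cap \cdots \cap \cE_n$, the computation~\eqref{eq:lower_bound_on_Bn_by_En_assuming_growth} gives $|\extboundary E_n| - |B_n| \leq \degree |E_{n-1}| \leq (c_2(\graph)/2)\cdot |E_n|$; for $n=0$ this quantity is identically zero by definition of $B_0$. Substituting into~\eqref{assumption:expected_size_fat_set},
\[
\E[Y_{n+1} \mid \Falg_n^\cE] \geq \tfrac{c_1(\graph,\lambda)\,c_2(\graph)}{2} \cdot \vartheta(R) \cdot Z_n.
\]
Since $\vartheta(R) \to \infty$, I fix $R = R(\graph,\lambda)$ large enough that the prefactor is at least $2a$, verifying~\eqref{e.EY}. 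The variance hypothesis~\eqref{e.VarY} of the lemma, with constant $C := C(R)$, is immediate from~\eqref{assumption:variance_size_fat_set}.

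For the base case $\prob(\cE_0) > 0$, I will use the unbounded support of $\nu_\origin$: for every $M$ there exists $H \in \latticeanimals_\origin$ with $|H| \geq M$ and $\nu_\origin(H) > 0$, and then the $\POI(\lambda \nu_\origin(H))$ number of copies of $(\origin,H)$ present in $\animals$ is positive with positive probability, an event on which $E_0 \supseteq H$ and so $|E_0| \geq M$. Choosing $M > 2C(R)$ produces a threshold $b$ satisfying both $\prob(Z_0 \geq b) > 0$ and the standing condition $C < b/2$ of Lemma~\ref{l.growth}. The lemma then yields $\prob(\bigcap_n \cE_n) > 0$, and on that event $Z_n \geq a^n b > 0$ for every $n$, which is exactly the claimed survival statement. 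The only conceptually delicate step is the geometric bookkeeping of the already-used boundary $\extboundary E_n \cap \extboundary E_{n-1}$; this is precisely why one cannot just track $\E[|E_n|]$ but must demand \emph{concentrated} exponential growth in the conditional sense of Lemma~\ref{l.growth}.
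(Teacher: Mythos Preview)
Your proof is correct and follows essentially the same route as the paper: both apply Lemma~\ref{l.growth} to $Z_n=|E_n|$, use~\eqref{eq:lower_bound_on_Bn_by_En_assuming_growth} on the event $\cE_0\cap\cdots\cap\cE_n$ to control $|\extboundary E_n|-|B_n|$, then choose $a$ proportional to $\degree/c_2(\graph)$ and $R$ large so that~\eqref{e.EY} holds, and finally invoke the unbounded support of $\nu_\origin$ to secure $b>2C(R)$ with $\prob(Z_0\ge b)>0$. One cosmetic point: your choice $a=2\degree/c_2(\graph)$ should carry a ``${}\vee 1$'' (or you should note that one may always shrink $c_2$ so that $c_2<2\degree$), since Lemma~\ref{l.growth} requires $a>1$; the paper handles this with an explicit $\vee 1$.
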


\begin{proof}	
Naturally, we want to use Lemma~\ref{l.growth} with $Z_n=|E_n|$ and $Y_n=|C_n|$ from the exploration process \eqref{def:exploration_process_general_step}. However, we will need to choose the constant $a$, which governs the exponential growth, with some care. Namely, assumption~\eqref{assumption:expected_size_fat_set} tells us that
\begin{align*}
 \E\big[|C_{n+1}| \,\big|\, \Falg_n^\cE \big] &\ge c_1(\graph,\lambda) \cdot \vartheta(R) \cdot \big[ c_2(\graph) \cdot |E_n| - \left( | \extboundary E_n | - |B_n|  \right) \big] \\
 & \overset{\eqref{eq:lower_bound_on_Bn_by_En_assuming_growth}}{\geq} c_1(\graph,\lambda) \cdot \vartheta(R) \cdot  \left( c_2(\graph) - \frac{\degree}{a} \right)  \cdot |E_n|.
  \end{align*}
So, in order to satisfy \eqref{e.EY}, choose
	\begin{equation}
		\label{eq:condition_on_parameters:a,R}
		a := \frac{\degree}{2 c_2(\graph)} \vee 1\,,
		\qquad \text{and then} \qquad
		\vartheta(R)  > \frac{4a}{ c_1(\graph,\lambda) \cdot c_2(\graph)}. 
	\end{equation}
This is possible, since $\lim_{R \to \infty} \vartheta(R) = \infty$.

For~\eqref{e.VarY}, we have $C=C(R)$ from assumption~\eqref{assumption:variance_size_fat_set}. The last bit we need is to be able to choose $b>2 \cdot C(R)$ such that $\prob[|E_0|>b]>0$ still holds. This is obviously ensured by $\nu_o$ having an unbounded support. 
\end{proof}

\begin{remark}
	One may find the right-hand side of~\eqref{assumption:expected_size_fat_set} a little weird --- possibly, a lower bound of the form $c_3(\graph,\lambda) \cdot \vartheta(R) \cdot |B|$ would be more natural. For instance, we would get something like this from  Corollary~\ref{coro:lower_bound_on_expected_size_of_fat_of_animals_through_one_vertex_on_free_product} for general animal measures on free products. However, in a nonamenable graph, for any $B \subseteq \extboundary A$ we have
	\begin{equation}
		\label{eq:silly_expression_for_the_size_of_B}
		|B| = |\extboundary A| - \left( |\extboundary A| - |B| \right) \geq \Cheeger(\graph) \cdot |A| - \left( | \extboundary A | - |B| \right),
	\end{equation}	
	hence the current assumption \eqref{assumption:expected_size_fat_set} actually covers this other form, as well.
	\end{remark}

\begin{remark}\label{r.BoR}
	Also note that, according to Lemma~\ref{lemma:upper_bound_on_the_second_moment_of_the_occupied_sets_size_for_worms}, the choice $c(R) := |\ball(\origin, R)|+1$ will always work in~\eqref{assumption:variance_size_fat_set}, hence this assumption is redundant.
	We chose to keep it to emphasize that some control is needed on the variance of the size of the explored set.
\end{remark}

At last, here is the proof of Theorem~\ref{thm:random_length_worms_model_on_nonamenable_graphs}, hence Theorem~\ref{t.worms}:

\begin{proof}[Proof of Theorem \ref{thm:random_length_worms_model_on_nonamenable_graphs}]
This reduces to a simple substitution into Proposition~\ref{p.growth}. For the constants in~\eqref{assumption:expected_size_fat_set}, given the constants in Corollary~\ref{coro:lower_bound_on_expected_size_of_fat_worms_on_nonamenable_graphs}, choose 
	\begin{equation*}
	c_1(\graph,\lambda):=\frac{ c(\graph) \cdot \lambda }{ \lambda \cdot m_1 + 1 }, 
	\qquad
	c_2(\graph) := \frac{(1 - \spectral(\graph))^2 (1 + \Cheeger(\graph))}{2},
	 \qquad
	\vartheta(R) := \E\left[ \mathcal{L}^2 \ind \left[ \mathcal{L} \leq R \right] \right].
	\end{equation*}
The last one works because $\lim_{R \to \infty} \vartheta(R) = \E \left[ \mathcal{L}^2 \right] = \infty$. Regarding~\eqref{assumption:variance_size_fat_set}, the choice $C(R) := |\ball(\origin, R)|+1$ works by Lemma~\ref{lemma:upper_bound_on_the_second_moment_of_the_occupied_sets_size_for_worms}.

Thus Proposition~\ref{p.growth} tells us that the cluster of $\origin$ is infinite with positive probability. By ergodicity~\eqref{obs:poisson_zoo_is_ergodic}, we get an infinite cluster almost surely.
\end{proof}

\section{An example of immediate uniqueness}\label{s.immuniq}

By the direct product $G\times H$ of two graphs we mean that 
$$V(G\times H):=\big\{ (x,u) : x\in V(G),\ u\in V(H)\big\},$$
and the adjacency relation defining $E(G\times H)$ is
$$(x,u) \sim (y,v) \qquad \textrm{if{f}} \qquad x=y \textrm{ and }u\sim v, \textrm{ or }x\sim y \textrm{ and }u=v\,.$$

\begin{proof}[Proof of Proposition~\ref{p.immuniq}]
Let us denote the origin of $G=\tree_\degree \times \Z^5$ by $o=(\rho,0)$. Every vertex $(x,u) \in V(\tree_\degree \times \Z^5)$ is contained in a subgraph $Z_x:= \{x\} \times \Z^5$. Let $\tilde\nu_0$ be a random length worm measure on $\Z^5$ with $\tilde\nu_0(\{0\})=p>0$ and $\E_{\tilde\nu_0} |H|<\infty$, but $\lambda_c(\tilde\nu)=0$, provided by Ráth and Rokob \cite{RathRokob2022}. Then let $\nu_o$ be the pushforward of $\tilde\nu_0$ by the obvious root-preserving graph isomorphism from $\Z^5$ to $Z_\rho$. The Poisson zoo $\mathcal{Q}_{\nu}^\lambda$ we are considering is then the one generated by $\nu_o$ and the transitive and unimodular automorphism group $\grp=\Aut(\tree_\degree)\times \Z^5$, with the group $\Z^5$ acting on its standard Cayley graph $\Z^5$ by translations.

Because of the singleton probability $p>0$ above, the Poisson zoo on $\Z^5$ for any $\lambda>0$ is insertion tolerant, and $\Z^5$ is amenable, hence the Burton-Keane argument \cite[Theorem 7.9]{LyonsPeres2016} tells us that there is a unique infinite cluster for any $\lambda>0$. So, almost surely, each $Z_x$ copy of $\Z^5$ has a unique infinite cluster $C_x$. Let $\theta=\theta(\lambda):=\prob \big( (x,u) \in C_x \big)>0$, independent of $u$ or $x$. 

For any $x,y\in V(\tree_\degree)$, consider
$$\mathcal{U}_{x,y}:=\big\{ u\in\Z^5 : \text{both } (x,u) \in C_x \text{ and }(y,u) \in C_y\big\}\,.$$
Since, for any $u\in \Z^5$, we have $\prob\left( u\in \mathcal{U}_{x,y}\right) = \theta^2$, Fatou's lemma gives that 
$$\prob \big( |\mathcal{U}_{x,y} | = \infty \big) \geq \theta^2 > 0,$$
and then, by the ergodicity of the Poisson zoo, this probability is 1. Now, there is a finite path $\gamma$ in $\tree_\degree$ between $x$ and $y$, and each vertex of $\gamma \times \{ u \}$, for any $u\in\Z^5$, is covered by a singleton in $\mathcal{Q}_{\nu}^\lambda$ with probability at least $p\lambda$, independently, so the entire path is open with probability at least $(p\lambda)^{|\gamma|}$. Thus, among the infinitely many $u \in \mathcal{U}_{x,y}$, there will be ones for which $\gamma \times \{ u \}$ is fully open, meaning that $C_x$ and $C_y$ are contained in the same cluster of $G$.

Therefore, for any $\lambda>0$, there is a(n obviously unique) infinite open cluster on $G$ such that its intersection with each $Z_x$, $x\in\tree_\degree$, contains the unique infinite open cluster within $Z_x$. In principle, there may exist other infinite clusters on $G$, as well, glued together from finite clusters within some of $Z_x$'s. However, the indistinguishability of infinite clusters, shown for insertion tolerant invariant percolations in \cite{LySch}, with a simpler proof in \cite{Damis} that also includes any Poisson zoo, excludes this possibility. That is, $\lambda_u(G,\nu)=0$.
\end{proof}

\end{document}
